
\documentclass[11pt]{amsart}

\usepackage{amsmath, amssymb}
\usepackage{amsfonts}
\usepackage{mathrsfs}
\usepackage[arrow,matrix,curve,cmtip,ps]{xy}
\usepackage{color}
\usepackage{todonotes}

\usepackage{float}

\usepackage{etex}
\usepackage{tikz}
\usetikzlibrary{arrows,positioning,scopes,calc,intersections}

\usepackage{amsthm}
\usepackage{mathtools} 
\usepackage{array}
\usepackage{mathdots}

\usepackage[top= 2.25cm, bottom=2cm, left = 2cm, right= 2cm]{geometry}
\usepackage{hyperref}
\usepackage{setspace}

\allowdisplaybreaks



\numberwithin{equation}{section}


\newtheorem{theorem}{Theorem}[section]
\newtheorem{lemma}[theorem]{Lemma}
\newtheorem{proposition}[theorem]{Proposition}
\newtheorem{corollary}[theorem]{Corollary}

\newtheorem*{theorem*}{Theorem}
\theoremstyle{remark}
\newtheorem{remark}[theorem]{Remark}


\numberwithin{equation}{section}































\newcommand{\q}{\psi}



\newcommand{\Pkt}[1]{\Pi_{#1}}


\newcommand{\cPkt}[1]{\bar{\Pi}_{#1}}


\renewcommand{\r}{\pi}



















\newcommand{\D}[1]{\widehat{#1}}

\newcommand{\Jac}{\text{Jac}}

\newcommand{\ul}{\underline{l}}

\newcommand{\ueta}{\underline{\eta}}




\begin{document}
\title[]
{Nonarchimedean components of non-endoscopic automorphic representations for quasisplit $Sp(N)$ and $O(N)$}

\author{Bin Xu}

\address{Yau Mathematical Sciences Center and Department of Mathematics \\  Tsinghua University, Beijing, China}
\email{binxu@tsinghua.edu.cn}

\subjclass[2010]{22E50 (primary); 11F70 (secondary)}
\keywords{symplectic and orthogonal group, Arthur packet, Jacquet functor}


\maketitle

\begin{abstract}
Arthur classified the discrete automorphic representations of symplectic and orthogonal groups over a number field by that of the general linear groups. In this classification, those that are not from endoscopic lifting correspond to pairs $(\phi, b)$, where $\phi$ is an irreducible unitary cuspidal automorphic representation of some general linear group and $b$ is an integer. In this paper, we study the local components of these automorphic representations at a nonarchimedean place, and we give a complete description of them in terms of their Langlands parameters.
\end{abstract}

\section{Introduction}

Let $G$ be a split symplectic or special odd orthogonal group over a number field $k$. Arthur \cite{Arthur:2013} proved the automorphic representations of $G(\mathbb{A}_{k})$ can be parametrized by the global Arthur parameters, which are isobaric sums
\[
\psi = \boxplus_{i} (\phi_{i} \boxtimes \nu_{b_{i}}),
\]
where $\phi_{i}$ is certain irreducible unitary cuspidal automorphic representation of a general linear group and $\nu_{b_{i}}$ is the $(b_{i} - 1)$-th symmetric power representation of $SL(2, \mathbb{C})$. For any such $\psi$, Arthur attached a global Arthur packet $\Pkt{\q}$, which is a multi-set of isomorphism classes of irreducible admissible representations of $G(\mathbb{A}_{k})$. This packet admits a restricted tensor product decomposition
\[
\Pkt{\q} := \otimes_{v} \Pkt{\q_{v}}
\]
where we denote by $\q_{v}$ the local component of $\q$ at each place $v$, and $\Pkt{\q_{v}}$ is a multi-set of isomorphism classes of irreducible admissible representations of $G(k_{v})$, called local Arthur packet. By the local Langlands correspondence for general linear groups \cite{HarrisTaylor:2001} \cite{Henniart:2000} \cite{Scholze:2013} \cite{Langlands:1989}, we can associate $\phi_{i, v}$ with a representation of the Weil-Deligne group $WD_{k_{v}} := W_{k_{v}} \times SL(2, \mathbb{C})$ at the nonarchimedean places (resp. $W_{k_{v}}$ at the archimedean places), which will still be denoted by $\phi_{i, v}$. Then $\q_{v}$ can be viewed as a representation of $WD_{k_{v}} \times SL(2, \mathbb{C})$ at the nonarchimedean places (resp. $W_{k_{v}} \times SL(2, \mathbb{C})$ at the archimedean places). In particular, Arthur showed that it factors through the Langlands dual group of $G(k_{v})$. We will call $\q_{v}$ a local Arthur parameter for $G(k_{v})$. In this paper, we would like to describe the Langlands parameters of the elements inside $\Pkt{\q_{v}}$, when $\q$ consists of a single term, i.e., 
\begin{align}
\label{eq: single}
\q = \phi \boxtimes \nu_{b}
\end{align}
and $v$ is a nonarchimedean place. It follows from Arthur's theory \cite{Arthur:2013} that the representations in such $\Pkt{\q}$ do not come from endoscopic lifting, so this justifies our title.

From now on, we will let $G$ be a split symplectic or special odd orthogonal group over a $p$-adic field $F$. Let $\D{G}$ be the complex dual group of $G$. We recall an Arthur parameter for $G(F)$ is a $\D{G}$-conjugacy class of admissible homomorphisms
\[
\psi: W_{F} \times SL(2, \mathbb{C}) \times SL(2, \mathbb{C}) \rightarrow \D{G}
\]
with the property that $\psi(W_{F})$ is bounded. By composing with the standard representation of $\D{G}$, we can view $\psi$ as a representation of $W_{F} \times SL(2, \mathbb{C}) \times SL(2, \mathbb{C})$. It decomposes as 
\begin{align}
\label{eq: local A-parameter}
\psi = \oplus_{i = 1}^{n} \, \rho_{i} \otimes \nu_{a_{i}} \otimes \nu_{b_{i}}
\end{align}
where $\rho_{i}$ is an irreducible unitary representation of $W_{F}$ and $a_{i}, b_{i} \in \mathbb{Z}$. To describe the associated packet $\Pkt{\q}$, we will take $\rho_{i}$ to be the corresponding irreducible supercuspidal representation of $GL(d_{\rho_{i}}, F)$ through the local Langlands correspondence. Then we can construct a self-dual representation of $GL(N, F)$ by
\[
\pi^{GL}_{\q} := \times_{i = 1}^{n} \, Sp(St(\rho_{i}, a_{i}), b_{i}),
\]
which is an induction of the Speh representations. Recall the Steinberg representation $St(\rho_{i}, a_{i})$ is the unique irreducible subrepresentation of the induction
\[
\rho_{i}||^{(a_{i} - 1)/2} \times \rho_{i}||^{(a_{i} - 3)/2} \cdots \times \rho_{i}||^{-(a_{i} - 1)/2}
\]
and the Speh representation $Sp(St(\rho_{i}, a_{i}), b_{i})$ is the unique irreducible subrepresentation of 
\begin{align}
\label{eq: Speh}
St(\rho_{i}, a_{i})||^{-(b_{i} - 1)/2} \times St(\rho_{i}, a_{i})||^{-(b_{i} - 3)/2} \cdots \times St(\rho_{i}, a_{i})||^{(b_{i} - 1)/2}
\end{align}
We will also denote the Steinberg representation by 
\[
\langle (a_{i} - 1)/2, \cdots, -(a_{i} - 1)/2\rangle
\] 
and the Speh representation by a matrix 
\begin{align}
\label{eq: matrix}
\begin{pmatrix}
              (a_{i} - b_{i})/2 & \cdots & 1 - (a_{i} + b_{i})/2 \\
              \vdots &  & \vdots \\
              (a_{i} + b_{i})/2 - 1 & \cdots & -(a_{i} - b_{i})/2
\end{pmatrix}
\end{align}
where each row corresponds to the exponents of the shifted Steinberg representations in \eqref{eq: Speh}. Since $\pi^{GL}_{\q}$ is self-dual, one can consider its twisted character. Arthur \cite{Arthur:2013} proved that there exists a stable finite linear combination of characters on $G(F)$, whose twisted endoscopic transfer is this twisted character. By the linear independence of characters, this determines $\Pkt{\q}$ as a finite subset of isomorphism classes of irreducible admissible representations of $G(F)$. (Moeglin \cite{Moeglin1:2011} proved the Arthur packet is always multiplicity free in this case.) However, this does not tell us explicitly which representations are contained in it. To answer this question, we need a parametrization of the set ${\rm Irr}(G(F))$ of isomorphism classes of irreducible admissible representations of $G(F)$. This is given by the local Langlands correspondence for $G(F)$. Arthur \cite{Arthur:2013} proved that there is a canonical bijection (after fixing a Whittaker datum)  
\[
{\rm Irr}(G(F)) \cong \{(\phi, \epsilon) | \phi \in \Phi(G(F)), \epsilon \in {\rm Irr}(\mathcal{S}_{\phi})\},
\]
where $\Phi(G(F))$ is the set of Langlands parameters, which are $\D{G}$-conjugacy classes of admissible homomorphisms 
\[
\phi: W_{F} \times SL(2, \mathbb{C}) \rightarrow \D{G},
\]
and
\[
\mathcal{S}_{\phi} := \pi_{0}(Z_{\D{G}}(\phi)/Z(\D{G}) ),
\]
where $Z_{\D{G}}(\phi)$ is the stabilizer of $\phi$ in $\D{G}$ and $Z(\D{G})$ is the center of $\D{G}$. We will call the pair $(\phi, \epsilon)$ complete Langlands parameter of $G(F)$, and denote the corresponding representation by $\pi(\phi, \epsilon)$.

Back to the Arthur parameter \eqref{eq: local A-parameter}, let us write $A_{i} = (a_{i} + b_{i}) / 2 -1$, $B_{i} = |a_{i} - b_{i}|/2$ and $\zeta_{i} = {\rm sgn}(a_{i} - b_{i})$. When $a_{i} = b_{i}$, we may choose $\zeta_{i}$ arbitrarily. We will call $(\rho, a_{i}, b_{i})$ or $(\rho, A_{i}, B_{i}, \zeta_{i})$ Jordan blocks, and denote the set of Jordan blocks by $Jord(\q)$. For simplicity, we will assume that 
\begin{align}
\label{eq: fix rho}
\text{ $\rho_{i} = \rho$ for some fixed $\rho$, and $(\rho, a_{i}, b_{i})$ all have the same parity as $\D{G}$. }
\end{align}
Since we want to study the local component of a global Arthur parameter of the type \eqref{eq: single}, we can assume all $b_{i}$ are equal and denote it by $b$. So we may rewrite \eqref{eq: local A-parameter} as
\begin{align}
\label{eq: fix b}
\psi = \oplus_{i = 1}^{n} \, \rho \otimes \nu_{a_{i}} \otimes \nu_{b}
\end{align}
Under our assumptions, all $a_{i}$ will have the same parity. The simplest case is when $\psi$ consists of a single term, i.e.,
\[
\psi = \rho \otimes \nu_{a} \otimes \nu_{b}
\]
and $a \geqslant b$. In this case, we have the following result due to M{\oe}glin \cite[Theorem 4.2]{Moeglin:2009}. Firstly, there is a bijection 
\[
\Pkt{\q} \rightarrow \{(l, \eta) \in \mathbb{Z} \times \{\pm 1\} \, | \, 0 \leqslant l \leqslant [(A - B + 1)/2] \text{ and } \epsilon_{l, \eta} = 1\} / \sim
\]
where
\begin{align}
\label{eq: sign}
\epsilon_{l, \eta} := \eta^{A - B + 1}(-1)^{[(A - B + 1)/2] + l}
\end{align}
and the equivalence relation $\sim$ only identifies those $(l, \eta)$ and $(l', \eta')$ for $l = l' = (A - B + 1)/2$. Secondly, the representation $\pi(\psi, l, \eta)$ parametrized by $(l, \eta)$ satisfies
\begin{align}
\label{eq: Moeglin}
\pi(\psi, l, \eta) \hookrightarrow \begin{pmatrix}
              B & \cdots & -A \\
              \vdots &  & \vdots \\
              B + l - 1 & \cdots & -(A - l + 1) \end{pmatrix} \rtimes \pi(\phi', \epsilon')
\end{align}
as the unique irreducible subrepresentation. Here the matrix represents a shifted Speh representation (cf. \eqref{eq: matrix}) and $\pi(\phi', \epsilon')$ is a discrete series representation of $G'(F)$, which is of the same type as $G(F)$. The complete Langlands parameter $\phi_{l}$ of the shifted Speh representation factors through that of the inducing representation (cf. \eqref{eq: Speh}), i.e., 
\[
\phi_{l} = \oplus_{i = 0}^{l-1} \, (\rho||^{i - (A-B)/2} \otimes \nu_{A + B+1})
\]
and
\[
\phi' = \oplus_{C = B + l}^{A-l} \, \rho \otimes \nu_{2C+1}.
\]
We can view $\phi'$ as an Arthur parameter, where the second $SL(2, \mathbb{C})$ maps trivially. Then its Jordan blocks are $(\rho, C, C, +)$ for $B+l \leqslant C \leqslant A-l$. The character $\epsilon'$ can be represented by a sign function over this set of Jordan blocks. In this way, we have
\[
\epsilon'(\rho, C, C, +) = (-1)^{C - (B + l)}\eta.
\]
The sign condition $\epsilon_{l, \eta} = 1$ guarantees that
\[
\prod_{C = B + l}^{A - l} \epsilon'(\rho, C, C, +) = 1,
\]
which is the necessary condition for $\epsilon'$ to define a character of $\mathcal{S}_{\phi'}$.
One can also describe the complete Langlands parameter $(\phi, \epsilon)$ for $\pi(\psi, l, \eta)$ from the embedding \eqref{eq: Moeglin}. Indeed, 
\[
\phi = \phi_{l} \oplus \phi' \oplus \phi_{l}^{\vee}.
\]
Moreover, $\epsilon$ corresponds to $\epsilon'$ under the natural isomorphisms
\(
\mathcal{S}_{\phi} \cong \mathcal{S}_{\phi'}.
\)
By M{\oe}glin's result, one can also view $\pi(\phi', \epsilon')$ as an element in $\Pkt{\q'}$, where $\q'$ is the Arthur parameter of $G'(F)$ consisting of only one Jordan block $(\rho, A-l, B+l, +)$. In particular, 
\begin{align}
\label{eq: discrete series in A-packet}
\pi(\phi', \epsilon') = \pi(\psi', l', \eta') 
\end{align}
for $l' = 0$ and $\eta' = \epsilon'(\rho, B+l, B+l, +)$. To save notations, we will write
\[
\pi(\psi', l', \eta') = \pi((\rho, A-l, B+l, 0, \eta', +)).
\]
In general, we can divide the Jordan blocks in \eqref{eq: fix b} into two classes.
\begin{itemize}

\item $a_{i} \geqslant b$, i.e., $\zeta_{i} = +$: $A_{i} - B_{i} = b -1, A_{i} + B_{i} = a_{i} - 1$. So all intervals $[B_{i}, A_{i}]$ have the same length and are centered beyond $(b-1)/2$.

\item $a_{i} < b$, i.e., $\zeta_{i} = -$: $A_{i} - B_{i} = a_{i} - 1, A_{i} + B_{i} = b - 1$. So all intervals are centered at $(b-1)/2$.

\end{itemize}
We reorder the Jordan blocks such that $A_{i} \geqslant A_{i-1}$. Then there exists an integer $m$ such that $(\rho, A_{i}, B_{i}, \zeta_{i})$ is in the first class if $i > m$ and the second class if $i \leqslant m$. Now we can state our main results.

\subsection{Reductions}

\begin{theorem}
\label{thm: push integral}

Suppose $A_{i}, B_{i} \in \mathbb{Z}$. Let $\q'$ be obtained from $\q$ by replacing all $(\rho, A_{i}, B_{i}, \zeta_{i})$ by $(\rho, A'_{i}, B'_{i}, \zeta'_{i})$ such that: $\zeta'_{i} = +$ and 
\begin{itemize}

\item $A'_{i} = A_{i}, B'_{i} = B_{i}$ for $i > m$;

\item $A'_{i} = A_{i} - B_{i}, B'_{i} = 0$ for $i \leqslant m$.

\end{itemize}
Then there is a bijection 
\[
\Pkt{\q} \rightarrow \Pkt{\q'}, \quad \pi \mapsto \pi'
\]
such that any representation $\pi \in \Pkt{\q}$ is given as the unique irreducible subrepresentation of 
\[
\pi \hookrightarrow \times_{i \leqslant m} \begin{pmatrix}
              -B_{i} & \cdots & -A_{i} \\
              \vdots &  & \vdots \\
              - 1 & \cdots & -(A_{i} - B_{i} +1)
       \end{pmatrix} \rtimes \pi'
\]
for the corresponding $\pi' \in \Pkt{\q'}$. Moreover, if $(\phi', \epsilon')$ is the complete Langlands parameter of $\pi'$, then the complete Langlands parameter $(\phi, \epsilon)$ of $\pi$ is given as follows: 
\[
\phi = (\oplus_{i \leqslant m}  \phi_{i}) \oplus \phi' \oplus (\oplus_{i \leqslant m}  \phi_{i}^{\vee}),
\]
where $\phi_{i}$ is the Langlands parameter of the corresponding shifted Speh representation and $\epsilon$ corresponds to $\epsilon'$ under the natural isomorphism
\(
\mathcal{S}_{\phi} \cong \mathcal{S}_{\phi'}.
\)

\end{theorem}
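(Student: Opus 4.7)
The plan is to argue by induction on $m$, where the inductive step is a one-block reduction: given an index $i_{0} \leqslant m$, if $\q^{(1)}$ is obtained from $\q$ by replacing only the block $(\rho, A_{i_{0}}, B_{i_{0}}, -)$ with $(\rho, A_{i_{0}} - B_{i_{0}}, 0, +)$, then a bijection $\Pkt{\q} \rightarrow \Pkt{\q^{(1)}}$ is realized by the embedding as unique irreducible subrepresentation of the parabolic induction of $\pi^{(1)}$ tensored with the single shifted Speh factor indexed by $i_{0}$. The full statement then follows by iterating this reduction for $i_{0} = 1, \ldots, m$ and checking that the successive shifted Speh inductions commute, which is a standard consequence of Zelevinsky-type intertwining arguments since the segments appearing in the Speh factors for distinct $i$ are unlinked in the relevant sense.

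For the one-block reduction, the strategy is to invoke M{\oe}glin's algorithmic construction of $\Pkt{\q}$. M{\oe}glin parametrizes $\Pkt{\q}$ by pairs of functions $(\ul, \ueta)$ on $Jord(\q)$ and describes each $\pi(\q, \ul, \ueta)$ as the unique irreducible subrepresentation of a standard module obtained by peeling off shifted Speh factors row by row, starting from a discrete series in a smaller Arthur packet. Concretely, I would compute the partial Jacquet module $\Jac_{x_{1}, x_{2}, \ldots}\, \pi$ along the exponents reading off the successive rows of the matrix in the statement, using the M{\oe}glin--Tadi{\'c} derivative formulas, and verify that the result is the shifted Speh representation tensored with a unique irreducible representation $\pi^{(1)}$, which one then identifies with the element of $\Pkt{\q^{(1)}}$ corresponding to $(\ul', \ueta')$, where $\ul'$, $\ueta'$ are the restrictions of $\ul, \ueta$ to $Jord(\q^{(1)})$. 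Because every exponent appearing in the matrix is negative, Frobenius reciprocity combined with Langlands' subrepresentation theorem guarantees that the embedding is as the unique irreducible subrepresentation.

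The complete Langlands parameter computation then follows from the standard module picture. Since each shifted Speh factor $\phi_{i}$ has strictly negative central exponents, the full Langlands parameter decomposes as $\phi = (\oplus_{i \leqslant m} \phi_{i}) \oplus \phi^{(1)} \oplus (\oplus_{i \leqslant m} \phi_{i}^{\vee})$, and the pairing of $\phi_{i}$ with its dual $\phi_{i}^{\vee}$ contributes only a $GL$-factor to $Z_{\D{G}}(\phi)$ which does not alter the component group, yielding the canonical isomorphism $\S{\phi} \cong \S{\phi^{(1)}}$; the character $\epsilon$ corresponds to $\epsilon^{(1)}$ via this isomorphism, and this correspondence can be verified on the natural generators. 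The main obstacle is the Jacquet module computation in the preceding paragraph: one must carefully track how M{\oe}glin's combinatorial data $(\ul, \ueta)$ transforms under the replacement $(\rho, A, B, -) \leftrightarrow (\rho, A-B, 0, +)$ and confirm that the derivative procedure yields exactly the prescribed shifted Speh representation rather than a proper subquotient, which requires the full force of the M{\oe}glin--Tadi{\'c} Jacquet module calculus applied uniformly across all pairs $(\ul, \ueta)$ parametrizing $\Pkt{\q}$.
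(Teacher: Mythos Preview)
Your inductive scheme --- fully reducing one $-$-block at a time --- does not close. Once you replace $(\rho, A_{i_{0}}, B_{i_{0}}, -)$ by $(\rho, A_{i_{0}} - B_{i_{0}}, 0, +)$, the parameter $\psi^{(1)}$ acquires a $+$-block whose $A$-value $A_{i_{0}} - B_{i_{0}}$ is in general strictly smaller than $A_{j}$ for the remaining $-$-blocks $j$. So $\psi^{(1)}$ is no longer of the shape in Theorem~\ref{thm: push integral}, where all $+$-blocks sit above all $-$-blocks in the $A$-ordering, and your inductive hypothesis does not apply to it. If instead you try to state the one-block reduction for arbitrary configurations (so that the iteration would close), it is simply false: the Remark following Lemma~\ref{lemma: step three} gives an explicit two-block example with $|\Pkt{\psi}| > |\Pkt{\psi'}|$ once a $+$-block has smaller $A$ than a $-$-block, so no bijection of the desired type can exist in that generality. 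Your Jacquet-module outline provides no mechanism for avoiding such intermediate configurations, and the claim that the data $(\ul,\ueta)$ simply ``restrict'' to $Jord(\psi^{(1)})$ ignores the fact (noted at the end of Section~\ref{sec: review}) that the sign $\zeta$ genuinely affects the parametrization even at $B=0$.

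The paper's argument is organized differently. It first generalizes the hypotheses slightly (beginning of Section~\ref{sec: step three}) and then inducts on $\sum_{i \leqslant m} B_{i}$: at each step Lemma~\ref{lemma: step three} selects a maximal range $l \leqslant i \leqslant s$ of $-$-blocks sharing the same $(A,B)$ and shifts each of them down by exactly one, so that the constraint $A_{j} \geqslant A_{i}$ for $\zeta_{j} = +$, $\zeta_{i} = -$ is preserved throughout the induction. The unique-subrepresentation claim is then obtained not from negativity of the Speh exponents alone, but by verifying that the accumulated induced representation embeds in the costandard module; this verification (cases (1) and (2) in the proof of Theorem~\ref{thm: step three integral}) relies essentially on the description of $\pi'$ already furnished by Theorems~\ref{thm: step one} and~\ref{thm: step two}, which your sketch does not invoke.
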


\begin{theorem}
\label{thm: push half-integral}
Suppose $A_{i}, B_{i} \notin \mathbb{Z}$. We consider the maximal sequence of integers 
\[
0 = s_{0} < s_{1} < \cdots < s_{l} = m
\]
such that $A_{s_{j}} - B_{s_{j}} \neq A_{s_{j} + 1} - B_{s_{j} + 1}$. For any $0 \leqslant k \leqslant l$, we get a new parameter $\q'_{k}$ by replacing all $(\rho, A_{i}, B_{i}, \zeta_{i})$ by $(\rho, A'_{i}, B'_{i}, \zeta'_{i})$ such that: $\zeta'_{i} = +$ and
\begin{itemize}

\item $A'_{i} = A_{i}, B'_{i} = B_{i}$ for $i > m$;

\item $A'_{i} = A_{i} - B_{i} -1/2, B'_{i} = 1/2$ for $i \leqslant m$ and $i \neq s_{k}$;

\item $A'_{i} = A_{i} - B_{i} + 1/2, B'_{i} = 1/2$ for $i = s_{k}$.

\end{itemize}
Then we can divide $\Pkt{\q}$ into $l + 1$ classes, i.e.,
\[
\Pkt{\q} = \sqcup_{k = 0}^{l} \, \Pkt{\q}(k),
\]
and for any $0 \leqslant k \leqslant l$, we can get an injection 
\[
\Pkt{\q}(k) \hookrightarrow \Pkt{\q'_{k}}, \quad \pi \mapsto \pi(\q'_{k}, \ul', \ueta'),
\]
such that
\begin{align*}
\pi \hookrightarrow \times_{s_{k} \neq i \leqslant m} \begin{pmatrix}
              -B_{i} & \cdots & -A_{i}\\
              \vdots &  & \vdots \\
              -1/2  & \cdots & -(A_{i} - B_{i} +1/2) \end{pmatrix} \times \begin{pmatrix}
              -B_{s_{k}} & \cdots & -A_{s_{k}} \\
              \vdots &  & \vdots \\
              -3/2 & \cdots & -(A_{s_{k}} - B_{s_{k}} +3/2)
       \end{pmatrix} \rtimes \pi(\q'_{k}, \ul', \ueta')
\end{align*}
as the unique irreducible subrepresentation. Here we have parametrized the elements of $\Pkt{\q'_{k}}$ by $(\ul', \ueta')$ as explained in Section \ref{subsec: special case} below. The image is characterized by the condition that for all $i \leqslant s_{k}$,
\begin{itemize}

\item $l'_{i} = 0$;

\item $\eta'_{i} = - \prod_{j < i} (-1)^{A_{j} - B_{j} + 1}$.

\end{itemize}
When $k \neq 0$, the second condition can also be simplified as $\eta'_{1} = -1$. Moreover, if $(\phi', \epsilon')$ is the complete Langlands parameter of $\pi(\q'_{k}, \ul', \ueta')$, then the complete Langlands parameter $(\phi, \epsilon)$ of $\pi$ is given as follows: 
\[
\phi = (\oplus_{i \leqslant m}  \phi_{i}) \oplus \phi' \oplus (\oplus_{i \leqslant m}  \phi_{i}^{\vee}),
\]
where $\phi_{i}$ is the Langlands parameter of the corresponding shifted Speh representation and $\epsilon$ corresponds to $\epsilon'$ under the natural isomorphism
\(
\mathcal{S}_{\phi} \cong \mathcal{S}_{\phi'}.
\)

\end{theorem}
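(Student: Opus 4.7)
The plan is to mimic the reduction strategy of Theorem \ref{thm: push integral}, now adapted to the half-integral case, where the essential new feature is that a Jordan block $(\rho, A_{i}, B_{i}, -)$ with $B_{i} \in \frac{1}{2} + \mathbb{Z}_{\geq 0}$ can only be pushed down as far as $B_{i} = \frac{1}{2}$, rather than all the way to $0$. This half-step obstruction is what forces a partition of $\Pkt{\q}$ into the $l+1$ disjoint classes $\Pkt{\q}(k)$, indexed by the choice of which block in each maximal run of equal $A_{i} - B_{i}$ values is kept one step higher than the others.

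First, I would establish a single-step reduction lemma: if $(\rho, A, B, -)$ is a Jordan block of $\q$ with $B > \frac{1}{2}$, then replacing it by $(\rho, A-1, B-1, -)$ yields a new parameter $\q''$ and a bijection $\Pkt{\q} \to \Pkt{\q''}$, $\pi \mapsto \pi''$, under which $\pi$ is realized as the unique irreducible subrepresentation of an induction whose inducing data is one shifted Steinberg row $\langle -B, \ldots, -A\rangle$ times $\pi''$. This is the half-integral analogue of the elementary step implicit in Theorem \ref{thm: push integral}, and the proof proceeds via partial Jacquet module computations starting from M{\oe}glin's explicit realization \eqref{eq: Moeglin} and tracking how her parameters $(l, \eta)$ change.

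Iterating this lemma, I would push each $(\rho, A_{i}, B_{i}, -)$ for $i \leqslant m$ down as far as possible. Unlike the integral case, the reduction cannot reach $B'_{i} = 0$, and within a run of blocks sharing the same value of $A_{i} - B_{i}$ exactly one block must be left at the intermediate position $(\rho, A_{i} - B_{i} + \frac{1}{2}, \frac{1}{2}, +)$ while the others are pushed to $(\rho, A_{i} - B_{i} - \frac{1}{2}, \frac{1}{2}, +)$; otherwise the sign condition \eqref{eq: sign} is violated. Running through the allowed choices of the privileged block, one for each jump index $s_{k}$, produces the partition $\Pkt{\q} = \sqcup_{k} \Pkt{\q}(k)$ and the corresponding injections into $\Pkt{\q'_{k}}$.

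The hardest part is pinning down the image: showing that $\pi \in \Pkt{\q}(k)$ corresponds to $\pi(\q'_{k}, \ul', \ueta')$ precisely when $l'_{i} = 0$ and $\eta'_{i} = -\prod_{j < i}(-1)^{A_{j} - B_{j} + 1}$ for all $i \leqslant s_{k}$. This requires an induction on the total number of single-step pushes, carefully tracking how M{\oe}glin's character $\eta$ transforms at each application of the lemma; the alternating sign comes from the parity of $A_{j} - B_{j} + 1$ accumulated across the successive Jordan blocks to the left, while the simplification $\eta'_{1} = -1$ for $k \neq 0$ reflects that the leftmost block has been pushed past an odd number of such sign flips. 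Once the bijection and its image are controlled, the description of the complete Langlands parameter $(\phi, \epsilon)$ follows by unwinding the Langlands parameter of each shifted Speh factor and invoking the naturality of the identification $\S{\phi} \cong \S{\phi'}$.
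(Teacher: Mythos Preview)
Your single-step reduction lemma is correct and corresponds to Lemma~\ref{lemma: step three} in the paper; iterating it does push every block $(\rho, A_i, B_i, -)$ with $i \le m$ down to $B_i = 1/2$, and this is indeed a bijection on packets. The genuine gap is that you have conflated two distinct operations. The push-down leaves $\zeta_i = -$, whereas the target parameters $\psi'_k$ have $\zeta'_i = +$; in the integral case this was harmless because at $B = 0$ the sign can be flipped by a straightforward reparametrization, but at $B = 1/2$ it cannot. The passage from $\zeta_i = -$ to $\zeta'_i = +$ at $B_i = 1/2$ is a separate, nontrivial ``change sign'' step (Section~\ref{subsubsection: change sign}), and it is \emph{this} step, not the push-down, that produces the partition $\Pi_\psi = \sqcup_k \Pi_\psi(k)$.

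Concretely, the paper's sign-change analysis (Lemma~\ref{lemma: change sign} and the lemma following it, both resting on Proposition~\ref{prop: change sign}) splits according to whether $l_m = 0$ and $\eta_m = -1$; iterating this dichotomy over successive maximal runs of equal $A_i$ determines which index $s_k$ keeps its block unshrunk and yields the explicit conditions $l'_i = 0$, $\eta'_i = -\prod_{j<i}(-1)^{A_j - B_j + 1}$ for $i \le s_k$. Your explanation that ``one block must be left at the intermediate position \ldots\ otherwise the sign condition \eqref{eq: sign} is violated'' misidentifies the mechanism: nothing in the push-down forces a block to stop early, and the partition is not a nonvanishing constraint but a case distinction arising in the sign-change step. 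Without that step and its supporting proposition, your argument never actually reaches the parameters $\psi'_k$.
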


\subsection{A special case}
\label{subsec: special case}

The previous two theorems reduce our problem to the following special case (cf. $\q'$, $\q'_{k}$):
\[
\q = \oplus_{i = 1}^{n} (\rho \otimes \nu_{a_{i}} \otimes \nu_{b_{i}}),
\]
where $A_{i} \geqslant A_{i-1}, B_{i} \geqslant B_{i-1}$ and $\zeta_{i} = +$. In this case, we have the following result.

\begin{theorem}
\label{thm: special case}

Suppose we are in the special case described above.

\begin{enumerate}

\item There is a bijection
\[
\Pkt{\q} \rightarrow \{(\ul, \ueta) \in \mathbb{Z}^{n} \times \{\pm 1\}^{n} \, | \, 0 \leqslant l_{i} \leqslant [(A_{i} - B_{i} +1)/2],
\text{ \eqref{eq: nonvanishing 1} and \eqref{eq: sign condition} are satisfied }\} / \sim
\]
where $\ul = (l_{i})$, $\ueta = (\eta_{i})$, and
\begin{align}
\label{eq: nonvanishing 1}
\begin{cases}
\eta_{i+1} = (-1)^{A_{i} - B_{i}} \eta_{i}  & \Rightarrow A_{i+1} - l_{i+1} \geqslant A_{i} - l_{i}, \quad B_{i+1} + l_{i+1} \geqslant B_{i} + l_{i},  \\
\eta_{i+1} \neq (-1)^{A_{i} - B_{i}}\eta_{i}  & \Rightarrow B_{i+1} + l_{i+1} > A_{i} - l_{i}.   
\end{cases}
\end{align}
and
\begin{align}
\label{eq: sign condition}
\prod_{i = 1}^{n} \epsilon_{l_{i}, \eta_{i}} = 1
\end{align}
where $\epsilon_{l_{i}, \eta_{i}}$ is defined as in \eqref{eq: sign}. We have identified $(\ul, \ueta) \sim (\ul', \ueta')$, whenever 
\[
\begin{cases}
\ul  = \ul' \\
\eta_{i}  = \eta'_{i} \text{ unless } l_{i} = (A - B + 1)/2 
\end{cases}
\]
\item Let $\pi(\q, \ul, \ueta)$ be the representation parametrized by $(\ul, \ueta)$. Consider the maximal sequence of integers 
\[
0 = k_{0} < \cdots < k_{r} = n
\]
such that $A_{k_{j}} - l_{k_{j}}< B_{k_{j} + 1} + l_{k_{j} + 1}$. When $A_{i} - l_{i} \geqslant B_{i+1} + l_{i+1}$, we take
\[
t_{i} = \frac{(A_{i}  - l_{i}) + (B_{i+1} + l_{i+1})}{2}
\] 
and 
\[
\delta_{i} = \begin{cases}
1 & \text{ if } \, t_{i} - A_{i} \in \mathbb{Z} \\
1/2 & \text{ if } \, t_{i} - A_{i} \notin \mathbb{Z}
\end{cases}
\]
Then we have
\begin{align*}
\pi(\q, \ul, \ueta) & \hookrightarrow \times_{i = 1}^{n} \underbrace{\begin{pmatrix}
              B_{i} & \cdots & -A_{i} \\
              \vdots &  & \vdots \\
              B_{i} + l_{i} - 1 & \cdots & -(A_{i} - l_{i} + 1) \end{pmatrix}}_{I_{i}} \\
& \times_{i = 1}^{r} \times_{k_{i-1} < j < k_{i}} 
\underbrace{\begin{pmatrix}
              B_{j+1} + l_{j+1} & \cdots & -(A_{j} - l_{j}) \\
              \vdots &  & \vdots \\
              t_{j} - \delta_{j} & \cdots & -(t_{j} + \delta_{j}) 
\end{pmatrix}}_{\tilde{I}_{j}}
\rtimes \pi'
\end{align*}
as the unique irreducible subrepresentation, where  
\[
\pi'  = \pi\Big(\cup_{i} \Big\{ \cdots \Big\} \Big)
\] 
with
\[
\Big\{ \cdots \Big\}  = \Big\{ (\rho, A_{k_{i}} - l_{k_{i}}, B_{k_{i}} + l_{k_{i}}, 0, \eta_{k_{i}}, +) \Big\}
\] 
if $k_{i} - k_{i-1} = 1$, and  
\begin{align*}
\Big\{ \cdots \Big\} & = \Bigg\{ \Big(\rho, A_{k_{i}} - l_{k_{i}}, t_{k_{i} - 1} - \delta_{k_{i} - 1} + 1, 0, (-1)^{t_{k_{i} - 1} - \delta_{k_{i} - 1} + 1 - (B_{k_{i}} + l_{k_{i}}) }\eta_{k_{i}}, + \Big), \\
&  \cup_{k_{i -1} + 1 < j < k_{i}} \Big(\rho, t_{j} + \delta_{j} - 1, t_{j-1} - \delta_{j-1} + 1, 0, (-1)^{t_{j-1} - \delta_{j-1} + 1 - (B_{j} + l_{j})}\eta_{j}, + \Big), \\
& \Big(\rho, t_{k_{i-1}+1} + \delta_{k_{i-1} + 1} - 1, B_{k_{i-1}+1} + l_{k_{i-1} + 1}, 0 , \eta_{k_{i-1} + 1}, + \Big) \Bigg\}
\end{align*} 
otherwise. Here $\pi'$ is a tempered representation of a group $G'(F)$ of the same type as $G(F)$, and its complete Langlands parameter $(\phi', \epsilon')$ can be described as in \eqref{eq: discrete series in A-packet}. Moreover, the complete Langlands parameter $(\phi, \epsilon)$ of $\pi(\q, \ul, \ueta)$ is given as follows: 
\[
\phi = (\oplus_{j}  \tilde{\phi}_{j}) \oplus (\oplus_{i}  \phi_{i}) \oplus \phi' \oplus (\oplus_{i}  \phi_{i}^{\vee}) \oplus (\oplus_{j}  \tilde{\phi}^{\vee}_{j})
\]
where $\phi_{i}$ (resp. $\tilde{\phi}_{j}$) is the Langlands parameter of the corresponding shifted Speh representation $I_{i}$ (resp. $\tilde{I}_{j}$) and $\epsilon$ corresponds to $\epsilon'$ under the natural isomorphism
\(
\mathcal{S}_{\phi} \cong \mathcal{S}_{\phi'}.
\)
\end{enumerate}

\end{theorem}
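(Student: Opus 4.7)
The plan is to prove both parts by induction on the total length $\sum_{i} (A_{i} - B_{i} + 1)$ of the Jordan blocks, using M\oe{}glin's single-Jordan-block theorem \cite{Moeglin:2009} (recalled above) as the base case. The main strategy combines M\oe{}glin's construction of Arthur packets via iterated partial Jacquet modules with an analysis of how parabolic induction by shifted Speh representations transports these packets; unlike Theorems~\ref{thm: push integral} and \ref{thm: push half-integral}, here the parameter cannot be simplified further, so the analysis must be performed directly in the ordered, $\zeta_{i} = +$ setting.

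For part (1), the parametrization is essentially forced by M\oe{}glin's construction: each pair $(\ul, \ueta)$ with $0 \leq l_{i} \leq [(A_{i} - B_{i} + 1)/2]$ produces, via iterated pushing moves applied to a tempered base, a candidate element of $\Pkt{\q}$. The sign condition \eqref{eq: sign condition} ensures the accumulated character on the component group is well-defined, while the non-vanishing condition \eqref{eq: nonvanishing 1} detects whether the resulting iterated Jacquet module is nonzero. I would establish \eqref{eq: nonvanishing 1} by direct Jacquet module computations using Tadi\'c's structure formula and Bernstein--Zelevinsky derivatives; the case distinction (according to whether $\eta_{i+1} = (-1)^{A_{i} - B_{i}} \eta_{i}$) reflects whether consecutive pushes proceed in compatible or opposite directions, with vanishing occurring exactly when overlapping ranges create a forced cancellation.

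For part (2), I would build the embedding in two stages. First, starting from the tempered representation $\pi'$ identified via \eqref{eq: discrete series in A-packet}, the $n$ factors $I_{i}$ are inserted by $n$ applications of M\oe{}glin's single-block embedding, in the order $i = n, n-1, \ldots, 1$, so that the socle remains a single irreducible constituent at each step. Second, the gluing segments $\tilde{I}_{j}$ arise precisely when consecutive blocks overlap, i.e.\ $A_{j} - l_{j} \geq B_{j+1} + l_{j+1}$; the midpoint $t_{j}$ is the natural splitting point, and the parity $\delta_{j} \in \{1, 1/2\}$ records whether $t_{j}$ is integral or half-integral relative to $A_{j}$. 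These splits are dictated by the tempered Jordan blocks of $\pi'$ in the prescribed form. Uniqueness of the irreducible subrepresentation, and its membership in $\Pkt{\q}$, follow from the compatibility of M\oe{}glin's iterated construction with parabolic induction on the positive side, together with the multiplicity-one theorem for Arthur packets of classical groups \cite{Moeglin1:2011}. The Langlands parameter $(\phi, \epsilon)$ is then read off directly: $\phi$ is the direct sum of the parameters of the inducing segments $I_{i}$, $\tilde{I}_{j}$, their duals, and $\phi'$; and $\epsilon \leftrightarrow \epsilon'$ under the natural isomorphism $\S{\phi} \cong \S{\phi'}$, because each shifted Speh segment contributes only to connected factors of the centralizer.

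The main obstacle will be proving that the map $(\ul, \ueta) \mapsto \pi(\q, \ul, \ueta)$ is a bijection onto $\Pkt{\q}$. Injectivity modulo $\sim$ follows once the Langlands parameters are computed, since distinct $(\ul, \ueta)$ yield distinct $\phi$ or distinct $\epsilon$. Surjectivity is deeper: one must match the cardinality of $\Pkt{\q}$, determined by Arthur's twisted endoscopic character identity, with the number of valid $(\ul, \ueta)$ modulo the equivalence relation. The entire bookkeeping hinges on a correct analysis of the non-vanishing condition \eqref{eq: nonvanishing 1}, which is where I expect most of the combinatorial work to lie, particularly in distinguishing overlapping ranges from nested ones and in verifying that the gluing data $(t_{j}, \delta_{j})$ determine a single irreducible subrepresentation.
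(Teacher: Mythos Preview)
Your overall shape is in the right neighborhood, but the paper's argument differs from your plan in two structural ways, and your proposal has a genuine gap in each place.

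\textbf{Part (1).} The injection $\Pkt{\q}\hookrightarrow\{(\ul,\ueta)\}/{\sim}$ is not something one proves here: it is M\oe{}glin's general parametrization (reviewed in Section~\ref{sec: review}), valid for any admissible order. What must be proved is that \eqref{eq: nonvanishing 1} characterizes the image. The paper does this in Appendix~\ref{sec: nonvanishing} by induction on the number of Jordan blocks in $J$, using the ``pull'' and ``expand'' operations and the change-of-order formula from \cite{Xu:Comb}; necessity is \cite[Lemma~5.5]{Xu:Comb}, and sufficiency reduces to the two-block case \cite[Proposition~5.2]{Xu:Comb}. Your plan to establish \eqref{eq: nonvanishing 1} by ``direct Jacquet module computations using Tadi\'c's structure formula'' is not wrong in principle, but without the change-of-order machinery you will not be able to carry out the inductive step: the whole point is that after shifting one block far away the remaining condition is for fewer blocks, and the translation of $(\ul,\ueta)$ under reordering is exactly what the change-of-order formula records. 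Likewise, your proposed surjectivity argument via a cardinality match against Arthur's twisted character identity is unnecessary and would be very hard to execute; surjectivity is automatic once you know M\oe{}glin's injection and have characterized its image.

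\textbf{Part (2).} The paper does \emph{not} run a single induction on $\sum_i(A_i-B_i+1)$. It separates the embedding into two steps with two different induction variables. Step~Two (Theorem~\ref{thm: step two}) reduces to the case $\ul=0$ by induction on $\sum_i l_i$: Lemma~\ref{lemma: step two} peels off one row $\langle B_i,\dots,-A_i\rangle$ from each of the widest blocks simultaneously, and the mechanism is passage to a \emph{dominating parameter} $\q_{\gg}$ (shifting blocks by $T_i$ so they become disjoint and far away), applying the known discrete-diagonal-restriction case there, and then taking Jacquet modules back. Step~One (Theorem~\ref{thm: step one}) handles the $\ul=0$ case by induction on $\sum_i\max\{A_i-B_{i+1},0\}$, and this is where the $\tilde I_j$ and the tempered $\sigma^{\Sigma_0}=\pi'$ emerge. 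Your plan to ``insert the $I_i$ by $n$ applications of M\oe{}glin's single-block embedding'' skips the essential difficulty: M\oe{}glin's single-block result applies when the block in question is separated from the others, and when the $[B_i,A_i]$ overlap you cannot invoke it directly. The dominating-parameter trick is precisely what bridges this, and it is absent from your proposal. Similarly, your description of the $\tilde I_j$ as ``arising'' from overlaps is correct as a summary but gives no mechanism; in the paper they are produced row-by-row via Lemma~\ref{lemma: step one}, which in turn relies on Lemma~\ref{lemma: step two} and a reorganization of Jordan blocks using the change-of-order formula.
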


\begin{remark}
When the intervals $[B_{i}, A_{i}]$ are disjoint, the condition \eqref{eq: nonvanishing 1} becomes void. In that case, the result is due to M{\oe}glin \cite[Theorem 4.2]{Moeglin:2009}.
\end{remark}

\subsection{Even orthogonal groups}

Let $G$ be a quasisplit special even orthogonal group over $F$, split over a quadratic extension $E/F$. Let  $\theta_{0}$ be an outer automorphism of $G$ over $F$, induced from the conjugate action of the even orthogonal group. Let $\Sigma_{0} = \langle \theta_{0} \rangle$ and $G^{\Sigma_{0}} = G \rtimes \Sigma_{0}$, which is isomorphic to the even orthogonal group. Let $\hat{\theta}_{0}$ be the dual automorphism on $\hat{G}$, which commutes with the action of ${\rm Gal}(E/F)$. The local Langlands correspondence for $G^{\Sigma_{0}}(F)$ takes the following form: there is a canonical bijection (after fixing a $\theta_{0}$-stable Whittaker datum)  
\[
{\rm Irr}(G^{\Sigma_{0}}(F)) \cong \{(\phi, \epsilon) | \phi \in \bar{\Phi}(G(F)), \epsilon \in {\rm Irr}(\mathcal{S}^{\Sigma_{0}}_{\phi})\},
\]
where $\bar{\Phi}(G(F))$ is the set of $\hat{\theta}_{0}$-orbits of Langlands parameters of $G(F)$, which are $\D{G} \rtimes \langle \hat{\theta}_{0} \rangle$-conjugacy classes of admissible homomorphisms 
\[
\phi: W_{F} \times SL(2, \mathbb{C}) \rightarrow \D{G} \rtimes {\rm Gal}(E/F)
\]
and
\[
\mathcal{S}^{\Sigma_{0}}_{\phi} := \pi_{0}(Z_{\D{G} \rtimes \langle \hat{\theta}_{0} \rangle}(\phi)/Z(\D{G})^{{\rm Gal}(E/F)} ).
\]
This follows from Arthur's results on the local Langlands correspondence for $G(F)$ and the $\theta_{0}$-twisted endoscopic character relations (cf. \cite{Arthur:2013} and \cite[Theorem 4.3]{Xu:Apacket}). We will call the pair $(\phi, \epsilon)$ complete Langlands parameter of $G^{\Sigma_{0}}(F)$, and denote the corresponding representation by $\pi^{\Sigma_{0}}(\phi, \epsilon)$.

For any Arthur parameter $\psi$ of $G(F)$, Arthur \cite{Arthur:2013} has associated it with a finite multi-set $\cPkt{\q}$ of $\Sigma_{0}$-orbits in ${\rm Irr}(G(F))$, in the same way as we have described for symplectic and special odd orthogonal groups. This is also multiplicity free due to Moeglin \cite{Moeglin1:2011}. In \cite{Xu:Comb} we define the Arthur packet $\Pkt{\q}^{\Sigma_{0}}$ for $G^{\Sigma_{0}}(F)$ to be the subset of isomorphism classes of irreducible representations of $G^{\Sigma_{0}}(F)$, whose restriction to $G(F)$ have irreducible constituents in $\cPkt{\q}$. In this paper, we will also prove the analogues of Theorem~\ref{thm: push integral}, ~\ref{thm: push half-integral}, ~\ref{thm: special case} for $\Pkt{\q}^{\Sigma_{0}}$.

\section{Review of Moeglin's parametrization}
\label{sec: review}

From now on, we will let $G$ be a quasisplit symplectic or special orthogonal group over a $p$-adic field $F$. In order to get a uniform description, we will also take $\Sigma_{0} = 1$ and $G^{\Sigma_{0}} = G$, when $G$ is not special even orthogonal. Let $\q$ be an Arthur parameter of $G(F)$. We will review Moeglin's parametrization of elements in $\Pkt{\q}^{\Sigma_{0}}$. The reader is referred to \cite{Xu:Apacket} \cite{Xu:Comb} for more details. 

Let $\q_{p}$ be the parameter consisting of Jordan blocks of $\q$ that has the same parity as $\D{G}$, and $>_{\q}$ be an admissible order on $Jord(\q_{p})$. The admissibility condition requires that for any $(\rho, A, B, \zeta), (\rho, A', B', \zeta') \in Jord(\q_{p})$ satisfying $A > A', B > B'$ and $\zeta = \zeta'$, we have $(\rho, A, B, \zeta) >_{\q} (\rho, A', B', \zeta')$. Then M{\oe}glin showed that there is an injection depending on $>_{\q}$
\begin{align}
\label{eq: parametrization}
\Pkt{\q}^{\Sigma_{0}} \hookrightarrow \Big\{(\ul, \ueta) \in \mathbb{Z}^{Jord(\q_{p})} \times \{\pm 1\}^{Jord(\q_{p})} \, | \, \ul(\rho, A, B, \zeta) \in [0, (A - B + 1)/2], \text{ \eqref{eq: sign conditin general} is satisfied} \Big\}/_{\sim_{\Sigma_{0}}},
\end{align}
where 
\begin{align}
\label{eq: sign conditin general}
\prod_{(\rho, A, B, \zeta) \in Jord(\q_{p})} \epsilon_{\ul, \ueta}(\rho, A, B, \zeta) = 1
\end{align}
and 
\[
\epsilon_{\ul, \ueta}(\rho, A, B, \zeta) := \ueta(\rho, A, B, \zeta)^{A - B + 1}(-1)^{[(A - B + 1)/2] + \ul(\rho, A, B, \zeta)}.
\] 
Here we say $(\ul, \ueta) \sim_{\Sigma_{0}} (\ul', \ueta')$ if and only if 
\[
\begin{cases}
\ul = \ul' \\
(\ueta/\ueta')(\rho, A, B, \zeta) = 1 \text{ unless } \ul(\rho, A, B, \zeta) = (A - B + 1)/2.
\end{cases}
\] 
This is the parametrization appearing in Section~\ref{subsec: special case}, where we have implicitly chosen the order $>_{\q}$ to be that of the indexes. For any $(\ul, \ueta)$ in \eqref{eq: parametrization}, we let $\pi^{\Sigma_{0}}_{M, >_{\q}}(\q, \ul, \ueta)$ be the associated representation if $(\ul, \ueta)$ is in the image, or zero otherwise. M{\oe}glin also expressed the nonvanishing of $\pi^{\Sigma_{0}}_{M, >_{\q}}(\q, \ul, \eta)$ in terms of the nonvanishing of certain Jacquet module (cf. \eqref{eq: push back}). Following this description, we have developed a procedure in \cite{Xu:Comb} to determine the image explicitly. As an application, we give the formula \eqref{eq: nonvanishing 1} for characterizing the image in the special case (cf. Section~\ref{subsec: special case}). The proof will be given in Appendix~\ref{sec: nonvanishing}. 

What turns out crucial to this procedure \cite{Xu:Comb} is to understand how the injection \eqref{eq: parametrization} changes when one changes the order $>_{\q}$. This is also one of the main results in \cite{Xu:Comb} and we will recall it here. Suppose we have two adjacent Jordan blocks $(\rho, A_i, B_i, \zeta_{i})$ $(i = 1, 2)$ with respect to the admissible order $>_{\psi}$, and
\[
(\rho, A_2, B_2, \zeta_{2}) >_{\psi} (\rho, A_1, B_1, \zeta_{1}).
\]
Suppose the new order $>'_{\psi}$ obtained by switching the two is still admissible. Then by definition, either $\zeta_{1} \neq \zeta_{2}$ or one of $\{[B_{i}, A_{i}]\}_{i = 1, 2}$ is included in the other. Let us define $\q_{-}$ by
\[
Jord(\q_{-}) = Jord(\q) \backslash \{(\rho, A_{2}, B_{2}, \zeta_{2}), (\rho, A_{1}, B_{1}, \zeta_{1})\}.
\]
Suppose 
\[
\r^{\Sigma_{0}}_{M, >_{\q}}(\q, \ul, \ueta) = \r^{\Sigma_{0}}_{M, >'_{\q}}(\q, \ul', \ueta') \neq 0,
\]
then the restrictions of $(\ul, \ueta)$ and $(\ul', \ueta')$ to $Jord(\q_{-})$ are equivalent with respect to ($\sim_{\Sigma_{0}}$) and the following conditions are satisfied.

\begin{enumerate}

\item If $\zeta_{1} = \zeta_{2}$, it suffices to consider the case $[B_2, A_2] \supseteq [B_1, A_1]$. Then we are in one of the following situations.

\begin{enumerate}

\item If $\eta_{2} \neq (-1)^{A_{1} - B_{1}} \eta_{1}$ and $\eta'_{1} = (-1)^{A_{2} - B_{2}} \eta'_{2}$, then
         \[
         \begin{cases}
         l_{1} = l'_{1} \\
          l_{2} - l'_{2} = (A_{1} - B_{1} - 2l_{1}) + 1 \\
         \eta'_{1} = (-1)^{A_{2} - B_{2}} \eta_{1} 
         \end{cases}
         \]
         
\item If $\eta_{2} = (-1)^{A_{1} - B_{1}} \eta_{1}$ and $\eta'_{1} \neq (-1)^{A_{2} - B_{2}}\eta'_{2}$, then 
         \[
         \begin{cases}
         l_{1} = l'_{1} \\
         l'_{2} - l_{2} = (A_{1} - B_{1} - 2l_{1}) + 1 \\
         \eta'_{1} = (-1)^{A_{2} - B_{2}} \eta_{1} 
         \end{cases}
         \]         
         
\item If $\eta_{2} = (-1)^{A_{1} - B_{1}} \eta_{1}$ and $\eta'_{1} = (-1)^{A_{2} - B_{2}}\eta'_{2}$, then 
         \[
         \begin{cases}
         l_{1} = l'_{1} \\
         (l'_{2} - l'_{1}) + (l_{2} - l_{1}) = (A_{2} - B_{2}) - (A_{1} - B_{1}) \\
         \eta'_{1} = (-1)^{A_{2} - B_{2}} \eta_{1} 
         \end{cases}
         \]          

\end{enumerate}

\item If $\zeta_{1} \neq \zeta_{2}$, then 
\[
\begin{cases}
l'_{2} = l_{2} \\
l'_{1} = l_{1} \\
\eta_{2} = (-1)^{A_{1} - B_{1} + 1} \eta'_{2} \\
\eta_{1} = (-1)^{A_{2} - B_{2} + 1} \eta'_{1}
\end{cases}
\]

\end{enumerate}
This formula suggests that for $(\rho, A, B, \zeta) \in Jord(\q_{p})$ with $B = 0$, the choice of sign $\zeta$ will affect the parametrization (cf. \cite[Proposition 7.5]{Xu:Comb}).

\subsection{Terminology}

We recall a few terminologies from \cite{Xu:Apacket} \cite{Xu:Comb}. Let $\q$ be an Arthur parameter of $G(F)$ such that $\q = \q_{p}$. Let $\rho$ be an irreducible unitary supercuspidal representation of $GL(d_{\rho}, F)$. We denote by $Jord_{\rho}(\q)$ the subset of $Jord(\q)$ containing $\rho$. A subset $J$ of $Jord_{\rho}(\q)$ is said to have {\bf discrete diagonal restriction} if the intervals $[B, A], [B', A']$ do not intersect for any $(\rho, A, B, \zeta), (\rho, A', B', \zeta') \in Jord_{\rho}(\q)$. We say $\q$ has discrete diagonal restriction if $Jord_{\rho}(\q)$ has discrete diagonal restriction for all $\rho$. A Jordan block $(\rho, A, B, \zeta)$ is said to be {\bf far away} from a subset $J$ of $Jord_{\rho}(\q)$ if
\[
B > 2^{|J|} \cdot \Big(\sum_{(\rho, A', B', \zeta') \in J} A' + |J| \sum_{(\rho, A', B', \zeta') \in Jord_{\rho}(\q)}(A' - B' + 1) \Big)
\]
and we will write $(\rho, A, B, \zeta) \gg J$ (cf. \cite[Section 2]{Xu:Comb}). 

Let $>_{\q}$ be an admissible order on $Jord(\q)$ and we index $Jord_{\rho}(\q)$ for each $\rho$ so that 
\[
(\rho, A_{i}, B_{i}, \zeta_{i}) >_{\q} (\rho, A_{i-1}, B_{i-1}, \zeta_{i-1}).
\]
A new parameter $\q_{\gg}$ is said to {\bf dominate} $\q$ with respect to $>_{\q}$ if $Jord_{\rho}(\q_{\gg})$ is obtained by shifting $(\rho, A_{i}, B_{i}, \zeta_{i})$ to $(\rho, A_{i} + T_{i}, B_{i} + T_{i}, \zeta_{i})$ with $T_{i} \geqslant 0$ for each $\rho$, and $>_{\q}$ induces an admissible order on $Jord(\q_{\gg})$. In this case, $\r_{M, >_{\q}}^{\Sigma_{0}}(\q, \ul, \ueta)$ and $\r_{M, >_{\q}}^{\Sigma_{0}}(\q_{\gg}, \ul, \ueta)$ are related as follows:
\begin{align}
\label{eq: push back}
\r_{M, >_{\q}}^{\Sigma_{0}}(\q, \ul, \ueta) := \circ_{\{\rho: Jord_{\rho}(\q) \neq \emptyset \}} \circ_{(\rho, A_{i}, B_{i}, \zeta_{i}) \in Jord_{\rho}(\q)} \Jac_{(\rho, A_{i} + T_{i}, B_{i} + T_{i}, \zeta_{i}) \mapsto (\rho, A_{i}, B_{i}, \zeta_{i})} \r_{M, >_{\q}}^{\Sigma_{0}}(\q_{\gg}, \ul, \ueta),
\end{align}
where $i$ is decreasing (cf. \cite[Remark 8.4]{Xu:Apacket}). If we further assume both of them are nonzero, then we have
\[
\r_{M, >_{\q}}^{\Sigma_{0}}(\q_{\gg}, \ul, \ueta) \hookrightarrow \times_{\{\rho: Jord_{\rho}(\q) \neq \emptyset \}} \times_{(\rho, A_{i}, B_{i}, \zeta_{i}) \in Jord_{\rho}(\q)} \begin{pmatrix}
              \zeta_{i} (B_{i} + T_{i}) & \cdots & \zeta_{i}(B_{i} + 1) \\
              \vdots &  & \vdots \\
              \zeta_{i} (A_{i} + T_{i}) & \cdots & \zeta_{i}(A_{i} + 1)
       \end{pmatrix} \rtimes \r_{M, >_{\q}}^{\Sigma_{0}}(\q, \ul, \ueta).
\]
where $i$ is increasing (cf. \cite[Proposition 8.5]{Xu:Apacket}).

At last, we will say a few words about the operators used in \eqref{eq: push back}. Let $M = GL(d_{\rho}) \times G_{-}$ be the Levi component of a standard maximal parabolic subgroup $P$ of $G$. For any finite-length smooth representation $\r^{\Sigma_{0}}$ of $G^{\Sigma_{0}}(F)$, we can decompose the semisimplification of its Jacquet module as follows
\[
s.s. \, \Jac_{P}(\r^{\Sigma_{0}}) = \bigoplus_{i} \tau_{i} \otimes \sigma_{i},
\] 
where $\tau_{i}$ (resp. $\sigma_{i}$) are irreducible representations of $GL(d_{\rho}, F)$ (resp. $G_{-}^{\Sigma_{0}}(F)$). We define $\Jac_{x}\r^{\Sigma_{0}}$ for any real number $x$ to be 
\begin{align*}
\Jac_{x}(\r^{\Sigma_{0}}) = \bigoplus_{\tau_{i} = \rho||^{x}} \sigma_{i}.
\end{align*}
We also define
\[
\Jac_{x_{1}, \cdots, x_{s}}\r^{\Sigma_{0}} = \Jac_{x_{s}} \circ \cdots \circ \Jac_{x_{1}} \r^{\Sigma_{0}}
\]
for any ordered sequence of real numbers $\{x_{1}, \cdots, x_{s}\}$. Let
\[
X_{i} = \begin{bmatrix}
    \zeta_{i} (B_{i} + T_{i}) & \cdots & \zeta_{i}(B_{i} + 1) \\
              \vdots &  & \vdots \\
              \zeta_{i} (A_{i} + T_{i}) & \cdots & \zeta_{i}(A_{i} + 1)
  \end{bmatrix}
\]
with respect to $(\rho, A_{i} + T_{i}, B_{i} + T_{i}, \zeta_{i})$ in the previous paragraph. Then $\Jac_{(\rho, A_{i} + T_{i}, B_{i} + T_{i}, \zeta_{i}) \mapsto (\rho, A_{i}, B_{i}, \zeta_{i})}$ is defined to be $\Jac_{X_{i}} := \circ_{x \in X_{i}} \Jac_{x}$, where $x$ ranges over $X_{i}$ from top to bottom and left to right.

\section{Step One}
\label{sec: step one}

In the next three sections, we will give the proofs of the main results stated in the introduction. The Arthur parameters (cf. \eqref{eq: local A-parameter}) considered in these proofs are always under the assumption \eqref{eq: fix rho}. Later we will make some comments on the general case (cf. Section~\ref{sec: general}).

In step one, we consider a subclass of representations in the special case (cf. Section~\ref{subsec: special case}). In the special case, we have
\[
\q = \oplus_{i = 1}^{n} (\rho \otimes \nu_{a_{i}} \otimes \nu_{b_{i}})
\]
where $A_{i} \geqslant A_{i-1}, B_{i} \geqslant B_{i-1}$ and $\zeta_{i} = +$. We fix the order so that
\[
(\rho, A_{i}, B_{i}, \zeta_{i}) >_{\q} (\rho, A_{i-1}, B_{i-1}, \zeta_{i-1}).
\]
Now let us consider $\pi^{\Sigma_{0}}_{M, >_{\psi}}(\q, \ul, \ueta)$, where $\ul = 0$. In this case, we can reinterpret the nonvanishing condition \eqref{eq: nonvanishing 1} as follows.
\begin{itemize}

\item If $A_{i}\geqslant B_{i+1}$, then $\eta_{i+1} = (-1)^{A_{i} - B_{i}}\eta_{i}$. Let $t_{i} = (A_{i} + B_{i+1})/2$ and 
\[
\delta_{i} = \begin{cases}
1 & \text{ if $t_{i} - A_{i} \in \mathbb{Z}$ } \\
1/2 & \text{ if $t_{i} - A_{i} \notin \mathbb{Z}$ }
\end{cases}
\]

\item If $A_{i} < B_{i+1}$, then there is no condition on $\eta_{i+1}$.

\end{itemize}
Consider the maximal sequence of integers 
\[
0 = k_{0} < \cdots < k_{r} = n
\]
such that $A_{k_{j}} < B_{k_{j} + 1}$. We would like to show

\begin{theorem}
\label{thm: step one}
\begin{align}
\label{eq: step one}
& \pi^{\Sigma_{0}}_{M, >_{\psi}}(\q, \ul, \ueta) \hookrightarrow \times_{i = 1}^{r} \, \times_{k_{i-1} < j < k_{i}} 
\underbrace{\begin{pmatrix}
              B_{j+1}  & \cdots & -A_{j} \\
              \vdots &  & \vdots \\
              t_{j} - \delta_{j} & \cdots & -(t_{j} + \delta_{j}) 
\end{pmatrix}}_{\tilde{I}_{j}} \rtimes \sigma^{\Sigma_{0}} 
\end{align}
as the unique irreducible subrepresentation, where
\[
\sigma^{\Sigma_{0}} := \pi^{\Sigma_{0}}_{M, >_{\psi}} \Big(\cup_{i} \Big\{  \cdots \Big\} \Big)
\]
is a tempered representation with 
\[
\Big\{ \cdots \Big\} = \Big\{ (\rho, A_{k_{i}}, B_{k_{i}}, 0, \eta_{k_{i}}, +) \Big\}
\]
if $k_{i} - k_{i-1} = 1$, and
\begin{align*}
\Big\{ \cdots \Big\} = & \Bigg\{ \Big(\rho, A_{k_{i}}, t_{k_{i} - 1} - \delta_{k_{i} - 1} + 1, 0, (-1)^{t_{k_{i} - 1} - \delta_{k_{i} - 1} + 1 - B_{k_{i}} }\eta_{k_{i}}, + \Big), \\
& \cup_{k_{i -1} + 1 < j < k_{i}}  \Big(\rho, t_{j} + \delta_{j} - 1, t_{j-1} - \delta_{j-1} + 1, 0, (-1)^{t_{j-1} - \delta_{j-1} + 1 - B_{j} }\eta_{j}, + \Big), \\
& \Big(\rho, t_{k_{i-1}+1} + \delta_{k_{i-1} + 1} - 1, B_{k_{i-1}+1}, 0 , \eta_{k_{i-1} + 1}, + \Big) \Bigg\}
\end{align*}
otherwise. Moreover, the induced representation $\mathcal{I}$ in \eqref{eq: step one} is a subrepresentation of the costandard representation, obtained by taking induction of the shifted Steinberg representations from rows of $\tilde{I}_{j}$ together with $\sigma^{\Sigma_{0}}$.

\end{theorem}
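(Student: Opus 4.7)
My plan is to deduce Theorem~\ref{thm: step one} from the tempered case via a dominating-parameter argument combined with the push-back formula \eqref{eq: push back} and Frobenius reciprocity, with an induction on the number $r$ of tempered clumps $\{k_{i-1} < j \leqslant k_{i}\}$. Choose integers $T_{i} \geqslant 0$ (increasing in $i$) large enough that $\q_{\gg}$, obtained by shifting every Jordan block $(\rho, A_{i}, B_{i}, +)$ to $(\rho, A_{i} + T_{i}, B_{i} + T_{i}, +)$, has discrete diagonal restriction. Then $\pi^{\Sigma_{0}}_{M, >_{\q}}(\q_{\gg}, 0, \ueta)$ is a tempered (indeed discrete series) representation with completely explicit complete Langlands parameter, and \eqref{eq: push back} supplies the embedding
\[
\pi^{\Sigma_{0}}_{M, >_{\q}}(\q_{\gg}, 0, \ueta) \hookrightarrow \prod_{i=1}^{n} X_{i} \rtimes \pi^{\Sigma_{0}}_{M, >_{\q}}(\q, 0, \ueta),
\]
where $X_{i}$ is the shifted Speh matrix with rows running from $(B_{i} + T_{i}, \ldots, B_{i}+1)$ down to $(A_{i} + T_{i}, \ldots, A_{i}+1)$. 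The problem reduces to rearranging the product of the $X_{i}$'s, together with the tempered representation on the right, into the shape stated in the theorem.

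The base case $r = n$ corresponds to all intervals $[B_{i}, A_{i}]$ being pairwise disjoint: then there are no $\tilde{I}_{j}$ factors, $\sigma^{\Sigma_{0}} = \pi^{\Sigma_{0}}_{M, >_{\q}}(\q, 0, \ueta)$ is itself tempered, and the assertion is tautological. For the inductive step I fix an index $j$ with $k_{i-1} < j < k_{i}$ (so $A_{j} \geqslant B_{j+1}$) and work on the adjacent pair $(\rho, A_{j}, B_{j}, +), (\rho, A_{j+1}, B_{j+1}, +)$. Applying case (1)(c) of the adjacent transposition formula from Section~\ref{sec: review} inside a suitably dominating parameter, I extract a shifted Speh block corresponding to $\tilde{I}_{j}$ from the inducing side, while the overlapping portions of the two Jordan blocks fuse across the midpoint $t_{j}$: the upper becomes $(\rho, A_{j+1}, t_{j} + \delta_{j}, +)$ and the lower becomes $(\rho, t_{j} - \delta_{j}, B_{j}, +)$. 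Iterating along each clump $\{k_{i-1} < j \leqslant k_{i}\}$, and invoking transitivity of parabolic induction and associativity of $\Jac$, produces the embedding in \eqref{eq: step one}.

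The main obstacle is the combinatorial bookkeeping, on two fronts. First, I must verify that the midpoints $t_{j}$ and the half-integer shifts $\delta_{j}$ indeed match the rows of the matrix $\tilde{I}_{j}$ emerging from the $\Jac$ extraction, including the dichotomy $t_{j} - A_{j} \in \mathbb{Z}$ versus $\notin \mathbb{Z}$; this is a direct check of where the complementary Steinberg segments truncate the original ones. Second, the signs $\eta_{j}$ must be tracked carefully: the hypothesis $\eta_{j+1} = (-1)^{A_{j} - B_{j}}\eta_{j}$ inherited from \eqref{eq: nonvanishing 1} is exactly the condition required to invoke case (1)(c) of the transposition formula, and a short induction then shows that the new sign $(-1)^{t_{j-1} - \delta_{j-1} + 1 - B_{j}}\eta_{j}$ appearing on the absorbed Jordan block of $\sigma^{\Sigma_{0}}$ is precisely what these successive order changes produce; the boundary contributions at $j = k_{i-1}+1$ and $j = k_{i}$ are then read off directly from the stated formulas.

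Finally, uniqueness of the irreducible subrepresentation of the induction $\mathcal{I}$ follows because each shifted Speh $\tilde{I}_{j}$ is the unique irreducible subrepresentation of the induction from its Steinberg rows (Zelevinsky/Tadi\'c theory for segments), so the socle of $\mathcal{I}$ is controlled by the socle of the costandard induction of all Steinberg rows together with $\sigma^{\Sigma_{0}}$, and $\pi^{\Sigma_{0}}_{M, >_{\q}}(\q, 0, \ueta)$ occurs there with multiplicity one by the Jacquet-module computation above and Frobenius reciprocity. The costandard assertion is then immediate from the exactness of parabolic induction applied to the embedding $\tilde{I}_{j} \hookrightarrow \times_{\text{rows}} \langle \cdot \rangle$ of each shifted Speh into the induction of its shifted Steinberg rows.
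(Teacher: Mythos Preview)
Your plan has two genuine gaps.

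First, the push-back embedding points the wrong way. The formula you quote gives
\[
\pi^{\Sigma_{0}}_{M, >_{\q}}(\q_{\gg}, 0, \ueta) \hookrightarrow \Big(\times_{i} X_{i}\Big) \rtimes \pi^{\Sigma_{0}}_{M, >_{\q}}(\q, 0, \ueta),
\]
i.e.\ it embeds the \emph{dominated} representation $\pi(\q_{\gg})$ into an induction from $\pi(\q)$. What you need is an embedding of $\pi(\q)$ itself into an induction from a tempered $\sigma^{\Sigma_{0}}$. There is no general way to ``invert'' this and pass the $X_{i}$ to the other side, so the reduction you describe does not get off the ground.

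Second, the change-of-order formula does not do what you claim. Case (1)(c) relates two labellings $(\ul,\ueta)$ and $(\ul',\ueta')$ of the \emph{same} representation under two admissible orders on the \emph{same} multiset $Jord(\q)$. It never alters the intervals $[B_{i},A_{i}]$ and never produces an inclusion into an induced representation, so it cannot ``extract a shifted Speh block $\tilde{I}_{j}$'' nor ``fuse the overlapping portions'' into new blocks $(\rho, A_{j+1}, t_{j}+\delta_{j}, +)$ and $(\rho, t_{j}-\delta_{j}, B_{j}, +)$.

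The paper proceeds quite differently. It inducts on the total overlap $\sum_{i} \max\{A_{i}-B_{i+1},0\}$ and, at the index of maximal overlap, strips off a single \emph{row} $\langle B_{j+1},\dots,-A_{j}\rangle$ (not the whole $\tilde{I}_{j}$). The mechanism for stripping that row (Lemma~\ref{lemma: step one}) is not the change-of-order formula alone: one first \emph{splits} the boundary Jordan blocks (e.g.\ $(\rho,A_{s+1},B_{s+1},\ldots)$ into $(\rho,A_{s+1},B_{s+1}+1,\ldots)$ and $(\rho,B_{s+1},B_{s+1},\ldots)$), then uses the change-of-order formula to migrate the small piece, and finally invokes Lemma~\ref{lemma: step two} to obtain the actual inclusion. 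Repeating this row by row and combining gives \eqref{eq: step one}. The costandard claim is not automatic from exactness of induction: one must check that every shifted Steinberg segment of smaller shift than the new row is interchangeable with it, using the maximality of $A_{s}-B_{s+1}$; only then does the Langlands-quotient uniqueness apply.
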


The following corollary is an immediate consequence of the theorem.

\begin{corollary}
In the notations of Theorem~\ref{thm: step one}, the complete Langlands parameter $(\phi, \epsilon)$ of $\pi^{\Sigma_{0}}_{M, >_{\psi}}(\q, \ul, \ueta)$ is given as follows:
\[
\phi = (\oplus_{j}  \tilde{\phi}_{j}) \oplus \phi' \oplus (\oplus_{j} \tilde{\phi}_{j}^{\vee}) 
\]
where $\tilde{\phi}_{j}$ is the Langlands parameter of $\tilde{I}_{j}$, $(\phi', \epsilon')$ is the complete Langlands parameter of $\sigma^{\Sigma_{0}}$, and $\epsilon$ corresponds to $\epsilon'$ under the natural isomorphism
\(
\mathcal{S}^{\Sigma_{0}}_{\phi} \cong \mathcal{S}^{\Sigma_{0}}_{\phi'}.
\)
\end{corollary}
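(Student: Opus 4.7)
The corollary is intended as an immediate consequence of Theorem~\ref{thm: step one}, and my plan is to derive it by reading off the Langlands data directly from the embedding in \eqref{eq: step one}. First, I would record the Langlands parameters of the inducing pieces. Each shifted Speh representation $\tilde I_{j}$ has, by construction as the unique irreducible subrepresentation of the induction \eqref{eq: Speh} of shifted Steinberg representations, a completely explicit Langlands parameter $\tilde\phi_{j}$ of the form $\oplus_{i} \rho||^{x_{i,j}} \otimes \nu_{a_{j}}$, where the shifts $x_{i,j}$ are determined by the rows of $\tilde I_{j}$; I would write this out once and for all by inspection. The tempered representation $\sigma^{\Sigma_{0}}$ has complete Langlands parameter $(\phi', \epsilon')$ described via the discrete-series recipe \eqref{eq: discrete series in A-packet}, which is already spelled out in the statement of Theorem~\ref{thm: step one}.

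Next, I would invoke the Langlands classification for $G^{\Sigma_{0}}(F)$. By the last sentence of Theorem~\ref{thm: step one}, the induced representation $\mathcal{I}$ in \eqref{eq: step one} sits inside the costandard module obtained by fully unfolding each $\tilde I_{j}$ into its shifted Steinberg rows and then inducing together with $\sigma^{\Sigma_{0}}$. Since the exponents along each Steinberg row of $\tilde I_{j}$ are arranged so that the parabolic induction produces a genuine standard/costandard module, its unique irreducible subrepresentation is the Langlands subrepresentation attached to that data. Because parabolic induction from the Levi $\prod_{j} GL(d_{\tilde\phi_{j}}) \times G'(F)$ corresponds on the dual side to adding $\tilde\phi_{j} \oplus \tilde\phi_{j}^{\vee}$ to $\phi'$, the Langlands parameter of $\pi^{\Sigma_{0}}_{M, >_{\psi}}(\q, \ul, \ueta)$ is forced to be
\[
\phi \;=\; (\oplus_{j} \tilde\phi_{j}) \oplus \phi' \oplus (\oplus_{j} \tilde\phi_{j}^{\vee}),
\]
as claimed.

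For the component-group statement, the summand $\oplus_{j}(\tilde\phi_{j} \oplus \tilde\phi_{j}^{\vee})$ is hyperbolic, so its centralizer in $\D{G} \rtimes \langle \hat\theta_{0} \rangle$ is a product of general linear factors which contribute nothing to the component group modulo the center. Consequently one obtains a natural isomorphism $\mathcal{S}^{\Sigma_{0}}_{\phi} \cong \mathcal{S}^{\Sigma_{0}}_{\phi'}$, and the identification $\epsilon \leftrightarrow \epsilon'$ is the standard compatibility of parabolic induction with the local Langlands correspondence for classical groups (and its $\Sigma_{0}$-twisted version, cf.\ \cite[Theorem 4.3]{Xu:Apacket}).

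The only genuine obstacle is checking that the ordered product in \eqref{eq: step one}, read row by row out of the matrices $\tilde I_{j}$, really presents $\pi^{\Sigma_{0}}_{M, >_{\psi}}(\q, \ul, \ueta)$ as the Langlands subrepresentation of a bona fide standard module — that is, that the exponents decrease along the required ordering so that the Langlands classification applies verbatim and produces the parameter $\phi$ above rather than a twist or a conjugate. This is exactly the content of the final assertion of Theorem~\ref{thm: step one}, so once that theorem is established the corollary requires no further input beyond bookkeeping.
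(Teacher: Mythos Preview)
Your proposal is correct and matches the paper's approach: the paper simply declares the corollary ``an immediate consequence of the theorem'' with no further argument, and what you have written is precisely the unpacking of why it is immediate---namely, that the final assertion of Theorem~\ref{thm: step one} presents $\pi^{\Sigma_{0}}_{M, >_{\psi}}(\q, \ul, \ueta)$ as the Langlands subrepresentation of a costandard module, from which the Langlands data can be read off directly, and the hyperbolic summand $\oplus_{j}(\tilde\phi_{j}\oplus\tilde\phi_{j}^{\vee})$ contributes trivially to the component group.
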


 
We will prove Theorem~\ref{thm: step one} by induction on 
\[
\sum_{i=1}^{n-1} \text{ max }\{ A_{i} - B_{i + 1}, 0\}.
\]
Suppose it is not zero. Let us choose the maximal integer $s < n$ such that $A_{s} - B_{s + 1} \geqslant A_{i} - B_{i+1}$ for all $1 \leqslant i < n$. By maximality of $s$, we have $B_{s + 2} > B_{s +1}$ or $s = n-1$. Moreover, there exists $l \leqslant s + 1$ such that
\[
A_{s} = A_{s - 1} = \cdots = A_{l-1} \text{ and } B_{s+1} = B_{s} = \cdots = B_{l}
\]
and $A_{l-1} > A_{l-2}$ or $l = 2$. 

\begin{lemma}
\label{lemma: step one}
\[
\pi^{\Sigma_{0}}_{M, >_{\psi}}(\q, \ul, \ueta) \hookrightarrow \times_{j = l-1}^{s} \langle B_{j+1}, \cdots, -A_{j} \rangle \rtimes \pi^{\Sigma_{0}}_{M, >_{\psi}}(\q', \ul, \ueta'),
\] 
where $A'_{j} = A_{j} - 1$, $B'_{j+1} = B_{j + 1} + 1$ and $\eta'_{j + 1} = - \eta_{j + 1}$ for $l-1 \leqslant j \leqslant s$.
\end{lemma}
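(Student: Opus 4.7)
The plan is to pass from $\q$ to a dominating parameter $\q_{\gg}$ with discrete diagonal restriction in the sense of Section~\ref{sec: review}, describe the representation there via M{\oe}glin's formula \eqref{eq: Moeglin}, and then push back to $\q$ via \eqref{eq: push back}. Concretely, I would choose shifts $T_{i} \geqslant 0$ so that both $\q_{\gg}$ and the parameter $\q'_{\gg}$ obtained by applying the same shifts to $\q'$ have discrete diagonal restriction. Since $A_{l-1} = \cdots = A_{s}$ and $B_{l} = \cdots = B_{s+1}$, the Jordan blocks of $\q$ in the run $l-1 \leqslant j \leqslant s$ are nested and become disjoint in $\q_{\gg}$ once the shifts grow sufficiently fast. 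Because $\ul = 0$, the representation $\r^{\Sigma_{0}}_{M, >_{\q}}(\q_{\gg}, \ul, \ueta)$ is a discrete series, described block by block by the $l = 0$ case of \eqref{eq: Moeglin}.

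Next, \eqref{eq: push back} yields
\[
\r^{\Sigma_{0}}_{M, >_{\q}}(\q, \ul, \ueta) = \Jac_{X} \, \r^{\Sigma_{0}}_{M, >_{\q}}(\q_{\gg}, \ul, \ueta),
\]
with $X$ the iterated Jacquet-module sequence prescribed by the shifts; a parallel formula holds for $\q'$. The difference between the two Jacquet-module sequences localizes in the run $l-1 \leqslant j \leqslant s$. Using the standard commutation relations for $\Jac_{x}$ on Bernstein--Zelevinsky products (as employed in \cite{Xu:Apacket} and \cite{Xu:Comb}), I would reorder $X$ as $X' \cdot Y$, where $Y$ successively peels off the Steinberg segment $\langle B_{j+1}, \ldots, -A_{j} \rangle$ for $l-1 \leqslant j \leqslant s$, and $X'$ matches the push-back data of $\q'$ after the adjustment $A_{j} \to A_{j}-1$, $B_{j+1} \to B_{j+1}+1$. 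Combined with Frobenius reciprocity, this produces the desired embedding.

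The sign flip $\eta'_{j+1} = -\eta_{j+1}$ is forced by the change-of-order formula (case (1)(c)) of Section~\ref{sec: review}, applied iteratively across the run to track how the $\eta$-invariants transform when the $A$- and $B$-endpoints of adjacent nested blocks are simultaneously contracted by one. One then verifies the global sign condition \eqref{eq: sign condition} for $(\q', \ul, \ueta')$: since $A'_{j+1} - B'_{j+1} + 1$ and $A_{j+1} - B_{j+1} + 1$ differ by $2$, a short parity count shows the total product of $\epsilon_{l_{i}, \eta_{i}}$ is preserved under the simultaneous shift and sign flips, so $\r^{\Sigma_{0}}_{M, >_{\q}}(\q', \ul, \ueta')$ is nonzero.

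The main obstacle is to confirm that the resulting embedding is into the \emph{unique} irreducible subrepresentation rather than merely some irreducible constituent of the induced representation. Because the intervals $[B_{j}, A_{j}]$ overlap heavily within the run, a naive extraction of Steinberg segments from $\r^{\Sigma_{0}}_{M, >_{\q}}(\q_{\gg}, \ul, \ueta)$ can yield a Jacquet module with several irreducible components. Uniqueness requires a Frobenius reciprocity argument together with a multiplicity-one count for the relevant cuspidal support, appealing to the branching behavior of Speh representations and to the single-block case of \eqref{eq: Moeglin} for identifying the lowest-weight constituent. Finally, since $\sum_{i} \max\{A'_{i} - B'_{i+1}, 0\} < \sum_{i} \max\{A_{i} - B_{i+1}, 0\}$, this lemma provides the inductive step needed for Theorem~\ref{thm: step one}.
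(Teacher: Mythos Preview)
Your approach has a genuine gap at its core. The push-back formula \eqref{eq: push back} expresses $\r^{\Sigma_{0}}_{M,>_{\q}}(\q,\ul,\ueta)$ as an iterated Jacquet module of a discrete series (since $\ul=0$ at the dominating level), but a Jacquet-module identity does not by itself yield an embedding into a parabolically induced representation with prescribed Steinberg factors. You write that you would ``reorder $X$ as $X'\cdot Y$, where $Y$ successively peels off the Steinberg segment $\langle B_{j+1},\ldots,-A_{j}\rangle$,'' but $X$ is a sequence of $\Jac_{x}$ operators, not a representation; there is no Steinberg segment inside $X$ to peel off. At the dominating level with $\ul=0$ the representation is already a discrete series with no $\langle\cdot\rangle$-factor present, so nothing in your scheme explains where the segments $\langle B_{j+1},\ldots,-A_{j}\rangle$ originate. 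They arise precisely from the overlap of the intervals $[B_{i},A_{i}]$ once one leaves the discrete-diagonal-restriction regime, and capturing that requires a different mechanism.

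Your justification of the sign flip is also off: case (1)(c) of the change-of-order formula governs swapping two adjacent nested blocks of fixed sizes, not contracting endpoints $A_{j}\to A_{j}-1$, $B_{j+1}\to B_{j+1}+1$. The paper's proof proceeds quite differently. Because $\ul=0$, one may \emph{split} the extreme blocks $(\rho,A_{s+1},B_{s+1},0,\eta_{s+1},+)$ and $(\rho,A_{l-1},B_{l-1},0,\eta_{l-1},+)$ into two pieces each, extract the singleton $(\rho,B_{s+1},B_{s+1},0,\eta_{s+1},+)$, and move it through the run using the change-of-order formula. After recombining at the bottom, the run of blocks now carries $l_{i}=1$ (with signs flipped), and one is exactly in the situation of Lemma~\ref{lemma: step two}, which extracts the segments $\langle B_{j+1},\ldots,-A_{j}\rangle$ and returns the parameter to $l'_{i}=0$ with the smaller intervals. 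The sign change $\eta'_{j+1}=-\eta_{j+1}$ thus comes from this split--move--recombine manoeuvre, not from case (1)(c) applied to a contraction.
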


\begin{proof}
Since $\ul = 0$, we can reorganize the Jordan blocks for $l-1 \leqslant j \leqslant s+1$ as
\begin{align*}
& (\rho, A_{s+1}, B_{s+1} + 1, 0, -\eta_{s+1}, +) > (\rho, B_{s+1}, B_{s+1}, 0, \eta_{s+1}, +)  > (\rho, A_{s}, B_{s+1}, 0, \eta_{s}, +) \\
& > \cdots > (\rho, A_{l}, B_{l+1}, 0, \eta_{l}, +) > (\rho, A_{l-1}, B_{l}, 0, (-1)^{B_{l} - B_{l-1}}\eta_{l-1}, +) > (\rho, B_{l} - 1, B_{l-1}, 0, \eta_{l-1}, +),
\end{align*}
where we have splitted the first and last ones. Note the last Jordan block above disappear when $B_{l} = B_{l-1}$. Then we can move $(\rho, B_{s+1}, B_{s+1}, 0, \eta_{s+1}, +)$ to the second last position (or last when $B_l = B_{l-1}$) above. By the change of order formula, it can be combined with the last term. So we get
\begin{align*}
& (\rho, A_{s+1}, B_{s+1} + 1, 0, -\eta_{s+1}, +) >  (\rho, A_{s}, B_{s+1}, 1, -\eta_{s}, +) \\
& > \cdots > (\rho, A_{l}, B_{l+1}, 1, -\eta_{l}, +) > (\rho, A_{l-1}, B_{l}, 1, -(-1)^{B_{l} - B_{l-1}}\eta_{l-1}, +) > (\rho, B_{l}, B_{l-1}, 0, \eta_{l-1}, +)
\end{align*}
Since $A_{l-1} > A_{l-2}$ and $B_{s + 2} > B_{s +1}$, we get by applying Lemma~\ref{lemma: step two}
\[
\pi^{\Sigma_{0}}_{M, >_{\psi}}(\q, \ul, \ueta) \hookrightarrow \times_{j = l-1}^{s} \langle B_{j+1}, \cdots, -A_{j} \rangle \rtimes \pi^{\Sigma_{0}}_{M, >_{\psi}}(\q', \ul, \ueta').
\] 
\end{proof}

By Lemma~\ref{lemma: step one} and the induction assumption, we have
\begin{align}
\label{eq: step one reduction}
& \pi^{\Sigma_{0}}_{M, >_{\psi}}(\q, \ul, \ueta) \hookrightarrow \times_{j = l-1}^{s} \langle B_{j+1}, \cdots, -A_{j} \rangle \rtimes \mathcal{I}'
\end{align}
and
\begin{align*}
\mathcal{I}' := \times_{i = 1}^{r} \, \times_{k_{i-1} < j < k_{i}} 
\underbrace{\begin{pmatrix}
              B'_{j+1}   & \cdots & - A'_{j}  \\
              \vdots &  & \vdots \\
              t_{j} - \delta_{j} & \cdots & -(t_{j} + \delta_{j}) 
\end{pmatrix}}_{\tilde{I}_{j}} \rtimes \, \sigma^{\Sigma_{0}}
\end{align*}
Moreover, $\mathcal{I}'$ is a subrepresentation of the costandard representation, obtained by taking induction of the shifted Steinberg representations from rows of $\tilde{I}_{j}$ together with $\sigma^{\Sigma_{0}}$. Combined with the maximality of $A_{s} - B_{s+1}$, we see the induction in \eqref{eq: step one reduction} is a subrepresentation of the costandard representation as we want. It also follows that the induction in \eqref{eq: step one reduction} has a unique irreducible subrepresentation. Since $\tilde
{I}_{j}$ are interchangeable with each other (cf. \cite[Corollary 4.3]{Xu:Comb}), one can combine $\langle B_{j+1}, \cdots, -A_{j} \rangle$ with $\tilde{I}_{j}$ for $l - 1 \leqslant j \leqslant s$, and this gives \eqref{eq: step one}.

\section{Step Two}
\label{sec: step two}

We will settle the special case in this step, hence complete the proof of Theorem~\ref{thm: special case}. Let
\[
\q = \oplus_{i = 1}^{n} (\rho \otimes \nu_{a_{i}} \otimes \nu_{b_{i}})
\]
where $A_{i} \geqslant A_{i-1}, B_{i} \geqslant B_{i-1}$ and $\zeta_{i} = +$. We fix the order so that
\[
(\rho, A_{i}, B_{i}, \zeta_{i}) >_{\q} (\rho, A_{i-1}, B_{i-1}, \zeta_{i-1}).
\]
By Theorem~\ref{thm: nonvanishing}, $\pi^{\Sigma_{0}}_{M, >_{\psi}}(\q, \ul, \ueta) \neq 0$ if and only if \eqref{eq: nonvanishing 1} is satisfied. We would like to prove the following theorem.

\begin{theorem}
\label{thm: step two}
\begin{align}
\label{eq: step two}
\pi^{\Sigma_{0}}_{M, >_{\psi}}(\q, \ul, \ueta) \hookrightarrow \times_{i} \underbrace{\begin{pmatrix}
              B_{i} & \cdots & -A_{i} \\
              \vdots &  & \vdots \\
              B_{i} + l_{i} - 1 & \cdots & -(A_{i} - l_{i} + 1) \end{pmatrix}}_{I_{i}} \rtimes \pi^{\Sigma_{0}}_{M, >_{\psi}} \Big(\cup_{i}(\rho, A_{i} - l_{i}, B_{i} + l_{i}, 0, \eta_{i}, \zeta_{i})\Big)
\end{align}
as the unique irreducible subrepresentation. Moreover, after applying \eqref{eq: step one} to 
\begin{align}
\label{eq: step two 1}
\pi^{\Sigma_{0}}_{M, >_{\psi}} \Big(\cup_{i}(\rho, A_{i} - l_{i}, B_{i} + l_{i}, 0, \eta_{i}, \zeta_{i})\Big),
\end{align}
we can embed the right hand side of \eqref{eq: step two} into an induced representation $\mathcal{I}$. Then $\mathcal{I}$ is a subrepresentation of the costandard representation, obtained by taking induction of the shifted Steinberg representations from the shifted Speh representations with a tempered representation $\sigma^{\Sigma_{0}}$ as in Theorem~\ref{thm: step one}.
\end{theorem}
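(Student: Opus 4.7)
The strategy is induction on $N(\ul) := \sum_{i=1}^{n} l_{i}$. The base case $N(\ul) = 0$ is precisely Theorem~\ref{thm: step one}. For the inductive step with $N(\ul) \geqslant 1$, the plan is to choose an index $i_{0}$ with $l_{i_{0}} \geqslant 1$ and to peel off the innermost row $\langle B_{i_{0}}, \ldots, -A_{i_{0}} \rangle$ of the shifted Speh representation $I_{i_{0}}$, reducing $N(\ul)$ by one in a manner that preserves the structure of the special case.

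Let $\q'$ be the parameter obtained from $\q$ by replacing $(\rho, A_{i_{0}}, B_{i_{0}}, +)$ with $(\rho, A_{i_{0}} - 1, B_{i_{0}} + 1, +)$ (removing the block when $A_{i_{0}} \leqslant B_{i_{0}} + 1$), and set $\ul' = \ul - e_{i_{0}}$. When $i_{0}$ is chosen so that this substitution still satisfies the special-case ordering $A'_{i} \geqslant A'_{i-1}$, $B'_{i} \geqslant B'_{i-1}$ (which may require first re-sorting via the change-of-order formulas of Section~\ref{sec: review}), the key intermediate step is to establish
\[
\r^{\Sigma_{0}}_{M, >_{\q}}(\q, \ul, \ueta) \hookrightarrow \langle B_{i_{0}}, \ldots, -A_{i_{0}} \rangle \rtimes \r^{\Sigma_{0}}_{M, >_{\q}}(\q', \ul', \ueta).
\]
This is proved via the dominating-parameter machinery: one passes to $\q_{\gg}$ with $T_{i} \gg 0$ so that the intervals $[B_{i} + T_{i}, A_{i} + T_{i}]$ are pairwise disjoint, whence $\q_{\gg}$ has discrete diagonal restriction. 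For such $\q_{\gg}$ the Jordan blocks decouple and \eqref{eq: Moeglin} applied at index $i_{0}$ furnishes the analogous embedding. Descending to $\q$ via \eqref{eq: push back} and commuting the Jac operators $\Jac_{(\rho, A_{i} + T_{i}, B_{i} + T_{i}, +) \mapsto (\rho, A_{i}, B_{i}, +)}$ past the peeled Steinberg segment then yields the claimed intermediate embedding.

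Applying the inductive hypothesis to $\r^{\Sigma_{0}}_{M, >_{\q}}(\q', \ul', \ueta)$ produces an embedding into $\times_{i} I'_{i} \rtimes \sigma^{\Sigma_{0}}$, where $I'_{i_{0}}$ has one fewer row than $I_{i_{0}}$, every other $I'_{i} = I_{i}$, and the tempered factor $\sigma^{\Sigma_{0}}$ is the same for $\q$ and $\q'$ because the reduced blocks $(\rho, A_{i} - l_{i}, B_{i} + l_{i}, +)$ coincide. Re-attaching $\langle B_{i_{0}}, \ldots, -A_{i_{0}} \rangle$ to $I'_{i_{0}}$ reconstructs $I_{i_{0}}$, and the interchangeability of shifted Speh representations \cite[Corollary 4.3]{Xu:Comb} lets us arrange the factors in the order of \eqref{eq: step two}. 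The assertion that $\mathcal{I}$ sits inside the costandard then combines the costandard claim of Theorem~\ref{thm: step one} for $\sigma^{\Sigma_{0}}$ with the position of the newly re-attached row.

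The main obstacles I anticipate are: (a) choosing $i_{0}$ and the corresponding $\q'$ so that $\q'$ still fits the special-case framework, possibly after a preliminary re-sorting using the change-of-order formula; (b) verifying that the nonvanishing criterion \eqref{eq: nonvanishing 1} transfers from $(\q, \ul, \ueta)$ to $(\q', \ul', \ueta)$, so the induction is nontrivial; and (c) controlling the costandard embedding across the induction, which requires the peeled row to fall in the correct position among all the rows generated inductively. The sign data is entirely dictated by \eqref{eq: Moeglin} and the change-of-order transitions of Section~\ref{sec: review}, so no genuinely new sign bookkeeping is introduced beyond what is already encoded in the inductive hypothesis.
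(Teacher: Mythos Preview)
Your overall strategy---induction on $\sum_{i} l_{i}$, peeling a row of some $I_{i_{0}}$, passing to a dominating parameter with discrete diagonal restriction, then descending via $\Jac$ and re-attaching---is exactly the paper's strategy. The base case and the role of Theorem~\ref{thm: step one} are also as you describe.

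The principal difference is in the choice of $i_{0}$. The paper does not pick a single index and then re-sort; instead it selects $s$ maximal among indices with $l_{i} \neq 0$ subject to $A_{s} - B_{s}$ being maximal, locates the block $l \leqslant i \leqslant s$ with $A_{i} = A_{s}$ and $B_{i} = B_{s}$, and peels \emph{all} rows $\langle B_{i}, \ldots, -A_{i}\rangle$ for $l \leqslant i \leqslant s$ simultaneously (Lemma~\ref{lemma: step two}). This specific choice guarantees $A_{l} > A_{l-1}$ and $B_{s+1} > B_{s}$, so the resulting $\q'$ automatically remains in the special case without invoking the change-of-order formula---your obstacle~(a) evaporates. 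More importantly, the maximality of $A_{s} - B_{s}$ is precisely what makes the costandard verification work: any shifted Steinberg $\langle x, \ldots, -y\rangle$ from $II_{i}$ or $\tilde{I}_{j}$ with smaller shift than $\langle B_{s}, \ldots, -A_{s}\rangle$ is forced to satisfy $y \geqslant A_{s}$ (case analysis via $A_{s} \leqslant A_{j}$ or $B_{s} \geqslant B_{j+1}$, using $l_{j} = l_{j+1} = 0$), hence is interchangeable. Without this extremal choice your obstacle~(c) is a genuine gap rather than a bookkeeping nuisance. The dominating-parameter argument in Lemma~\ref{lemma: step two} also uses $A_{l} > A_{l-1}$ and $B_{s+1} > B_{s}$ to justify dualizing $I_{i}$ and commuting with $III_{j}$, so these inequalities are load-bearing throughout.
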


The following corollary is an immediate consequence of the theorem.

\begin{corollary}
In the notations of Theorem~\ref{thm: step two}, the complete Langlands parameter $(\phi, \epsilon)$ of $\pi^{\Sigma_{0}}_{M, >_{\psi}}(\q, \ul, \ueta)$ is given as follows:
\[
\phi = (\oplus_{i} \phi_{i}) \oplus \phi' \oplus (\oplus_{i}  \phi_{i}^{\vee}) 
\]
where $\phi_{i}$ is the Langlands parameter of $I_{i}$, $(\phi', \epsilon')$ is the complete Langlands parameter of \eqref{eq: step two 1}, and $\epsilon$ corresponds to $\epsilon'$ under the natural isomorphism
\(
\mathcal{S}^{\Sigma_{0}}_{\phi} \cong \mathcal{S}^{\Sigma_{0}}_{\phi'}.
\)
\end{corollary}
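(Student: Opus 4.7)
The plan is to extract $(\phi, \epsilon)$ directly from the embedding established in Theorem~\ref{thm: step two}, by invoking the compatibility of Arthur's local Langlands correspondence with parabolic induction through a maximal Levi having a $GL$-component. This compatibility is built into \cite{Arthur:2013}, extended to the $\theta_{0}$-twisted setting in \cite[Theorem~4.3]{Xu:Apacket}.

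By Theorem~\ref{thm: step two}, $\pi := \pi^{\Sigma_{0}}_{M, >_{\psi}}(\q, \ul, \ueta)$ is the unique irreducible subrepresentation of
\[
\mathcal{I}_{0} := I_{1} \times \cdots \times I_{n} \rtimes \pi(\phi', \epsilon'),
\]
where $\pi(\phi', \epsilon')$ is the tempered representation in \eqref{eq: step two 1}. After composing with the embedding of $\pi(\phi', \epsilon')$ produced by Theorem~\ref{thm: step one}, the further-refined induction $\mathcal{I}$ lies inside the honest costandard representation built from the shifted Steinberg rows of the $I_{i}$ and $\tilde{I}_{j}$ together with the truly tempered $\sigma^{\Sigma_{0}}$. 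The constraint $l_{i} \leqslant [(A_{i} - B_{i} + 1)/2]$ forces the central exponent $(B_{i} - A_{i} + l_{i} - 1)/2$ of each shifted Speh $I_{i}$ to be strictly negative, so that $\mathcal{I}_{0}$ itself is a (generalized) costandard module with tempered inducing datum $\pi(\phi', \epsilon')$ on the $G'$-factor and shifted Speh factors $I_{i}$ on the $GL$-components.

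Next, I will peel off the $GL$-factors one at a time. The key input is the following single-step statement: for an irreducible representation $\sigma_{GL}$ of a $GL$-component of a maximal Levi of $G^{\Sigma_{0}}$ with Langlands parameter $\phi_{GL}$, and any irreducible representation $\tau$ of $G^{\Sigma_{0}}(F)$ that arises as the unique irreducible subrepresentation of a costandard module $\sigma_{GL} \rtimes \pi^{\Sigma_{0}}(\phi'', \epsilon'')$, one has $\tau = \pi^{\Sigma_{0}}(\phi, \epsilon)$ with $\phi = \phi_{GL} \oplus \phi'' \oplus \phi_{GL}^{\vee}$ and $\epsilon$ corresponding to $\epsilon''$ under the natural isomorphism $\mathcal{S}_{\phi} \cong \mathcal{S}_{\phi''}$; the latter is an isomorphism because the pair $\phi_{GL} \oplus \phi_{GL}^{\vee}$ contributes only a $GL$-type factor to $Z_{\D{G}}(\phi)$, which is connected, and so does not enlarge the component group. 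Iterating this with $\sigma_{GL} = I_{1}, \ldots, I_{n}$, and observing that the Langlands parameter $\phi_{i}$ of $I_{i}$ is the direct sum of the Langlands parameters of the shifted Steinberg representations appearing in its rows (via the LLC for general linear groups), one arrives at the asserted decomposition
\[
\phi = (\oplus_{i} \phi_{i}) \oplus \phi' \oplus (\oplus_{i} \phi_{i}^{\vee}),
\]
with $\epsilon$ transferred to $\epsilon'$ under the composition of the natural isomorphisms $\mathcal{S}_{\phi} \cong \mathcal{S}_{\phi'}$.

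The most delicate point is the identification $\epsilon \leftrightarrow \epsilon'$, which rests on Arthur's normalization of the LLC character being compatible with parabolic induction from a maximal parabolic having a $GL$-component. In the untwisted case this is implicit in the endoscopic character identities of \cite{Arthur:2013}; in the $\theta_{0}$-twisted case it follows from \cite[Theorem~4.3]{Xu:Apacket} together with the compatibility of $\theta_{0}$-twisted parabolic induction through a $GL$-factor with $\theta_{0}$-twisted endoscopic transfer. Once these standard compatibilities are assembled, the corollary is immediate from Theorem~\ref{thm: step two}.
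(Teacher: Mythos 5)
Your approach is essentially the paper's: the corollary is read off from the fact (established in Theorem~\ref{thm: step two}) that $\pi^{\Sigma_{0}}_{M,>_{\psi}}(\q,\ul,\ueta)$ sits as the unique irreducible subrepresentation of a module that embeds further into a genuine costandard representation, and the Langlands classification then assigns the stated parameter, with the component group unaffected by the $GL$-type factors. The paper treats this as immediate, and your writeup is a correct elaboration of that idea. Two remarks. First, you twice describe $\pi(\phi',\epsilon')$, the representation in \eqref{eq: step two 1}, as tempered; it is not in general, since its Arthur parameter $\cup_i(\rho, A_i-l_i, B_i+l_i,\zeta_i)$ has nontrivial second $SL(2,\mathbb{C})$ whenever some $l_i < (A_i-B_i)/2$. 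The truly tempered datum is $\sigma^{\Sigma_0}$ coming from Theorem~\ref{thm: step one}, which you correctly invoke later, so this slip does not propagate, but the phrase ``tempered inducing datum $\pi(\phi',\epsilon')$'' is wrong as stated. Second, the iterated ``single-step peeling'' is a detour: as stated, the single-step lemma presupposes that $\sigma_{GL}\rtimes\pi^{\Sigma_0}(\phi'',\epsilon'')$ is a costandard module, which fails at intermediate stages where the classical-group factor is not tempered, so the iteration is not directly licensed. What actually validates the conclusion is precisely the last part of Theorem~\ref{thm: step two} --- that the full $\mathcal{I}$ lands inside the honest costandard representation with tempered piece $\sigma^{\Sigma_0}$ --- after which one reads off $(\phi,\epsilon)$ in one stroke and reorganizes the summands to exhibit $\phi = (\oplus_i\phi_i)\oplus\phi'\oplus(\oplus_i\phi_i^{\vee})$. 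Finally, your argument that $\mathcal{S}^{\Sigma_0}_{\phi}\cong\mathcal{S}^{\Sigma_0}_{\phi'}$ via connected $GL$-centralizers is correct, but it relies on the $\phi_i$ not being self-dual; this is ensured by your observation that each $I_i$ has strictly negative central exponent, and it would be worth making that dependence explicit.
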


We will prove Theorem~\ref{thm: step two} by induction on 
\(
\sum_{i=1}l_{i}.
\)
Among all $i$ such that $l_{i} \neq 0$, let us choose maximal $s$ for the property that $A_{s} - B_{s} \geqslant A_{i} - B_{i}$ for any such $i$. By the maximality of $s$, we have $B_{s + 1} > B_{s}$ or $s = n$. Moreover, there exists $l \leqslant s$ such that
\[
A_{s} = \cdots = A_{l} \text{ and } B_{s} = \cdots = B_{l}
\]
and $A_{l} > A_{l-1}$ or $l = 1$.

\begin{lemma}
\label{lemma: step two}
\[
\pi^{\Sigma_{0}}_{M, >_{\psi}}(\q, \ul, \ueta) \hookrightarrow \times_{i = l}^{s} \langle B_{i}, \cdots, -A_{i} \rangle \rtimes \pi^{\Sigma_{0}}_{M, >_{\psi}}(\q', \ul', \ueta),
\] 
where $A'_{i} = A_{i} - 1$, $B'_{i} = B_{i} + 1$, and $l'_{i} = l_{i} - 1$ for $l \leqslant i \leqslant s$.
\end{lemma}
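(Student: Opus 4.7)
My plan is to prove this lemma by a dominance-and-descent argument within the framework of Section~\ref{sec: review}. I would first shift each Jordan block $(\rho, A_{i}, B_{i}, +)$ to $(\rho, A_{i} + T_{i}, B_{i} + T_{i}, +)$ with $T_{i}$ chosen large enough so that the dominating parameter $\q_{\gg}$ has discrete diagonal restriction, establish the embedding cleanly at the level of $\q_{\gg}$, and then descend back to $\q$ via the Jacquet module formula \eqref{eq: push back}.

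A preliminary observation is that the nonvanishing condition \eqref{eq: nonvanishing 1}, applied to adjacent indices $i, i+1 \in [l, s]$ with $A_{i} = A_{i+1}$ and $B_{i} = B_{i+1}$, forces $l_{i} = l_{i+1}$, so $l_{l} = \cdots = l_{s}$ and the assignment $l'_{i} = l_{i} - 1 \geqslant 0$ is well defined on this range. A direct check also shows that $(\q', \ul', \ueta)$ still satisfies \eqref{eq: nonvanishing 1}, so the right hand side of the claimed embedding is nonzero.

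For $\q_{\gg}$, M{\oe}glin's result \cite[Theorem 4.2]{Moeglin:2009} applied block by block, combined with the shifted Speh commutation of \cite[Corollary 4.3]{Xu:Comb}, exhibits $\pi^{\Sigma_{0}}_{M, >_{\psi}}(\q_{\gg}, \ul, \ueta)$ as the unique irreducible subrepresentation of an induced representation whose inducing data consists of the shifted Speh representation associated to each Jordan block of $\q_{\gg}$, together with a tempered representation $\sigma^{\Sigma_{0}}$. Peeling off the top row $\langle B_{i} + T_{i}, \cdots, -(A_{i} + T_{i}) \rangle$ of each shifted Speh representation for $i \in [l, s]$---using that such a representation is tautologically the unique irreducible subrepresentation of $\langle B, \cdots, -A \rangle$ induced with the shifted Speh representation for $(\rho, A - 1, B + 1, +)$ of depth $l - 1$---and commuting these Steinberg factors to the left past the remaining shifted Speh representations via \cite[Corollary 4.3]{Xu:Comb}, yields the desired embedding at the level of $\q_{\gg}$.

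To finish, I would apply the Jacquet operators $\Jac_{(\rho, A_{i} + T_{i}, B_{i} + T_{i}, +) \mapsto (\rho, A_{i}, B_{i}, +)}$ in decreasing order of $i$ as prescribed by \eqref{eq: push back}; the Geometric Lemma then shows that these simultaneously shrink each outer Steinberg $\langle B_{i} + T_{i}, \cdots, -(A_{i} + T_{i}) \rangle$ down to $\langle B_{i}, \cdots, -A_{i} \rangle$ and shrink the inner parameter $\q'_{\gg}$ back to $\q'$, giving the claimed embedding for $\q$. The main obstacle is ruling out spurious cross-terms between the outer Steinberg factors and the inner induced representation during this Jacquet module computation, so that the unique irreducible subrepresentation property is preserved throughout the descent. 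The maximality of $s$ (so $B_{s+1} > B_{s}$ or $s = n$) and of $l$ (so $A_{l} > A_{l-1}$ or $l = 1$) is precisely what isolates the blocks in $[l, s]$ from the remaining ones and, combined with \cite[Corollary 4.3]{Xu:Comb}, makes these commutations and cancellations work cleanly.
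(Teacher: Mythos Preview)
Your overall strategy---pass to a dominating parameter with discrete diagonal restriction, establish the embedding there, then descend via the Jacquet operators of \eqref{eq: push back}---matches the paper's. The gap is in the descent step. After peeling the top row of the Speh for $i\in[l,s]$, the front factor is $\langle B_{i}+T_{i},\cdots,-(A_{i}+T_{i})\rangle$, and you assert that applying $\Jac_{X_{i}}$ ``simultaneously shrinks'' this to $\langle B_{i},\cdots,-A_{i}\rangle$ while descending the inner parameter. But $\Jac_{y}$, acting on a Steinberg $\langle x,\cdots,-z\rangle$ from the left, only removes the top exponent $x$; it cannot raise the bottom $-z$. So there is no direct way for the composite $\Jac_{X_{i}}$ to convert $\langle B_{i}+T_{i},\cdots,-(A_{i}+T_{i})\rangle$ into $\langle B_{i},\cdots,-A_{i}\rangle$: the matrix $X_{i}$ has $(A_{i}-B_{i}+1)T_{i}$ entries, whereas shrinking the Steinberg on both ends would account for only $2T_{i}$ of them, and the remaining entries do not line up with the descent matrix $X'_{i}$ needed to take $(\rho,A_{i}+T_{i}-1,B_{i}+T_{i}+1,+)$ down to $(\rho,A_{i}-1,B_{i}+1,+)$. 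The Geometric Lemma does supply a ``dual-side'' term, but exploiting it to raise the bottom is exactly the nontrivial step you have not carried out.

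The paper fills this gap with a specific manipulation that you are missing. It uses a staged dominance ($\q_{\gg}^{(l)}$ shifting only $i\geqslant l$, then $\q_{\gg}^{(s+1)}$ shifting only $i>s$), and at the first stage splits
\[
\langle B_{i}+T_{i},\cdots,-(A_{i}+T_{i})\rangle \hookrightarrow \langle B_{i}+T_{i},\cdots,-A_{i}\rangle \times \underbrace{\langle -(A_{i}+1),\cdots,-(A_{i}+T_{i})\rangle}_{I_{i}}.
\]
It then commutes $I_{i}$ past the shift matrices, takes the dual $I_{i}^{\vee}=\langle A_{i}+T_{i},\cdots,A_{i}+1\rangle$ (this is where $A_{l}>A_{l-1}$ enters, via \cite[Proposition~4.6]{Xu:Comb}), and merges $I_{i}^{\vee}$ with the shift matrix into an enlarged one---the merger being forced because otherwise $\Jac_{A_{i}+T_{i}}\pi^{\Sigma_{0}}_{M,>_{\psi}}(\q_{\gg}^{(l)},\ul,\ueta)\neq 0$, which is impossible. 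After this, the front factor is $\langle B_{i}+T_{i},\cdots,-A_{i}\rangle$ (shifted only on top), and $\Jac_{X_{i}}$ now acts cleanly: each column of $X_{i}$ peels one exponent from the Steinberg's top and one column from the enlarged shift matrix. The final passage from $\q_{\gg}^{(s+1)}$ to $\q$ then uses $B_{s+1}>B_{s}$ to commute $\langle B_{i},\cdots,-A_{i}\rangle$ past the remaining shift matrices $III_{j}$ for $j>s$. You should incorporate this dual-and-merge step; without it the descent does not go through.
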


\begin{proof}
Let $\q_{\gg}^{(l)}$ be a dominating parameter of $\q$ such that the Jordan blocks for $i<l$ remains the same, and the Jordan blocks for $i \geqslant l$ are shifted by $T_{i}$, so that they are disjoint and far away from $\cup_{i<l}\{(\rho, A_{i}, B_{i}, \zeta_{i})\}$. Similarly, we can define $\q_{\gg}^{(s+1)}$ (resp. $\q_{\gg}^{'(s+1)}$).
\begin{align*}
\pi^{\Sigma_{0}}_{M, >_{\psi}}(\q_{\gg}^{(l)}, \ul, \ueta) \hookrightarrow & \times_{i = l}^{s} \langle B_{i} + T_{i}, \cdots, -(A_{i} + T_{i}) \rangle \times \\
&  \times_{i = l}^{s} \begin{pmatrix}
              B_{i} +T_{i} + 1 & \cdots & B_{i} + 2 \\
              \vdots &  & \vdots \\
              A_{i} + T_{i} - 1 & \cdots & A_{i} \end{pmatrix} \rtimes \pi^{\Sigma_{0}}_{M, >_{\psi}}(\q_{\gg}^{'(s+1)}, \ul', \ueta) \\
\hookrightarrow &  \times_{i = l}^{s} \Big(\langle B_{i} + T_{i}, \cdots, -A_{i} \rangle \times \underbrace{\langle -(A_{i} + 1), \cdots, -(A_{i} + T_{i}) \rangle}_{I_{i}} \Big) \times \\
&  \times_{i = l}^{s} \underbrace{\begin{pmatrix}
              B_{i} +T_{i} + 1 & \cdots & B_{i} + 2 \\
              \vdots &  & \vdots \\
              A_{i} + T_{i} - 1 & \cdots & A_{i} \end{pmatrix}}_{II_{i}} \rtimes \, \pi^{\Sigma_{0}}_{M, >_{\psi}}(\q_{\gg}^{'(s+1)}, \ul', \ueta) 
\end{align*}
We can switch $I_{i}$ with $II_{j}$ (cf. \cite[Corollary 4.3]{Xu:Comb}). Since $A_{l} > A_{l-1}$, we can then take the dual of $I_{i}$ (cf. \cite[Proposition 4.6]{Xu:Comb}). Moreover, we can combine $II_{i}$ with $I^{\vee}_{i}$, for otherwise, ${\rm Jac}_{A_{i} + T_{i}} \, \pi^{\Sigma_{0}}_{M, >_{\psi}}(\q_{\gg}^{(l)}, \ul, \ueta) \neq 0$, which is impossible. Therefore, we get
\begin{align*}
\pi^{\Sigma_{0}}_{M, >_{\psi}}(\q_{\gg}^{(l)}, \ul, \ueta) \hookrightarrow &  \times_{i = l}^{s} \langle B_{i} + T_{i}, \cdots, -A_{i} \rangle \times \\
&  \times_{i = l}^{s} \begin{pmatrix}
              B_{i} +T_{i} + 1 & \cdots & B_{i} + 2 \\
              \vdots &  & \vdots \\
              A_{i} + T_{i}  & \cdots & A_{i} + 1 \end{pmatrix} \rtimes \pi^{\Sigma_{0}}_{M, >_{\psi}}(\q_{\gg}^{'(s+1)}, \ul', \ueta) \\
\hookrightarrow & \times_{i = l}^{s} \Big( \langle B_{i} + T_{i}, \cdots, -A_{i} \rangle \times  \begin{pmatrix}
              B_{i} +T_{i} + 1 & \cdots & B_{i} + 2 \\
              \vdots &  & \vdots \\
              A_{i} + T_{i}  & \cdots & A_{i} + 1 \end{pmatrix}\Big) \rtimes \pi^{\Sigma_{0}}_{M, >_{\psi}}(\q_{\gg}^{'(s+1)}, \ul', \ueta)
\end{align*}
It follows
\begin{align*}
\pi^{\Sigma_{0}}_{M, >_{\psi}}(\q_{\gg}^{(s+1)}, \ul, \ueta) \hookrightarrow &  \times_{i = l}^{s} \langle B_{i}, \cdots, -A_{i} \rangle \rtimes \pi^{\Sigma_{0}}_{M, >_{\psi}}(\q_{\gg}^{'(s+1)}, \ul', \ueta) \\ \hookrightarrow &  \times_{i = l}^{s} \langle B_{i}, \cdots, -A_{i} \rangle \times \Big( \times_{i > s} \underbrace{\begin{pmatrix}
              B_{i} +T_{i} & \cdots & B_{i} + 1 \\
              \vdots &  & \vdots \\
              A_{i} + T_{i}  & \cdots & A_{i} + 1 \end{pmatrix}}_{III_{i}} \Big) \rtimes \pi^{\Sigma_{0}}_{M, >_{\psi}}(\q', \ul', \ueta).
\end{align*}
Since $B_{s+1} > B_{s}$, we can switch $\langle B_{i}, \cdots, -A_{i} \rangle$ with $III_{j}$ (cf. \cite[Corollary 4.3]{Xu:Comb}). Hence, 
\[
\pi^{\Sigma_{0}}_{M, >_{\psi}}(\q, \ul, \ueta) \hookrightarrow \times_{i = l}^{s} \langle B_{i}, \cdots, -A_{i} \rangle \rtimes \pi^{\Sigma_{0}}_{M, >_{\psi}}(\q', \ul', \ueta).
\]

\end{proof}

By Lemma~\ref{lemma: step two} and the induction assumption, we have
\begin{align}
\label{eq: step two reduction}
\pi^{\Sigma_{0}}_{M, >_{\psi}}(\q, \ul, \ueta) \hookrightarrow & \times_{i = l}^{s} \langle B_{i}, \cdots, -A_{i} \rangle \rtimes \mathcal{I}'
\end{align}
and
\begin{align*}
\mathcal{I}' := & \times_{i = l}^{s} \underbrace{\begin{pmatrix}
              B_{i} + 1 & \cdots & -(A_{i} - 1) \\
              \vdots &  & \vdots \\
              B_{i} + l_{i} - 1 & \cdots & -(A_{i} - l_{i} + 1) \end{pmatrix}}_{II_{i}} \times 
\times_{i < l \text{ or } i> s} \underbrace{\begin{pmatrix}
              B_{i} & \cdots & -A_{i} \\
              \vdots &  & \vdots \\
              B_{i} + l_{i} - 1 & \cdots & -(A_{i} - l_{i} + 1) \end{pmatrix}}_{II_{i}}  \\
& \times \times_{\text{ some } j} \underbrace{\begin{pmatrix}
              B_{j+1} + l_{j+1} & \cdots & -(A_{j} - l_{j}) \\
              \vdots &  & \vdots \\
              t_{j} - \delta_{j} & \cdots & -(t_{j} + \delta_{j}) 
\end{pmatrix}}_{\tilde{I}_{j}}
\rtimes \, \sigma^{\Sigma_{0}}
\end{align*}
Moreover, $\mathcal{I}'$ is a subrepresentation of the costandard representation, obtained by taking induction of the shifted Steinberg representations from the shifted Speh representations with $\sigma^{\Sigma_{0}}$. We claim the induction in \eqref{eq: step two reduction} is a subrepresentation of the costandard representation as we want. 

To prove the claim, we need to show any shifted Steinberg representation above, whose shift is less than that of $\langle B_{s}, \cdots, -A_{s} \rangle$, can be moved to the front. By our choice of $s$, it suffices to consider $\langle x, \cdots, -y \rangle$ from rows of $\tilde{I}_{j}$. Moreover, it is necessary that $l_{j+1} = l_{j} = 0$. There are two cases.
\begin{enumerate}

\item If $A_{s} \leqslant A_{j}$, then $y \geqslant A_{s}$. 

\item If $B_{s} \geqslant B_{j+1}$, then $x \leqslant B_{s}$. 

\end{enumerate}
In either case, we see $\langle x, \cdots, -y \rangle$ and $\langle B_{s}, \cdots, -A_{s} \rangle$ are interchangeable (cf. \cite[Corollary 4.3]{Xu:Comb}). This finishes the proof of our claim. As a consequence, the induction in \eqref{eq: step two reduction} has a unique irreducible subrepresentation. So we can combine $\langle B_{i}, \cdots, -A_{i} \rangle$ with $II_{i}$ for $l \leqslant i \leqslant s$, and this gives \eqref{eq: step two}.

\section{Step Three}
\label{sec: step three}

In this step we will prove Theorem~\ref{thm: push integral} and Theorem~\ref{thm: push half-integral}, which reduce our problem to the special case settled in the previous step. In order to apply the induction argument, we need to generalize our problem (cf. \eqref{eq: fix b}) to the following case 
\[
\q = \oplus_{i = 1}^{n} (\rho \otimes \nu_{a_{i}} \otimes \nu_{b_{i}})
\]
where $A_{i} \geqslant A_{i-1}$ and there exists $m \leqslant n$ such that

\begin{itemize}

\item if $i > m$, then $\zeta_{i} = +$, $B_{i+1} \geqslant B_{i}$;

\item if $i \leqslant m$, then $\zeta_{i} = -$, and $B_{i} \leqslant B_{i-1}$.

\end{itemize}
We choose the order so that
\[
(\rho, A_{i}, B_{i}, \zeta_{i}) >_{\q} (\rho, A_{i-1}, B_{i-1}, \zeta_{i-1}).
\]
Among all $i \leqslant m$ such that $B_{i} \neq 0$ (resp. $1/2$), we choose $s \leqslant m$ maximal for the property that 
\begin{itemize}
\item $A_{s} \geqslant A_{i}$ for all such $i$;

\item $B_{s} \geqslant B_{i}$, if $A_{s} = A_{i}$ for any such $i$
\end{itemize}
By the maximality of $s$, we have $B_{s} > B_{s + 1}$ or $s = m$. Moreover, there exists $l \leqslant s$ such that
\[
A_{s} = \cdots = A_{l} \text{ and } B_{s} = \cdots = B_{l}
\]
and $A_{l} > A_{l-1}$ or $l = 1$.

\begin{lemma}
\label{lemma: step three}
There is a bijection between
\begin{align*}
\Pkt{\q}^{\Sigma_{0}} & \rightarrow \Pkt{\q^{s}}^{\Sigma_{0}} \\
\pi^{\Sigma_{0}}_{M, >_{\psi}}(\psi, \ul, \ueta) & \mapsto \pi^{\Sigma_{0}}_{M, >_{\psi}}(\psi^{s}, \ul, \ueta)
\end{align*}
such that 
\[
\pi^{\Sigma_{0}}_{M, >_{\psi}}(\psi, \ul, \ueta) \hookrightarrow \times_{i = l}^{s} \langle -B_{i}, \cdots, -A_{i} \rangle \rtimes \pi^{\Sigma_{0}}_{M, >_{\psi}}(\psi^{s}, \ul, \ueta),
\]
where $\psi^{s}$ is obtained from $\psi$ by changing $(\rho, A_{i}, B_{i}, \zeta_{i})$ to $(\rho, A_{i} - 1, B_{i} - 1, \zeta_{i})$ for $l \leqslant i \leqslant s$.
\end{lemma}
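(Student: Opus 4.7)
The strategy will mirror that of Lemma~\ref{lemma: step two}, but is in fact considerably simpler, because the parameters $\ul,\ueta$ do not change in passing from $\psi$ to $\psi^s$, and the whole statement reduces to a single application of the Jacquet-module embedding~\eqref{eq: push back}. Concretely, $\psi$ dominates $\psi^s$ via the uniform shifts $T_i=1$ for $l\leqslant i\leqslant s$ and $T_i=0$ otherwise, so the pair $(\psi^s,\psi)$ is directly in the framework of Section~\ref{sec: review}. When $T_i=1$, the matrix in~\eqref{eq: push back} collapses to a single column with entries $\zeta_iB_i,\zeta_i(B_i+1),\cdots,\zeta_iA_i$; for $\zeta_i=-$ this column is precisely the shifted Steinberg $\langle -B_i,\cdots,-A_i\rangle$, which yields the embedding in the statement exactly in the form displayed.

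Before invoking~\eqref{eq: push back} I will verify that $>_\psi$ descends to an admissible order on $Jord(\psi^s)$, i.e.\ that $\psi$ really is a valid dominating parameter of $\psi^s$. Comparisons among blocks in $[l,s]$ are vacuous because all their $(A_i,B_i)$ remain equal after the uniform shift; comparisons between a block $i<l$ and a block $j\in[l,s]$ rely on the strict inequality $A_l>A_{l-1}$ from the choice of $l$ (or $l=1$), which prevents any new strict ordering from failing when one subtracts one from $A_l,B_l$; and comparisons between a block $j\in[l,s]$ and a block $i>s$ involve opposite signs $\zeta_j=-\neq\zeta_i=+$, so no admissibility constraint applies.

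The remaining point, and what I consider the main obstacle, is upgrading the parameter-level identification $(\ul,\ueta)\mapsto(\ul,\ueta)$ to an honest bijection of packets: one must show $\pi^{\Sigma_0}_{M,>_\psi}(\psi,\ul,\ueta)\neq 0$ if and only if $\pi^{\Sigma_0}_{M,>_\psi}(\psi^s,\ul,\ueta)\neq 0$. The ``only if'' direction is immediate from~\eqref{eq: push back}, which presents $\pi^{\Sigma_0}_{M,>_\psi}(\psi^s,\ul,\ueta)$ as a specific iterated Jacquet module of $\pi^{\Sigma_0}_{M,>_\psi}(\psi,\ul,\ueta)$. For the converse I plan to choose an auxiliary parameter $\psi_{\gg}$ dominating both $\psi$ and $\psi^s$ and to apply Moeglin's Jacquet-module nonvanishing criterion (cf.\ Theorem~\ref{thm: nonvanishing} and the discussion in Section~\ref{sec: review}): the two nonvanishings are then controlled by Jacquet modules of the single representation $\pi^{\Sigma_0}_{M,>_\psi}(\psi_{\gg},\ul,\ueta)$ which differ only by whether one further strips the ``shell'' columns corresponding to $\langle -B_i,\cdots,-A_i\rangle$. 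The maximality of $s$, in particular the condition $B_s\neq 0$ (resp.\ $1/2$), is used precisely to rule out a self-dual degeneracy that would make this extra stripping vanish. Once these checks are in place, the displayed embedding together with the equivalence of nonvanishings gives the claimed bijection.
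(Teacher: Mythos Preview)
Your embedding argument is a genuine simplification of the paper's proof: once one checks that $>_\psi$ remains admissible on $Jord(\psi^s)$, the pair $(\psi^s,\psi)$ with $T_i=1$ for $l\leqslant i\leqslant s$ is a valid domination and~\eqref{eq: push back} gives the displayed embedding directly, the single-column matrix collapsing to $\langle -B_i,\cdots,-A_i\rangle$. The paper instead passes to a new order $>'_\psi$ moving the blocks in $[l,s]$ to the front, then to a far-away dominating parameter $\psi_{\gg}$, and only afterwards strips the extra rows; your route avoids that detour entirely.

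Two points need repair. First, your admissibility check is incomplete: you assert that for $j\in[l,s]$ and $i>s$ the signs differ, but this is only true for $i>m$; for $s<i\leqslant m$ both blocks have $\zeta=-$, and you must invoke $B_s>B_{s+1}$ (or $s=m$) to see that after the shift one still has $B_i\leqslant B_s-1$, so no admissibility constraint is violated.

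Second, and more seriously, you have the nonvanishing directions reversed. Formula~\eqref{eq: push back} expresses $\pi^{\Sigma_0}_{M,>_\psi}(\psi^s,\ul,\ueta)$ as a Jacquet module of $\pi^{\Sigma_0}_{M,>_\psi}(\psi,\ul,\ueta)$, so the implication that is immediate is $\pi(\psi^s,\ul,\ueta)\neq 0\Rightarrow\pi(\psi,\ul,\ueta)\neq 0$, i.e.\ the ``if'' direction, not the ``only if''. The hard direction is $\pi(\psi,\ul,\ueta)\neq 0\Rightarrow\pi(\psi^s,\ul,\ueta)\neq 0$, and your plan for it does not go through as stated: Theorem~\ref{thm: nonvanishing} applies to blocks with a common sign and the ordering hypotheses $A_{i+1}\geqslant A_i$, $B_{i+1}\geqslant B_i$, neither of which holds here. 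The paper handles this by invoking the reduction procedure of \cite[Section~8]{Xu:Comb} (pull/expand operations), first moving the positive-sign blocks far away and then observing that, because $A_l>A_{l-1}$ and $B_s>B_{s+1}$, the inclusion relations among the remaining intervals are unchanged by the shift $[B_i,A_i]\mapsto[B_i-1,A_i-1]$; the nonvanishing criterion from \cite{Xu:Comb} is then insensitive to this shift. Your sketch (``rule out a self-dual degeneracy'') gestures at the right ingredients but does not supply this argument.
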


\begin{proof}
We first show  
\[
\pi^{\Sigma_{0}}_{M, >_{\psi}}(\psi, \ul, \ueta) \neq 0 \Leftrightarrow \pi^{\Sigma_{0}}_{M, >_{\psi}}(\psi^{s}, \ul, \ueta) \neq 0.
\] 
Following the procedure in \cite[Section 8]{Xu:Comb}, we can first reduce to the case that all $(\rho, A_{i}, B_{i}, \zeta_{i})$ for $i > m$ are far away from $\cup_{i \leqslant m}\{(\rho, A_{i}, B_{i}, \zeta_{i})\}$, except for one $(\rho, A_{j}, B_{j}, \zeta_{j})$. This is done by the operations of ``pull" and ``expand" (cf. \cite[Section 7.1, 7.2]{Xu:Comb}). Since $A_{j} \geqslant A_{i}$ for all $i \leqslant m$, then we can ``expand" $[B_{j}, A_{j}]$ and change the sign $\zeta_{j}$ to negative (cf. \cite[Section 7.3]{Xu:Comb}). In this way, we can further reduce to the case that all $(\rho, A_{i}, B_{i}, \zeta_{i})$ for $i > m$ are far away from $\cup_{i \leqslant m}\{(\rho, A_{i}, B_{i}, \zeta_{i})\}$. Since $A_{l} > A_{l-1}$ and $B_{s} > B_{s+1}$, the inclusion relations of intervals are not changed after shifting $[B_{i}, A_{i}]$ to $[B_{i} - 1, A_{i} - 1]$ for $l \leqslant i \leqslant s$. Then it is not hard to see from the procedure in \cite[Section 8]{Xu:Comb} again that the nonvanishing condition is not changed.

Next we impose a new order $>'_{\psi}$ by moving $\{ (\rho, A_{i}, B_{i}, \zeta_{i}) \}_{i = l}^{s}$ to the front. Suppose  
\[
\pi^{\Sigma_{0}}_{M, >_{\psi}}(\psi, \ul, \ueta) = \pi^{\Sigma_{0}}_{M, >'_{\psi}}(\psi, \ul', \ueta'),
\]
then 
\[
\pi^{\Sigma_{0}}_{M, >_{\psi}}(\psi^{s}, \ul, \ueta) = \pi^{\Sigma_{0}}_{M, >'_{\psi}}(\psi^{s}, \ul', \ueta')
\]
by the change of order formula. So it suffices to prove the lemma under this new order. Let $\psi_{\gg}$ be the parameter obtained by shifting $[B_{i}, A_{i}]$ to $[B_{i} + T_{i}, A_{i} + T_{i}]$ for $l \leqslant i \leqslant s$, which are disjoint and far away from the rest. Then
\begin{align*}
\pi^{\Sigma_{0}}_{M, >'_{\psi}}(\psi_{\gg}, \ul', \ueta')  \hookrightarrow  \times_{i = l}^{s} \begin{pmatrix}
              -(B_{i} + T_{i}) & \cdots &  -(A_{i} + T_{i}) \\
              \vdots &  & \vdots \\
              -B_{i} & \cdots & -A_{i} \end{pmatrix} \rtimes \pi^{\Sigma_{0}}_{M, >'_{\psi}}(\psi^{s}, \ul', \ueta'),
\end{align*}
where $i$ increases. It follows 
\begin{align*}
\pi^{\Sigma_{0}}_{M, >'_{\psi}}(\psi_{\gg}, \ul', \ueta') & \hookrightarrow \begin{pmatrix}
              -(B_{l} + T_{l}) & \cdots & -(A_{l} + T_{l}) \\
              \vdots &  & \vdots \\
              -(B_{l} + 1) & \cdots & -(A_{l} + 1) \end{pmatrix} \times
\langle -B_{l}, \cdots, -A_{l} \rangle \times \\
& \times_{i = l+1}^{s} \begin{pmatrix}
              -(B_{i} + T_{i}) & \cdots & -(A_{i} + T_{i}) \\
              \vdots &  & \vdots \\
               -B_{i} & \cdots & -A_{i} \end{pmatrix} \rtimes \pi^{\Sigma_{0}}_{M, >'_{\q}}(\psi^{s}, \ul', \ueta') \\
& \hookrightarrow \begin{pmatrix}
              -(B_{l} + T_{l}) & \cdots & -(A_{l} + T_{l}) \\
              \vdots &  & \vdots \\
               -(B_{l} + 1) & \cdots & -(A_{l} + 1) \end{pmatrix} \times \\
& \times_{i = l+1}^{s} \begin{pmatrix}
              -(B_{i} + T_{i}) & \cdots &  -(A_{i} + T_{i}) \\
              \vdots &  & \vdots \\
               -B_{i} & \cdots & -A_{i} \end{pmatrix} \times 
\langle -B_{l}, \cdots, -A_{l} \rangle \rtimes \pi^{\Sigma_{0}}_{M, >'_{\psi}}(\psi^{s}, \ul', \ueta')
\end{align*}
Continuing this way, we should get
\begin{align*}
\pi^{\Sigma_{0}}_{M, >'_{\psi}}(\psi_{\gg}, \ul', \ueta') \hookrightarrow & \times_{i=l}^{s} \begin{pmatrix}
              -(B_{i} + T_{i}) & \cdots & -(A_{i} + T_{i}) \\
              \vdots &  & \vdots \\
              -(B_{i} + 1) & \cdots & -(A_{i} + 1) \end{pmatrix} \times \\
& \times_{i = l}^{s}
\langle -B_{i}, \cdots, -A_{i} \rangle \rtimes\pi^{\Sigma_{0}}_{M, >'_{\psi}}(\psi^{s}, \ul', \ueta')
\end{align*}
It follows
\[
\pi^{\Sigma_{0}}_{M, >'_{\psi}}(\psi, \ul', \ueta') \hookrightarrow \times_{i = l}^{s} \langle -B_{i}, \cdots, -A_{i} \rangle \rtimes \pi^{\Sigma_{0}}_{M, >'_{\psi}}(\psi^{s}, \ul', \ueta').
\]
This finishes the proof.

\end{proof}

\begin{remark}
In this lemma, it is critical to have $A_{j} \geqslant A_{i}$ for $\zeta_{j} = +, \zeta_{i} = -$. Here we give a counter-example when this condition is not satisfied. Suppose
\[
Jord(\psi) = \{(\rho, A_{1}, B_{1}, \zeta_{1}), (\rho, A_{2}, B_{2}, \zeta_{2})\}
\]
with $A_{i}, B_{i} \in \mathbb{Z}$, $\zeta_{1} = -$, $\zeta_{2} = +$, and
\(
A_{1} > B_{1} + B_{2} > A_{2} > A_{1} - B_{1}.
\)
Let 
\[
Jord(\psi') = \{(\rho, A_{1} - B_{1}, 0, \zeta_{1}), (\rho, A_{2}, B_{2}, \zeta_{2})\}.
\] 
We claim $|\Pkt{\psi}^{\Sigma_{0}}| > |\Pkt{\psi'}^{\Sigma_{0}}|$, which will result in a contradiction to Lemma~\ref{lemma: step three}. After changing $\zeta_{1}$ to positive, we can apply Theorem~\ref{thm: special case} to $\psi'$. Then there is a bijection
\[
\Pkt{\psi'}^{\Sigma_{0}} \rightarrow \{(\ul, \ueta) \in \mathbb{Z}^{2} \times \{\pm 1\}^{2} \, | \, 0 \leqslant l_{i} \leqslant [(A_{i} - B_{i} +1)/2],
\text{ \eqref{eq: nonvanishing 1} and \eqref{eq: sign condition} are satisfied }\} / \sim_{\Sigma_{0}}
\]
where $\ul = (l_{i})$, $\ueta = (\eta_{i})$. By our assumption, $[0, A_{1} - B_{1}]$ intersects with $[A_{2}, B_{2}]$, so the condition \eqref{eq: nonvanishing 1} is not void. On the other hand, let
\[
Jord(\psi'') = \{(\rho, A_{1}, B_{1}, \zeta_{1}), (\rho, A_{2} - B_{2}, 0, \zeta_{2})\}
\] 
By applying $\Jac_{(\rho, A_{2}, B_{2}, \zeta_{2}) \mapsto (\rho, A_{2} - B_{2}, 0, \zeta_{2})}$, we get a surjection
\begin{align}
\label{eq: counterexample}
\Pkt{\psi}^{\Sigma_{0}} \rightarrow \Pkt{\psi''}^{\Sigma_{0}}
\end{align}
By our assumption, $[0, A_{2} - B_{2}]$ does not intersect with $[A_{1}, B_{1}]$. So by \cite[Theorem 4.2]{Moeglin:2009}, there is a bijection
\[
\Pkt{\psi''}^{\Sigma_{0}} \rightarrow \{(\ul, \ueta) \in \mathbb{Z}^{2} \times \{\pm 1\}^{2} \, | \, 0 \leqslant l_{i} \leqslant [(A_{i} - B_{i} +1)/2],
\text{ \eqref{eq: sign condition} is satisfied }\} / \sim_{\Sigma_{0}}
\]
Hence, $|\Pkt{\psi}^{\Sigma_{0}}| \geqslant |\Pkt{\psi''}^{\Sigma_{0}}| > |\Pkt{\psi'}^{\Sigma_{0}}|$. From \eqref{eq: parametrization}, we also see \eqref{eq: counterexample} is actually a bijection.

\end{remark}

\subsection{Integral case}

We assume $A_{i}, B_{i} \in \mathbb{Z}$. Recall $\zeta_{i} = +$ for $i > m$ and $\zeta_{i} = -$ for $i \leqslant m$. We get a new parameter $\q'$ by replacing all $(\rho, A_{i}, B_{i}, \zeta_{i})$ by $(\rho, A'_{i}, B'_{i}, \zeta'_{i})$ such that: $\zeta'_{i} = \zeta_{i}$ and 
\begin{itemize}

\item $A'_{i} = A_{i}, B'_{i} = B_{i}$ for $i > m$;

\item $A'_{i} = A_{i} - B_{i}, B'_{i} = 0$ for $i \leqslant m$.

\end{itemize}

\begin{theorem}
\label{thm: step three integral}
There is a bijection 
\begin{align*}
\Pkt{\q}^{\Sigma_{0}} & \rightarrow \Pkt{\q'}^{\Sigma_{0}} \\
\pi^{\Sigma_{0}}_{M, >_{\psi}}(\psi, \ul, \ueta) & \mapsto \pi^{\Sigma_{0}}_{M, >_{\psi}}(\psi', \ul, \ueta)
\end{align*}
such that 
\begin{align}
\label{eq: step three integral}
\pi^{\Sigma_{0}}_{M, >_{\psi}}(\psi, \ul, \ueta) \hookrightarrow \times_{i \leqslant m} \underbrace{\begin{pmatrix}
              -B_{i} & \cdots & -A_{i} \\
              \vdots &  & \vdots \\
              -1 & \cdots & -(A_{i} - B_{i} +1)
       \end{pmatrix}}_{I_{i}} \rtimes \pi^{\Sigma_{0}}_{M, >_{\psi}}(\psi', \ul, \ueta)
\end{align}
as the unique irreducible subrepresentation. 

Let us assume 
\[
\pi^{\Sigma_{0}}_{M, >_{\psi}}(\psi', \ul, \ueta) = \pi^{\Sigma_{0}}_{M, >_{\psi}}(\psi', \ul', \ueta').
\]
after changing $\zeta_{i}$ to positive for $i \leqslant m$. By applying \eqref{eq: step two} and \eqref{eq: step one} to $\pi^{\Sigma_{0}}_{M, >_{\psi}}(\psi', \ul', \ueta')$, we can embed the right hand side of \eqref{eq: step three integral} into an induced representation $\mathcal{I}$. Then $\mathcal{I}$ is a subrepresentation of the costandard representation, obtained by taking induction of the shifted Steinberg representations from the shifted Speh representations with a tempered representation $\sigma^{\Sigma_{0}}$ as in Theorem~\ref{thm: step one}.
\end{theorem}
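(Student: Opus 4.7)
The plan is to iterate Lemma~\ref{lemma: step three} until every Jordan block $(\rho, A_{i}, B_{i}, -)$ with $i \leqslant m$ has been reduced to $(\rho, A_{i} - B_{i}, 0, -)$, collect the shifted Steinberg representations produced along the way into the shifted Speh matrices $I_{i}$ of \eqref{eq: step three integral}, switch the signs $\zeta_{i}$ at the $B = 0$ blocks from $-$ to $+$ via the change-of-order formula of Section~\ref{sec: review} to land in $\psi'$, and finally appeal to Theorem~\ref{thm: step two} to describe $\mathcal{I}$ as a subrepresentation of the asserted costandard representation.

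First, I apply Lemma~\ref{lemma: step three} repeatedly. Each application picks a maximal $s$ and a range $l \leqslant i \leqslant s$ of blocks sharing the same $(A, B)$ value, shifts each such block by $(-1, -1)$, and produces an embedding whose leftmost induced factors are the shifted Steinbergs $\langle -B_{i}, \cdots, -A_{i} \rangle$. Since each round strictly decreases $\sum_{i \leqslant m,\, B_{i} \neq 0} B_{i}$, the iteration terminates at a parameter $\psi''$ which agrees with $\psi$ for $i > m$ and has blocks $(\rho, A_{i} - B_{i}, 0, -)$ for $i \leqslant m$. Composing the successive embeddings gives an embedding of $\pi^{\Sigma_{0}}_{M, >_{\psi}}(\psi, \ul, \ueta)$ into a large induction of shifted Steinbergs over $\pi^{\Sigma_{0}}_{M, >_{\psi}}(\psi'', \ul, \ueta)$. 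Within block $i$, the Steinbergs $\langle -B_{i}, \cdots, -A_{i} \rangle, \langle -(B_{i}-1), \cdots, -(A_{i}-1) \rangle, \ldots, \langle -1, \cdots, -(A_{i} - B_{i} + 1) \rangle$ appear in exactly the top-to-bottom order of the rows of $I_{i}$, because the first round of the iteration produces the outermost (top) row.

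Next, I rearrange the accumulated induction using the interchangeability statements for shifted Steinbergs and Speh representations from \cite[Corollary 4.3]{Xu:Comb}, bringing the Steinbergs belonging to each block $i$ into consecutive position so that they re-combine into the matrix $I_{i}$. The maximality rule selecting $s$ at each iteration of Lemma~\ref{lemma: step three} guarantees that the segment inclusion/disjointness hypotheses for the required commutations hold, since the shifts produced at earlier rounds are weakly larger than those produced later or surviving from other blocks. This yields the embedding \eqref{eq: step three integral} with $\psi''$ in place of $\psi'$. Switching $\zeta_{i}$ from $-$ to $+$ at the $B = 0$ blocks via the "$\zeta_{1} \neq \zeta_{2}$" case of the change-of-order formula of Section~\ref{sec: review} gives $\psi'$ together with the relabelling $(\ul, \ueta) \mapsto (\ul', \ueta')$, and establishes the bijection $\Pkt{\q}^{\Sigma_{0}} \rightarrow \Pkt{\q'}^{\Sigma_{0}}$ (as Lemma~\ref{lemma: step three} is already a bijection and the final sign change is one). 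The costandard description of $\mathcal{I}$ is then obtained by applying Theorem~\ref{thm: step two} to $\pi^{\Sigma_{0}}_{M, >_{\psi}}(\psi', \ul', \ueta')$ and verifying, again via \cite[Corollary 4.3]{Xu:Comb}, that the rows of each $I_{i}$ (which carry negative shifts bounded above by $-1$) may be moved past the shifted Speh representations introduced by Theorems~\ref{thm: step two} and \ref{thm: step one} to yield a subrepresentation of the desired costandard.

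The main obstacle will be the bookkeeping in the rearrangement step: at each commutation one must check the Zelevinsky-segment hypotheses of \cite[Corollary 4.3]{Xu:Comb}, and one must ensure that the unique-irreducible-subrepresentation property propagates through all of the rearrangements and through the final splicing with Theorem~\ref{thm: step two}. A secondary, but more routine, concern is tracking how the sign change at $B = 0$ transports $(\ul, \ueta)$ through the formulas of Section~\ref{sec: review}, so that the bijection in the statement is set up with the correct parameters on each side.
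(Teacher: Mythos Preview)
Your approach is essentially the same as the paper's, differing only in how the induction is packaged. The paper proves the result by induction on $\sum_{i \leqslant m} B_{i}$, but takes as its induction hypothesis the \emph{full} statement of the theorem, including the costandard claim about $\mathcal{I}$. Thus after one application of Lemma~\ref{lemma: step three} the parameter $\psi^{s}$ already enjoys the complete conclusion: the matrices $III_{i}$ are already formed and $\mathcal{I}^{s}$ is already known to sit inside the costandard. The only new work at each step is to check that the freshly produced Steinbergs $\langle -B_{s}, \cdots, -A_{s} \rangle$ can be moved past any shifted Steinberg of smaller shift coming from $III_{i}$, $II_{i}$, or $\tilde{I}_{j}$; this is exactly the two-case verification via \cite[Corollary 4.3]{Xu:Comb} that you flag as the ``main obstacle.'' Once that is done, $\langle -B_{i}, \cdots, -A_{i} \rangle$ is absorbed as the new top row of $III_{i}$ for $l \leqslant i \leqslant s$, and the unique-subrepresentation property is inherited from the costandard.

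By contrast, you iterate Lemma~\ref{lemma: step three} all the way down first and defer all rearrangement to the end. This works, but forces you to commute many Steinbergs past one another simultaneously rather than one layer at a time, and to argue separately that the unique-subrepresentation property survives the global rearrangement. The paper's packaging avoids this: by carrying the costandard claim in the induction hypothesis, the interchangeability check and the uniqueness are localized to a single new layer at each step. Your sign-switch at $B = 0$ and the subsequent appeal to Theorems~\ref{thm: step two} and~\ref{thm: step one} match the paper exactly.
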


\begin{proof}

We can prove this by induction on $\sum_{i \leqslant m} B_{i}$. By Lemma~\ref{lemma: step three} and the induction assumption, we have
\begin{align}
\label{eq: step three integral reduction}
\pi^{\Sigma_{0}}_{M, >_{\q}}(\psi, \ul, \ueta) \hookrightarrow & \times_{i = l}^{s} \langle -B_{i}, \cdots, -A_{i} \rangle \rtimes \mathcal{I}^{s}
\end{align}
and
\begin{align*}
\mathcal{I}^{s} := & \times_{i=l}^{s} \underbrace{\begin{pmatrix}
              -(B_{i} -1 ) & \cdots & -(A_{i} -1) \\
              \vdots &  & \vdots \\
               -1 & \cdots & -(A_{i} - B_{i} + 1) \end{pmatrix}}_{III_{i}} \times \times_{i < l \text{ or } m \geqslant i > s} \underbrace{\begin{pmatrix}
              -B_{i} & \cdots & -A_{i} \\
              \vdots &  & \vdots \\
               -1 & \cdots & -(A_{i} - B_{i} + 1) \end{pmatrix}}_{III_{i}} \times \\
& \times_{i = 1}^{n} \underbrace{\begin{pmatrix}
              B'_{i} & \cdots & -A'_{i}\\
              \vdots &  & \vdots \\
              B'_{i} + l'_{i} - 1 & \cdots & -(A'_{i} - l'_{i} + 1) \end{pmatrix}}_{II_{i}}
\times \times_{\text{ some } j} \underbrace{\begin{pmatrix}
              B'_{j+1} +l'_{j+1} & \cdots & -(A'_{j} - l'_{j}) \\
              \vdots &  & \vdots \\
              t'_{j} - \delta_{j} & \cdots & -(t'_{j} + \delta_{j}) \end{pmatrix}}_{\tilde{I}_{j}} \rtimes \, \sigma^{\Sigma_{0}}
\end{align*}
Moreover, $\mathcal{I}^{s}$ is a subrepresentation of the costandard representation, obtained by taking induction of the shifted Steinberg representations from the shifted Speh representations with $\sigma^{\Sigma_{0}}$. We claim the induced representation in \eqref{eq: step three integral reduction} is a subrepresentation of the costandard representation as we want. 

To prove the claim, we need to show any shifted Steinberg representation above, whose shift is less than that of $\langle -B_{s}, \cdots, -A_{s} \rangle$, can be moved to the front. There are two cases.
\begin{enumerate}
\item If it is in the form $\langle -x, \cdots, -y \rangle$ from $III_{i}$, then by our choice of $s$, we have $A_{s} \geqslant y$ and $x \geqslant B_{s}$. Hence, $\langle -x, \cdots, -y \rangle$ and $\langle -B_{s}, \cdots, -A_{s} \rangle$ are interchangeable.

\item If it is in the form $\langle x, \cdots, -y \rangle$ from $II_{i}$ or $\tilde{I}_{j}$, then we have $y \geqslant A_{s}$. Otherwise,
\[
\frac{x - y}{2} \geqslant -\frac{y}{2} > - \frac{A_{s}}{2} \geqslant - \frac{A_{s} + B_{s}}{2},
\]
which contradicts to our assumption about the shifts. Hence, $\langle x, \cdots, -y \rangle$ and $\langle -B_{s}, \cdots, -A_{s} \rangle$ are interchangeable (cf. \cite[Corollary 4.3]{Xu:Comb}).
\end{enumerate}
This finishes the proof of our claim. As a consequence, the induction in \eqref{eq: step three integral reduction} has a unique irreducible subrepresentation. So we can combine $\langle -B_{i}, \cdots, -A_{i} \rangle$ with $III_{i}$ for $l \leqslant i \leqslant s$, and this gives \eqref{eq: step three integral}.

\end{proof}

The following corollary is an immediate consequence of Theorem~\ref{thm: step three integral}.

\begin{corollary}
In the notations of Theorem~\ref{thm: step three integral}, the complete Langlands parameter $(\phi, \epsilon)$ of $\pi^{\Sigma_{0}}_{M, >_{\psi}}(\q, \ul, \ueta)$ is given as follows:
\[
\phi = (\oplus_{i}  \phi_{i}) \oplus \phi' \oplus (\oplus_{i}  \phi_{i}^{\vee}) 
\]
where $\phi_{i}$ is the Langlands parameter of $I_{i}$, $(\phi', \epsilon')$ is the complete Langlands parameter of $\pi^{\Sigma_{0}}_{M, >_{\psi}}(\psi', \ul, \ueta)$, and $\epsilon$ corresponds to $\epsilon'$ under the natural isomorphism
\(
\mathcal{S}^{\Sigma_{0}}_{\phi} \cong \mathcal{S}^{\Sigma_{0}}_{\phi'}.
\)
\end{corollary}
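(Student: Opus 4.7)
The plan is to read off the complete Langlands parameter $(\phi, \epsilon)$ directly from the structure of the embedding established in Theorem~\ref{thm: step three integral}. That theorem realizes $\pi^{\Sigma_{0}}_{M, >_{\psi}}(\q, \ul, \ueta)$ as the unique irreducible subrepresentation of an induced representation $\mathcal{I}$, and moreover $\mathcal{I}$ is itself a subrepresentation of the costandard representation built by breaking each shifted Speh block $I_{i}$ into its shifted Steinberg rows and inducing them together with the tempered representation $\sigma^{\Sigma_{0}}$ furnished by Theorem~\ref{thm: step one} (applied to $\pi^{\Sigma_{0}}_{M, >_{\psi}}(\q', \ul, \ueta)$ after changing $\zeta_{i}$ to positive). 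This costandard presentation is exactly the Langlands data, which is the key input for the proof.

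From here I would invoke the standard description of the complete Langlands parameter of the unique irreducible subrepresentation of an induction from a Siegel-type Levi $GL(n_{1}) \times \cdots \times GL(n_{r}) \times G'$. The resulting parameter is the direct sum of the Langlands parameters of the $GL$-factors, the parameter of the $G'$-factor, and the duals of the $GL$-parameters; the duals appear because a parameter landing in $\hat{G}$ must be self-dual as a representation of $WD_{F} \times SL(2,\mathbb{C})$. Regrouping the shifted Steinberg rows back into the Speh blocks $I_{i}$ does not alter the Langlands quotient datum (the $I_{i}$ being themselves the Langlands subrepresentations of the corresponding row inductions), so the $GL$-contribution on the dominant side is precisely $\oplus_{i} \phi_{i}$. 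Combining with the parameter $\phi'$ of $\pi^{\Sigma_{0}}_{M, >_{\psi}}(\q', \ul, \ueta)$ and its dual counterpart yields
\[
\phi = (\oplus_{i} \phi_{i}) \oplus \phi' \oplus (\oplus_{i} \phi_{i}^{\vee}).
\]

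For the character, the identification $\epsilon \leftrightarrow \epsilon'$ under $\mathcal{S}^{\Sigma_{0}}_{\phi} \cong \mathcal{S}^{\Sigma_{0}}_{\phi'}$ is the expected compatibility of the local Langlands correspondence with parabolic induction from Siegel-type Levis: adjoining self-dual pairs $\phi_{i} \oplus \phi_{i}^{\vee}$ of $GL$-type blocks enlarges $Z_{\hat{G}}(\phi)$ only by a connected factor (a diagonal product of $GL$s, together with the swap absorbed into the Weyl group), so the natural inclusion on centralizers induces an isomorphism of component groups, and $\epsilon$ is the pull-back of $\epsilon'$. The only point that would require care is the $\hat{\theta}_{0}$-twisted version in the even orthogonal case, but this follows from the $\theta_{0}$-equivariance of the inducing data together with the twisted local Langlands correspondence of \cite[Theorem 4.3]{Xu:Apacket}. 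Since the analogous statements have already appeared as the corollaries to Theorems~\ref{thm: step one} and \ref{thm: step two}, the present case requires no new machinery: it simply combines those two descriptions with the additional layer of shifted Speh blocks introduced in Theorem~\ref{thm: step three integral}.
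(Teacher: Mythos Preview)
Your proposal is correct and matches the paper's approach: the paper simply declares the corollary an ``immediate consequence'' of Theorem~\ref{thm: step three integral}, relying on the same mechanism you spell out---reading the Langlands data from the embedding into the costandard representation and invoking the standard compatibility of the local Langlands correspondence with parabolic induction from Siegel-type Levis. You have filled in the details the paper leaves implicit, including the reference back to the parallel corollaries for Theorems~\ref{thm: step one} and~\ref{thm: step two}.
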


\subsection{Half-integral case}

We assume $A_{i}, B_{i} \notin \mathbb{Z}$. Recall $\zeta_{i} = +$ for $i > m$ and $\zeta_{i} = -$ for $i \leqslant m$. 

\begin{theorem}
\label{thm: step three half-integral}
Consider the maximal sequence of integers
\[
0 = s_{0} < s_{1} < \cdots < s_{l} = m
\]
such that $A_{s_{j}} - B_{s_{j}} \neq A_{s_{j} + 1} - B_{s_{j} + 1}$. For any $0 \leqslant k \leqslant l$, we get a new parameter $\q'_{k}$ by replacing all $(\rho, A_{i}, B_{i}, \zeta_{i})$ by $(\rho, A'_{i}, B'_{i}, \zeta'_{i})$ such that: $\zeta'_{i} = +$ and
\begin{itemize}

\item $A'_{i} = A_{i}, B'_{i} = B_{i}$ for $i > m$;

\item $A'_{i} = A_{i} - B_{i} -1/2, B'_{i} = 1/2$ for $i \leqslant m$ and $i \neq s_{k}$;

\item $A'_{i} = A_{i} - B_{i} + 1/2, B'_{i} = 1/2$ for $i = s_{k}$.

\end{itemize}
Then we can divide $\Pkt{\q}^{\Sigma_{0}}$ into $l + 1$ classes, i.e.,
\[
\Pkt{\q}^{\Sigma_{0}} = \sqcup_{k = 0}^{l} \, \Pkt{\q}^{\Sigma_{0}}(k).
\]
For any $0 \leqslant k \leqslant l$, we put an order $>_{\psi'_{k}}$ on $Jord(\q'_{k})$ so that
\[
(\rho, A'_{i}, B'_{i}, \zeta'_{i}) >_{\psi'_{k}} (\rho, A'_{i-1}, B'_{i-1}, \zeta'_{i-1}).
\]
Then we can get an injection 
\[
\Pkt{\q}^{\Sigma_{0}}(k) \hookrightarrow \Pkt{\q'_{k}}^{\Sigma_{0}}, \quad \pi^{\Sigma_{0}} \mapsto \pi^{\Sigma_{0}}_{M, >_{\q'_{k}}}(\q'_{k}, \ul', \ueta'),
\]
such that
\begin{align}
\label{eq: step three half-integral}
\pi^{\Sigma_{0}} \hookrightarrow \times_{s_{k} \neq i \leqslant m} \underbrace{\begin{pmatrix}
              -B_{i} & \cdots & -A_{i}\\
              \vdots &  & \vdots \\
              -1/2  & \cdots & -(A_{i} - B_{i} +1/2) \end{pmatrix}}_{I_{i}} \times \underbrace{\begin{pmatrix}
              -B_{s_{k}} & \cdots & -A_{s_{k}} \\
              \vdots &  & \vdots \\
              -3/2 & \cdots & -(A_{s_{k}} - B_{s_{k}} +3/2)
       \end{pmatrix}}_{I_{s_{k}}} \rtimes \pi^{\Sigma_{0}}_{M, >_{\q'_{k}}}(\q'_{k}, \ul', \ueta')
\end{align}
as the unique irreducible subrepresentation. The image is characterized by the condition that for all $i \leqslant s_{k}$,
\begin{itemize}

\item $l'_{i} = 0$;

\item $\eta'_{i} = - \prod_{j < i} (-1)^{A_{j} - B_{j} + 1}$.

\end{itemize}
When $k \neq 0$, the second condition can also be simplified as $\eta'_{1} = -1$. 

After applying \eqref{eq: step two} and \eqref{eq: step one} to $\pi^{\Sigma_{0}}_{M, >_{\q'_{k}}}(\q'_{k}, \ul', \ueta')$, we can embed the right hand side of \eqref{eq: step three half-integral} into an induced representation $\mathcal{I}$. Then $\mathcal{I}$ is a subrepresentation of the costandard representation, obtained by taking induction of the shifted Steinberg representations from the shifted Speh representations with $\sigma^{\Sigma_{0}}$ as in Theorem~\ref{thm: step one}.
\end{theorem}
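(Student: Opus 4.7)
My plan is to mimic the induction strategy used in the proof of Theorem~\ref{thm: step three integral}, running the induction on $\sum_{i \leqslant m}(B_i - 1/2)$, with Lemma~\ref{lemma: step three} as the main inductive tool. The key point is that in the half-integral setting Lemma~\ref{lemma: step three} can shift an interval $[B_i, A_i]$ downward only until $B_i = 1/2$, and the residual situation---after all Jordan blocks with $i \leqslant m$ have reached $B_i = 1/2$---is precisely where the splitting of $\Pkt{\q}^{\Sigma_{0}}$ into $l+1$ classes must occur.

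For the inductive step I would pick $s \leqslant m$ maximal among the indices with $B_i > 1/2$ for which $A_s$ is largest, and then among these for which $B_s$ is largest; let $l \leqslant s$ be minimal with $A_l = \cdots = A_s$ and $B_l = \cdots = B_s$. Lemma~\ref{lemma: step three} then provides an embedding
\[
\pi^{\Sigma_{0}}_{M, >_{\psi}}(\psi, \ul, \ueta) \hookrightarrow \times_{i = l}^{s} \langle -B_{i}, \cdots, -A_{i} \rangle \rtimes \pi^{\Sigma_{0}}_{M, >_{\psi}}(\psi^{s}, \ul, \ueta),
\]
where $\psi^s$ has $(\rho, A_i - 1, B_i - 1, -)$ at positions $l \leqslant i \leqslant s$. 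The induction hypothesis applied to $\psi^s$ with the same chosen $k$ then yields the target embedding for $\psi^s$, and the newly extracted Steinberg factors merge with the bottom rows of the matrices $I_i$ in \eqref{eq: step three half-integral}. The costandard property is preserved by precisely the shift-maximality check used in the proof of Theorem~\ref{thm: step three integral}: any competing shifted Steinberg $\langle x, \cdots, -y\rangle$ inside the target induction satisfies either $y \geqslant A_s$ or $x \leqslant -B_s$, hence can be moved past $\langle -B_s, \cdots, -A_s\rangle$ by \cite[Corollary 4.3]{Xu:Comb}.

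The main obstacle is the base case $B_i = 1/2$ for all $i \leqslant m$. Here the constraints $A_i \geqslant A_{i-1}$, $B_i \leqslant B_{i-1}$, together with the fact that $A_i - B_i$ jumps only at the $s_j$'s, force $(A_i, B_i)$ to be locally constant on each block $s_{j-1} < i \leqslant s_j$, so the Jordan blocks $(\rho, A_i, 1/2, -)$ cluster into $l$ groups of identical blocks. To reach the target parameter $\q'_k$, in which all $\zeta'_i = +$, I would use the change-of-order formula of Section~\ref{sec: review} case~(2), which at each adjacent swap with a block of opposite $\zeta$ twists the sign $\eta$ by $(-1)^{A'-B'+1}$; iterating this across the preceding blocks produces exactly the prescribed $\eta'_i = -\prod_{j < i}(-1)^{A_j - B_j + 1}$, while $l'_i = 0$ reflects that Lemma~\ref{lemma: step three} has been fully exhausted at those positions. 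The choice of $k$ then corresponds to which of the $l$ clusters is singled out with the longer residual Jordan block $(\rho, A_{s_k} - B_{s_k} + 1/2, 1/2, +)$ rather than the generic $(\rho, A_i - B_i - 1/2, 1/2, +)$; I expect this asymmetry to be produced by a final ``pull'' and ``expand'' on a suitable dominating parameter as in \cite{Xu:Comb}, the different admissible placements on the cluster yielding the $l+1$ disjoint subsets $\Pkt{\q}^{\Sigma_{0}}(k)$ and forcing the injection---rather than bijection---onto $\Pkt{\q'_k}^{\Sigma_{0}}$.

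Once \eqref{eq: step three half-integral} is in hand, the costandard description of $\mathcal{I}$ follows by feeding $\pi^{\Sigma_{0}}_{M, >_{\q'_{k}}}(\q'_{k}, \ul', \ueta')$ through Theorems~\ref{thm: step two} and \ref{thm: step one} and combining with the freshly extracted Steinberg factors; the verification that the Steinberg shifts in $\mathcal{I}$ nest correctly is the same routine shift-comparison as at the end of the proof of Theorem~\ref{thm: step three integral}.
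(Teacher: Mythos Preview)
Your inductive step via Lemma~\ref{lemma: step three} matches the paper's argument in Section~5.2.2 closely, including the shift-comparison to verify the costandard property. The gap is entirely in the base case $B_i = 1/2$ for all $i \leqslant m$.

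You propose to reach $\psi'_k$ (with all $\zeta'_i = +$) from $\psi$ (with $\zeta_i = -$ for $i \leqslant m$) using the change-of-order formula, case~(2). But that formula only \emph{reorders} two adjacent blocks with opposite signs; it never changes the sign $\zeta_i$ of any block, nor does it produce the Steinberg factors $\langle -1/2, \cdots, -A_i \rangle$ that appear as the bottom rows of the matrices $I_i$ in \eqref{eq: step three half-integral}. The mechanism that actually converts $(\rho, A_i, 1/2, -)$ into $(\rho, A_i - 1, 1/2, +)$ while spitting out $\langle -1/2, \cdots, -A_i \rangle$ is a separate ``change sign'' result (Lemma~\ref{lemma: change sign} in the paper, resting on Proposition~\ref{prop: change sign} in Appendix~B), and this is the heart of the base case. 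Without it you have no way to flip the $\zeta_i$'s or to generate the required inducing factors.

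Your account of where the $l+1$ classes come from is also off. The splitting does not arise from ``pull and expand'' choices on a dominating parameter. It comes from a dichotomy on $(l_m, \eta_m)$ applied iteratively: after reordering so that $(\rho, A_m, 1/2, -)$ is maximal among the negative-sign blocks, one asks whether $l_m \neq 0$ or $\eta_m = 1$ (first case) versus $l_m = 0$ and $\eta_m = -1$ (second case). In the first case Lemma~\ref{lemma: change sign} applies directly and shrinks that block to $(\rho, A_m - 1, 1/2, +)$. In the second case the block survives at size $(\rho, A_m, 1/2, +)$ and one recurses on the remaining blocks; each time the recursion hits the second case at the top of a new $A$-cluster, the index $k$ increments. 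This is what forces the image condition $l'_i = 0$, $\eta'_1 = -1$ for $i \leqslant s_k$ and explains why the map to $\Pi_{\psi'_k}^{\Sigma_0}$ is only an injection.
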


The following corollary is an immediate consequence of the theorem.

\begin{corollary}
In the notations of Theorem~\ref{thm: step three half-integral}, the complete Langlands parameter $(\phi, \epsilon)$ of $\pi^{\Sigma_{0}}$ is given as follows
\[
\phi = (\oplus_{i}  \phi_{i}) \oplus \phi' \oplus (\oplus_{i} \phi_{i}^{\vee}) 
\]
where $\phi_{i}$ is the Langlands parameter of $I_{i}$, $(\phi', \epsilon')$ is the complete Langlands parameter of $\pi^{\Sigma_{0}}_{M, >_{\q'_{k}}}(\q'_{k}, \ul', \ueta')$, and $\epsilon$ corresponds to $\epsilon'$ under the natural isomorphism
\(
\mathcal{S}^{\Sigma_{0}}_{\phi} \cong \mathcal{S}^{\Sigma_{0}}_{\phi'}.
\)
\end{corollary}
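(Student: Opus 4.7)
The plan is to read off the Langlands parameter directly from the costandard embedding supplied by Theorem~\ref{thm: step three half-integral}. That theorem presents $\pi^{\Sigma_{0}}$ as the unique irreducible subrepresentation of an induction whose data are the shifted Speh representations $I_{i}$ (for $i\leqslant m$) together with $\pi^{\Sigma_{0}}_{M,>_{\q'_{k}}}(\q'_{k},\ul',\ueta')$, and moreover asserts that this induction embeds into a costandard representation built out of shifted Steinberg representations and the tempered piece $\sigma^{\Sigma_{0}}$. So the corollary should follow just by invoking the Langlands classification on this costandard realization.

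First I would apply the Langlands classification: the Langlands parameter of the unique irreducible subrepresentation of such a costandard module is the sum of the parameters of the $\mathrm{GL}$-type inducing factors, each appearing together with its dual (since we are inducing from a maximal parabolic of a symplectic or orthogonal group via $\rtimes$), together with the parameter of the tempered piece. The shifted Speh representation $I_{i}$ has Langlands parameter $\phi_{i}$, so the $I_{i}$'s contribute $\oplus_{i}(\phi_{i}\oplus\phi_{i}^{\vee})$ to $\phi$. The remaining part of the costandard module is, by construction, the costandard realization (supplied by \eqref{eq: step two} and \eqref{eq: step one}) of $\pi^{\Sigma_{0}}_{M,>_{\q'_{k}}}(\q'_{k},\ul',\ueta')$, whose complete Langlands parameter $(\phi',\epsilon')$ was already computed in the corollaries following Theorems~\ref{thm: step one} and \ref{thm: step two}. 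Assembling these pieces yields
\[
\phi=\bigl(\oplus_{i}\phi_{i}\bigr)\oplus\phi'\oplus\bigl(\oplus_{i}\phi_{i}^{\vee}\bigr).
\]

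For the character, I would use that the centralizer $Z_{\D{G}\rtimes\langle\hat\theta_{0}\rangle}(\phi)$ differs from $Z_{\D{G'}\rtimes\langle\hat\theta_{0}\rangle}(\phi')$ only by a connected factor arising from the $\phi_{i}\oplus\phi_{i}^{\vee}$ summands, which gives the natural isomorphism $\mathcal{S}^{\Sigma_{0}}_{\phi}\cong\mathcal{S}^{\Sigma_{0}}_{\phi'}$. The fact that $\epsilon$ corresponds to $\epsilon'$ under this isomorphism then follows because parabolic induction from a Levi of the form $\mathrm{GL}\times G'$, with respect to a $\theta_{0}$-stable Whittaker datum, preserves the component-group character attached by Arthur's bijection.

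The only nontrivial point is already handled inside Theorem~\ref{thm: step three half-integral}, namely the verification that the induction in \eqref{eq: step three half-integral} really does sit as a subrepresentation of a costandard module whose inducing data is given by shifted Steinberg representations plus $\sigma^{\Sigma_{0}}$; once this is in place, the Langlands classification applies directly and the corollary drops out. Hence I expect no further obstacle: the entire proof amounts to reading off parameters from a costandard and quoting the two previous corollaries.
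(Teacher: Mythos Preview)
Your proposal is correct and follows exactly the approach the paper intends: the paper records no proof beyond ``The following corollary is an immediate consequence of the theorem,'' and your argument spells out precisely this immediate consequence by reading off the Langlands data from the costandard embedding furnished in Theorem~\ref{thm: step three half-integral} and invoking the corollaries to Theorems~\ref{thm: step one} and~\ref{thm: step two} for the parameter of $\pi^{\Sigma_{0}}_{M,>_{\q'_{k}}}(\q'_{k},\ul',\ueta')$.
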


We will prove Theorem~\ref{thm: step three half-integral} by induction on $\sum_{i \leqslant m} (B_{i} - 1/2)$. 

\subsubsection{Change sign}
\label{subsubsection: change sign}

Suppose 
\begin{align}
\label{eq: change sign assumption}
\sum_{i \leqslant m} (B_{i} - 1/2) = 0,
\end{align}
i.e., $B_{i} = 1/2$ for $i \leqslant m$. We change the order $>_{\q}$ so that
\[
(\rho, A_{i}, 1/2, \zeta_{i}) >_{\q} (\rho, A_{i+1}, 1/2, \zeta_{i+1})
\] 
for $1 \leqslant i \leqslant m-1$. Then there are two cases.

{\bf First case:} $l_{m} \neq 0$ or $\eta_{m} = 1$. There exists $l \leqslant m$ such that 
\[
A_{m} = A_{m-1} = \cdots = A_{l} \text{ with } A_{l} > A_{l-1} \text{ or } l = 1.
\]
Let $\q''$ be obtained from $\q$ by replacing all $(\rho, A_{i}, B_{i}, \zeta_{i})$ by $(\rho, A_{i} - 1, 1/2, -\zeta_{i})$ for $l \leqslant i \leqslant m$. Note $\psi''$ also falls into the general case that we consider in the beginning of Section~\ref{sec: step three}. We also change $>_{\psi''}$ by reversing the order for the Jordan blocks with negative $\zeta$. It can be obtained by moving $(\rho, A_{i} - 1, 1/2, -\zeta_{i})$ to the front of the last $m$ Jordan blocks, one by one as $i$ goes from $l$ to $m$. In particular, it satisfies
\[
(\rho, A_{i}, B_{i}, \zeta_{i}) >_{\psi''}  (\rho, A_{m} - 1, 1/2, -\zeta_{m}) >_{\psi''} \cdots >_{\psi''} (\rho, A_{l} - 1, 1/2, -\zeta_{l}) >_{\psi''} (\rho, A_{j}, 1/2, \zeta_{j})
\]
for any $i > m$ and $j < l$.


\begin{lemma}
\label{lemma: change sign}
There is a bijection
\[
\Big\{\pi^{\Sigma_{0}}_{M, >_{\psi}}(\psi, \ul, \ueta) \in \Pkt{\q}^{\Sigma_{0}} \, | \, l_{m} \neq 0 \,\, {\rm or} \,\,\eta_{m} = 1 \Big\} \rightarrow \Pkt{\q''}^{\Sigma_{0}}
\]
such that 
\[
\pi^{\Sigma_{0}}_{M, >_{\psi}}(\psi, \ul, \ueta) \hookrightarrow \times_{i = l}^{m} \langle -1/2, \cdots, -A_{i} \rangle \rtimes \pi^{\Sigma_{0}}_{M, >_{\psi''}}(\psi'', \ul'', \ueta''),
\]
where $(\ul'', \ueta'')$ only differs from $(\ul, \ueta)$ for $1 \leqslant i \leqslant m$. We will set $\eta_{m} = -1$ if $l_{m} = (A_{m} + \frac{1}{2})/2$. Then
\begin{align*}
(i < l) \quad & \begin{cases}
l''_{i}  = l_{i} \\
\eta''_{i}  = \eta_{i} (-1)^{(m- l + 1)(A_{i} - \frac{1}{2})} 
\end{cases} \\
(l \leqslant i < m) \quad & \begin{cases}
\eta''_{l} = -\eta_{m} (-1)^{(l-1)(A_{i} + \frac{1}{2})} \\
\eta''_{i+1} = (-1)^{A_{i} - \frac{3}{2}} \eta''_{i} 
\end{cases} \\
(l \leqslant i \leqslant m) \quad l''_{i} = & \begin{cases} l_{m} - 1  & \text{ if } \eta_{m} = -1 \\
l_{m} & \text{ if } \eta_{m} = 1
\end{cases} 
\end{align*}

\end{lemma}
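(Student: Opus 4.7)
The plan is to combine the sign-change operation of \cite[Section 7.3]{Xu:Comb} with the dominating-parameter / Jacquet-module technique already used in the proofs of Lemmas~\ref{lemma: step one}, \ref{lemma: step two}, and \ref{lemma: step three}. First I would introduce a dominating parameter $\psi_{\gg}$ obtained from $\psi$ by shifting each $(\rho, A_{i}, 1/2, -)$ for $l \leqslant i \leqslant m$ to $(\rho, A_{i} + T_{i}, 1/2 + T_{i}, -)$ with $T_{i} \gg 0$ so that the shifted intervals are mutually disjoint and far away from every other Jordan block. A parallel parameter $\psi''_{\gg}$ is defined for $\psi''$. The push-back relation \eqref{eq: push back} then reduces the desired embedding to an analogous statement between $\pi^{\Sigma_0}_{M, >_{\psi}}(\psi_{\gg}, \ul, \ueta)$ and $\pi^{\Sigma_0}_{M, >_{\psi''}}(\psi''_{\gg}, \ul'', \ueta'')$, where we have more room to move representations around.

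Next I would apply the sign-change operation of \cite[Section 7.3]{Xu:Comb} to each $(\rho, A_{i} + T_{i}, 1/2 + T_{i}, -)$ one at a time, going from $i = l$ to $i = m$. Under the hypothesis \eqref{eq: change sign assumption}, this operation converts the Jordan block into $(\rho, A_{i} - 1 + T_{i}, 1/2 + T_{i}, +)$ and produces, as the outermost factor in the induction, a shifted Steinberg $\langle -(1/2 + T_{i}), \cdots, -(A_{i} + T_{i}) \rangle$. After combining with the Jacquet operators that implement the shift back (as in Lemma~\ref{lemma: step three}), these Steinbergs contribute exactly the factors $\langle -1/2, \cdots, -A_{i} \rangle$ appearing in the desired embedding.

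The third step is to pass from the order $>_{\psi}$ to the order $>_{\psi''}$ using the change-of-order formula in Section~\ref{sec: review}. The only nontrivial swaps involve adjacent Jordan blocks with different signs $\zeta$, which fall under case (2) of the formula. Each such swap flips the relevant $\eta$ by a factor $(-1)^{A' - B' + 1}$, and composing these flips across all the reorderings produces the prescriptions for $\eta''_{i}$ in the statement, both in the ranges $i < l$ (where the $(m - l + 1)(A_{i} - \tfrac{1}{2})$ exponent records the cumulative crossings) and $l \leqslant i < m$ (where the recursive formula $\eta''_{i+1} = (-1)^{A_{i} - 3/2}\eta''_{i}$ reflects the step-by-step transport). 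The behaviour of the $l$-component is simpler: the sign change leaves $l_{i}$ unchanged when $\eta_{m} = 1$, while it decreases $l_{m}$ by $1$ when $\eta_{m} = -1$ (because an extra row of the Speh block is absorbed into the extracted Steinberg). This also explains the exclusion of the pair $l_{m} = 0, \eta_{m} = -1$ from the source: the corresponding $l''_{m} = -1$ is not a legal value.

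The main obstacle will be handling the equivalence relation $\sim_{\Sigma_0}$ at the boundary $l_{m} = (A_{m} + 1/2)/2$; here the convention $\eta_{m} = -1$ forced in the statement is precisely what is needed to make the formula for $l''_{m}$ single-valued after the identification. A secondary technical point is verifying that the induced representation on the right has the claimed unique irreducible subrepresentation, but this follows from the costandard-embedding argument already used in Theorems~\ref{thm: step one} and \ref{thm: step two}: the Steinbergs $\langle -1/2, \cdots, -A_{i} \rangle$ have smaller shift than the relevant shifted Speh representations inside $\pi^{\Sigma_0}_{M, >_{\psi''}}(\psi'', \ul'', \ueta'')$, so \cite[Corollary 4.3]{Xu:Comb} applies. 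Finally, bijectivity is a counting check: using \eqref{eq: parametrization} together with the sign condition \eqref{eq: sign conditin general}, the cardinality of the restricted source matches $|\Pkt{\psi''}^{\Sigma_0}|$, so the injection produced above is automatically surjective.
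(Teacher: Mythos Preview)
Your dominating-parameter choice is backwards, and this creates a genuine gap. You propose shifting the blocks $(\rho, A_{i}, 1/2, -)$ for $l \leqslant i \leqslant m$ to $(\rho, A_{i}+T_{i}, 1/2+T_{i}, -)$ and then applying the sign-change operation of \cite[Section 7.3]{Xu:Comb}. But that operation (and its extension in Proposition~\ref{prop: change sign}) is only available when the Jordan block sits at $B = 1/2$; once you shift to $B = 1/2 + T_{i}$ the hypothesis fails and there is no sign-change formula to invoke. The paper does the opposite: it leaves the blocks with $B = 1/2$ in place and instead shifts the blocks with $i > m$ far away to form $\psi_{\gg}, \psi''_{\gg}$. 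With those other blocks removed from the picture, Proposition~\ref{prop: change sign} (together with the change-of-order formulas) applies directly and gives both the nonvanishing equivalence and the embedding \eqref{eq: change sign reduction} at the $\gg$ level.

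The second half of the paper's argument---pushing back from $\psi_{\gg}$ to $\psi$ via $\circ_{i>m}\Jac_{X_{i}}$---is where the real work lies, and your proposal does not address it. One must show that none of the entries of the Jacquet matrices $X_{i}$ can ``hit'' the Steinberg factors $\langle -1/2, \cdots, -A_{j}\rangle$; the paper does this by a careful case analysis ruling out $\Jac_{A_{m}+1}\,\pi^{\Sigma_{0}}_{M, >_{\psi''}}(\psi''_{>}, \ul'', \ueta'') \neq 0$. Your costandard-embedding remark and your counting argument for bijectivity are both reasonable heuristics, but neither substitutes for this Jacquet computation, and the paper in fact establishes bijectivity by proving both implications of nonvanishing directly rather than by a cardinality count.
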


\begin{proof}
If $\pi^{\Sigma_{0}}_{M, >_{\psi}}(\psi, \ul, \ueta) \neq 0$, then it is necessary that 
\[
l_{i} = l_{m} \, \text{ and } \, \eta_{i} = (-1)^{A_{i+1} - \frac{1}{2}} \eta_{i+1}  \quad \text{ for } l \leqslant i < m.
\]
(cf. \cite[Lemma 5.5]{Xu:Comb}) Similarly, if $\pi^{\Sigma_{0}}_{M, >_{\psi''}}(\psi'', \ul'', \ueta'') \neq 0$, then it is necessary that
\[
l''_{i} = l''_{m} \, \text{ and } \, \eta''_{i + 1} = (-1)^{A_{i} - \frac{3}{2}} \eta''_{i}  \quad \text{ for } l \leqslant i < m.
\] 
So for the bijection, it suffices to show 
\[
\pi^{\Sigma_{0}}_{M, >_{\psi}}(\psi, \ul, \ueta) \neq 0 \Leftrightarrow \pi^{\Sigma_{0}}_{M, >_{\psi''}}(\psi'', \ul'', \ueta'') \neq 0
\]
Let $\psi_{\gg}$ (resp. $\psi''_{\gg}$) be dominating parameters obtained from $\psi$ (resp. $\psi''$) by changing $(\rho, A_{i}, B_{i}, \zeta_{i})$ to $(\rho, A_{i} + T_{i}, B_{i} + T_{i}, \zeta_{i})$ for $i > m$, far away from the remaining ones. By Proposition~\ref{prop: change sign} together with the change of order formulas, we know 
\[
\pi^{\Sigma_{0}}_{M, >_{\psi}}(\psi_{\gg}, \ul, \ueta) \neq 0 \Leftrightarrow \pi^{\Sigma_{0}}_{M, >_{\psi''}}(\psi''_{\gg}, \ul'', \ueta'') \neq 0.
\] 
Moreover, we have
\begin{align}
\label{eq: change sign reduction}
\pi^{\Sigma_{0}}_{M, >_{\psi}}(\psi_{\gg}, \ul, \ueta) \hookrightarrow \times_{i = l}^{m} \langle -1/2, \cdots, -A_{i} \rangle \rtimes \pi^{\Sigma_{0}}_{M, >_{\psi''}}(\psi''_{\gg}, \ul'', \ueta'').
\end{align}
Suppose $\pi^{\Sigma_{0}}_{M, >_{\psi}}(\psi, \ul, \ueta) \neq 0$, then if we apply 
\(
\circ_{i > m} \Jac_{X_{i}}
\)
($i$ decreasing) with 
\[
X_{i} = \begin{bmatrix}
             B_{i} + T_{i} & \cdots & B_{i} + 1 \\
              \vdots &  & \vdots \\
              A_{i} + T_{i} & \cdots & A_{i} + 1
       \end{bmatrix} 
\]
to the right hand side of \eqref{eq: change sign reduction}, we should also get something nonzero. We claim the result must be
\[
\times_{i = l}^{m} \langle -1/2, \cdots, -A_{i} \rangle \rtimes \circ_{i > m} \, \Jac_{X_{i}} \, \pi^{\Sigma_{0}}_{M, >_{\psi''}}(\psi''_{\gg}, \ul'', \ueta''),
\]
which shows the nonvanishing of $\pi^{\Sigma_{0}}_{M, >_{\q''}}(\psi'', \ul'', \ueta'')$. Otherwise, since $A_{i} + 1 > A_{m}$ for $i > m$, there exist $j > m$ and $t \geqslant 1$ such that 
\begin{align*}
\Jac_{X_{j, t}} \circ \, \circ_{j > i > m} \, \Jac_{X_{i}} ({\rm RHS.} \eqref{eq: change sign reduction}) 
\end{align*}
contains a term
\[
\times_{i = l}^{m - 1} \langle -1/2, \cdots, -A_{i} \rangle \times \langle -1/2, \cdots, -(A_{m} - 1) \rangle \times \Jac_{X^{-}_{j, t}} \circ \, \circ_{j > i > m} \, \Jac_{X_{i}}  \, \pi^{\Sigma_{0}}_{M, >_{\psi''}}(\psi''_{\gg}, \ul'', \ueta'') \neq 0
\]
where
\[
X_{j, t} := \begin{bmatrix}
             B_{j} + T_{j} & \cdots & B_{j} + t \\
              \vdots &  & \vdots \\
              A_{j} + T_{j} & \cdots & A_{j} + t
       \end{bmatrix} 
\]
and $X^{-}_{j, t}$ means that we take away the entry $A_{m}$ from the last column of $X_{j, t}$. For $X^{-}_{j, t}$ to be well-defined, we must have
\[
B_{j} + 1 \leqslant B_{j} + t \leqslant A_{m} < A_{j} + 1 \leqslant A_{j} + t
\]
Let $\psi''_{>}$ be obtained from $\psi''_{\gg}$ by changing $(\rho, A_{i} + T_{i}, B_{i} + T_{i}, \zeta_{i})$ back to $(\rho, A_{i}, B_{i}, \zeta_{i})$ for $j > i > m$ and $(\rho, A_{j} + T_{j}, B_{j} + T_{j}, \zeta_{i})$ to $(\rho, A_{j} + t, B_{j} + t, \zeta_{j})$. Then we can conclude
\[
\Jac_{B_{j} + t, \cdots, A_{m} - 1, A_{m} + 1, \cdots, A_{j} + t} \, \pi^{\Sigma_{0}}_{M, >_{\psi''}}(\psi''_{>}, \ul'', \ueta'') \neq 0.
\]
It follows $\Jac_{A_{m} + 1} \pi^{\Sigma_{0}}_{M, >_{\psi''}}(\psi''_{>}, \ul'', \ueta'') \neq 0$. But this is impossible, since $B_{i} + T_{i} > A_{m} + 1$ for $i > j$, and 
\[
B_{i} \leqslant B_{j} < B_{j} + t \leqslant A_{m} < A_{m} + 1
\]
for $j > i > m$. 

Conversely, if $\pi^{\Sigma_{0}}_{M, >_{\psi''}}(\psi'', \ul'', \ueta'') \neq 0$, then 
\[
\pi^{\Sigma_{0}}_{M, >_{\psi''}}(\psi''_{\gg}, \ul'', \ueta'') \hookrightarrow \times_{i > m} \, \mathcal{C}_{X_{i}} \rtimes \pi^{\Sigma_{0}}_{M, >_{\psi''}}(\psi'', \ul'', \ueta''),
\]
where
\[
\mathcal{C}_{X_{i}} := \begin{pmatrix}
             B_{i} + T_{i} & \cdots & B_{i} + 1 \\
              \vdots &  & \vdots \\
              A_{i} + T_{i} & \cdots & A_{i} + 1
       \end{pmatrix}.
\]
Since $\mathcal{C}_{X_{i}}$ and $\langle -1/2, \cdots, -A_{m} \rangle$ are interchangeable for $i > m$, it follows from \eqref{eq: change sign reduction} that
\[
\pi^{\Sigma_{0}}_{M, >_{\psi}}(\psi, \ul, \ueta) \hookrightarrow \times_{i = l}^{m} \langle -1/2, \cdots, -A_{i} \rangle \rtimes \pi^{\Sigma_{0}}_{M, >_{\psi''}}(\psi'', \ul'', \ueta'').
\]
Hence $\pi^{\Sigma_{0}}_{M, >_{\psi}}(\psi, \ul, \ueta) \neq 0$. In the meantime, we have also shown the inclusion relation as well.

\end{proof}


{\bf Second case:} $l_{m} = 0$ and $\eta_{m} = -1$. By \cite[Lemma 5.5]{Xu:Comb}, it is necessary that $l_{i} = 0$ for $i < m$. Therefore,
\[
\eta_{i} = (-1)^{A_{i+1} - \frac{1}{2}}\eta_{i+1} \quad \text{ for $i < m$. }
\]
We get a new parameter $\psi'$ by changing $(\rho, A_{m}, 1/2, \zeta_{m})$ to $(\rho, A_{m}, 1/2, -\zeta_{m})$, and $(\rho, A_{i}, 1/2, \zeta_{i})$ to $(\rho, A_{i}-1, 1/2, -\zeta_{i})$ for $i<m$. After imposing the usual order $>_{\psi'}$ on the Jordan blocks of $\psi'$, i.e.,
\[
(\rho, A_{m}, 1/2, -\zeta_{m}) >_{\psi'} (\rho, A_{m-1}-1, 1/2, -\zeta_{i}) >_{\psi'} \cdots >_{\psi'} (\rho, A_{1}-1, 1/2, -\zeta_{1}),
\]
we would get $(\ul', \ueta')$ from $(\ul, \ueta)$ by requiring $l'_{i} = 0$ for $i \leqslant m$ and
\[
\eta'_{1} = -1 \text{ and } \eta'_{i+1} = (-1)^{A_{i} - \frac{3}{2}}\eta'_{i} \quad \text{ for $i < m$. }
\]

\begin{lemma}
\[
\pi^{\Sigma_{0}}_{M, >_{\psi}}(\psi, \ul, \ueta) \hookrightarrow \times_{i< m} \langle -1/2, \cdots, -A_{i} \rangle \rtimes \pi^{\Sigma_{0}}_{M, >_{\psi'}}(\psi', \ul', \ueta'),
\]
\end{lemma}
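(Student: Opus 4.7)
The plan is to mirror the proof of the first case (the preceding lemma on sign change), with the simplification that here the top Jordan block $(\rho, A_m, 1/2, \zeta_m)$ is merely sign-flipped rather than also shrunk, so one fewer Steinberg factor appears on the right. As a first step I would introduce dominating parameters $\psi_{\gg}$ and $\psi'_{\gg}$, obtained from $\psi$ and $\psi'$ respectively by shifting each $(\rho, A_i, B_i, \zeta_i)$ with $i > m$ to $(\rho, A_i + T_i, B_i + T_i, \zeta_i)$ for $T_i \gg 0$, so that the resulting intervals are pairwise disjoint and far from the first $m$ blocks. Combining the sign-change proposition invoked in the first-case lemma with the change-of-order formulas recalled in Section~\ref{sec: review}, I would obtain the equivalence
\[
\pi^{\Sigma_{0}}_{M, >_{\psi}}(\psi_{\gg}, \ul, \ueta) \neq 0 \Leftrightarrow \pi^{\Sigma_{0}}_{M, >_{\psi'}}(\psi'_{\gg}, \ul', \ueta') \neq 0
\]
together with the dominating-level embedding
\[
\pi^{\Sigma_{0}}_{M, >_{\psi}}(\psi_{\gg}, \ul, \ueta) \hookrightarrow \times_{i < m} \langle -1/2, \cdots, -A_i \rangle \rtimes \pi^{\Sigma_{0}}_{M, >_{\psi'}}(\psi'_{\gg}, \ul', \ueta').
\]

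The second step is to descend to the original parameters by applying $\circ_{i > m} \Jac_{X_i}$ (with $i$ decreasing), where $X_i$ is the rectangle with entries from $B_i + T_i$ down to $A_i + 1$. On the left this recovers $\pi^{\Sigma_{0}}_{M, >_{\psi}}(\psi, \ul, \ueta)$. The crux on the right is to show that the Jacquet composition acts only on the $\pi^{\Sigma_{0}}_{M, >_{\psi'}}(\psi'_{\gg}, \ul', \ueta')$ factor, converting it into $\pi^{\Sigma_{0}}_{M, >_{\psi'}}(\psi', \ul', \ueta')$, while passing intact through the Steinberg factors $\langle -1/2, \cdots, -A_j \rangle$ with $j < m$. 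As in the first case, I would argue by contradiction: any alternative decomposition would produce a forbidden nonvanishing $\Jac_y \, \pi^{\Sigma_{0}}_{M, >_{\psi'}}(\psi'_{>}, \ul', \ueta') \neq 0$ for some value $y$ incompatible with the far-away structure of $\psi'_{>}$, using the inequality $A_i + 1 > A_m \geqslant A_j$ for all $i > m$ and $j < m$.

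The principal obstacle, as in the first case, is the combinatorial bookkeeping in this Jacquet descent and the verification that the prescribed $(\ul', \ueta')$ (namely $l'_i = 0$ for $i \leqslant m$, $\eta'_1 = -1$, and $\eta'_{i+1} = (-1)^{A_i - 3/2}\eta'_i$) emerges exactly from applying the sign-change proposition to the first $m$ blocks and propagating the induced sign relations. I expect this case to be somewhat cleaner than the first, because the absence of a Steinberg segment for $i = m$ eliminates the most delicate truncation analysis that arose in the first-case argument; the remaining Steinberg segments have $A_j \leqslant A_m$ and are uniformly shielded from the dominating Jacquet functors. Once the descent is established, the desired embedding at the original level follows immediately.
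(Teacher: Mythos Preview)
Your plan diverges from the paper's proof and, as written, has a genuine gap.

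The paper does \emph{not} rerun the dominate/Jacquet-descent argument of the first-case lemma. Instead it uses that lemma as a black box and iterates. The proof starts from the hypothesis $l_{m}=0,\ \eta_{m}=-1$: this is precisely what permits the sign of the $m$-th block to be flipped \emph{for free}, i.e.\ as an equality
\[
\pi^{\Sigma_{0}}_{M,>_{\psi}}(\psi,\ul,\ueta)=\pi^{\Sigma_{0}}_{M,>_{\psi^{(1)}}}(\psi^{(1)},\ul^{(1)},\ueta^{(1)}),
\]
with no Steinberg factor produced. One then computes $\eta^{(1)}_{m-1}=1$, which places the remaining blocks $l,\dots,m-1$ (all sharing the same $A$) into the \emph{first} case; Lemma~\ref{lemma: change sign} is applied to that batch, a further sign computation shows $\eta^{(m-l+1)}_{l-1}=1$, and the process repeats down to index $1$. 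The final computation $\eta^{(m)}_{1}=-1$ identifies $(\ul^{(m)},\ueta^{(m)})$ with the prescribed $(\ul',\ueta')$.

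Your plan instead dominates for $i>m$ and invokes Proposition~\ref{prop: change sign} ``to the first $m$ blocks''. But that proposition requires the blocks being sign-changed to satisfy $A_{m}=\cdots=A_{1}$, which fails here in general (the whole point of the sequence $s_{0}<\cdots<s_{l}$ is that the $A_{i}$ jump). Applying it would force you to work one equal-$A$ batch at a time, which is exactly the paper's iteration --- but you have not set this up, nor identified the crucial sign checks ($\eta^{(1)}_{m-1}=1$, etc.) that make each successive application of the first-case lemma legitimate. You also do not explain the mechanism behind the ``merely sign-flipped'' block $m$: Proposition~\ref{prop: change sign} always increases $A$ by one and emits a Steinberg factor, so it cannot account for the missing Steinberg at $i=m$; the equality above, resting on $l_{m}=0$ and $\eta_{m}=-1$, is what does this. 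Finally, once you organize the argument as an iteration of the first-case lemma, the dominate/Jacquet-descend layer for $i>m$ is unnecessary, since that lemma already delivers the embedding at the original (non-dominated) level.
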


\begin{proof}

Let $\psi^{(k)}$ be the parameter by changing $(\rho, A_{m}, 1/2, \zeta_{m})$ to $(\rho, A_{m}, 1/2, -\zeta_{m})$, and $(\rho, A_{i}, 1/2, \zeta_{i})$ to $(\rho, A_{i}-1, 1/2, -\zeta_{i})$ for $m-k < i < m$. We will also change the order to $>_{\psi^{(k)}}$ by moving these Jordan blocks to the front of the last $m$ Jordan blocks as $i$ goes from $m-k+1$ to $m$. The representations inside the corresponding packets are parametrized by $(\ul^{(k)}, \ueta^{(k)})$ with respect to $>_{\psi^{(k)}}$.

Since $l_{m} = 0$ and $\eta_{m} = -1$, 
\[
\pi^{\Sigma_{0}}_{M, >_{\psi}}(\psi, \ul, \ueta) = \pi^{\Sigma_{0}}_{M, >_{\psi^{(1)}}}(\psi^{(1)}, \ul^{(1)}, \ueta^{(1)}) 
\]
where $(\ul^{(1)}, \ueta^{(1)})$ satisfies
\begin{align*}
\eta^{(1)}_{i} = (-1)^{A_{m} + \frac{1}{2}}\eta_{i} \quad \text{ for $i < m$. }
\end{align*}
Since $l^{(1)}_{i} = 0$ for $i < m$, we also have
\[
\eta^{(1)}_{i} = (-1)^{A_{i+1} - \frac{1}{2}}\eta^{(1)}_{i+1} \quad \text{ for $i < m-1$. }
\]
We compute
\[
\eta^{(1)}_{m-1} = (-1)^{A_{m} + \frac{1}{2}}\eta_{m-1} = (-1)^{A_{m} + \frac{1}{2}} (-1)^{A_{m} - \frac{1}{2}} \eta_{m} = 1.
\]
So we can apply Lemma~\ref{lemma: change sign}. Choose $l \leqslant m-1$ such that 
\[
A_{m-1} = \cdots = A_{l} \text{ with } A_{l} > A_{l-1} \text{ or } l = 1.
\]
Then
\[
\pi^{\Sigma_{0}}_{M, >_{\psi^{(1)}}}(\psi^{(1)}, \ul^{(1)}, \ueta^{(1)}) \hookrightarrow \times_{i = l}^{m-1} \langle -1/2, \cdots, -A_{i} \rangle \rtimes \pi^{\Sigma_{0}}_{M, >_{\psi^{(m - l + 1)}}}(\psi^{(m - l + 1)}, \ul^{(m - l + 1)}, \ueta^{(m - l + 1)}),
\]
To go further, we need to compute 
\begin{align*}
\eta^{(m - l + 1)}_{l-1} = \eta^{(1)}_{l-1} \prod_{i=l}^{m-1}(-1)^{A_{i} - \frac{1}{2}} = \prod_{i = l}^{m-1}(-1)^{A_{i} - \frac{1}{2}} \prod_{i=l}^{m-1}(-1)^{A_{i} - \frac{1}{2}} = 1
\end{align*}
This means we can repeat the previous process. It is not hard to see that one gets eventually
\[
\pi^{\Sigma_{0}}_{M, >_{\psi^{(m - l + 1)}}}(\psi^{(m - l + 1)}, \ul^{(m - l + 1)}, \ueta^{(m - l + 1)}) \hookrightarrow \times_{i< l} \langle -1/2, \cdots, -A_{i} \rangle \rtimes \pi^{\Sigma_{0}}_{M, >_{\psi^{(m)}}}(\psi^{(m)}, \ul^{(m)}, \ueta^{(m)})
\]
As a result, we get
\[
\pi^{\Sigma_{0}}_{M, >_{\psi}}(\psi, \ul, \ueta) \hookrightarrow \times_{i< m} \langle -1/2, \cdots, -A_{i} \rangle \rtimes \pi^{\Sigma_{0}}_{M, >_{\psi^{(m)}}}(\psi^{(m)}, \ul^{(m)}, \ueta^{(m)})
\]
By definition $\psi^{(m)} = \psi'$. So it remains to show $(\ul^{(m)}, \ueta^{(m)}) = (\ul', \ueta')$. It is clear that $l^{(m)}_{i} = 0$ for $i \leqslant m$. Hence,
\[
\eta^{(m)}_{i+1}  = (-1)^{A_{i} - \frac{3}{2}} \eta_{i}^{(m)} \quad \text{ for $i < m$. }
\]
By Lemma~\ref{lemma: change sign}, 
\[
\eta^{(m)}_{1} = -\eta^{(m-k)}_{k}
\]
for maximal $k < m$ such that $A_{1} = A_{i}$ for all $i \leqslant k$. From the above discussion, we see $\eta^{(m-k)}_{k} = 1$. So $\eta^{(m)}_{1} = -1$. Hence, $\ueta^{(m)} = \ueta'$.

\end{proof}

Combining the two cases, we can describe $\Pkt{\q}^{\Sigma_{0}}$ under the assumption \eqref{eq: change sign assumption} as follows. Consider the maximal sequence of integers
\[
0 = s_{0} < s_{1} < \cdots < s_{l} = m
\]
such that $A_{s_{j}} \neq A_{s_{j} + 1}$. For any $0 \leqslant k \leqslant l$, we get a new parameter $\q'_{k}$ by replacing all $(\rho, A_{i}, B_{i}, \zeta_{i})$ by $(\rho, A'_{i}, B'_{i}, \zeta'_{i})$ such that: $\zeta'_{i} = +$ and
\begin{itemize}

\item $A'_{i} = A_{i}, B'_{i} = B_{i}$ for $i > m$;

\item $A'_{i} = A_{i} - 1, B'_{i} = 1/2$ for $i \leqslant m$ and $i \neq s_{k}$;

\item $A'_{i} = A_{i}, B'_{i} = 1/2$ for $i = s_{k}$.

\end{itemize}
Then we can divide $\Pkt{\q}^{\Sigma_{0}}$ into $l + 1$ classes, i.e.,
\[
\Pkt{\q}^{\Sigma_{0}} = \sqcup_{k = 0}^{l} \, \Pkt{\q}^{\Sigma_{0}}(k).
\]
For any $0 \leqslant k \leqslant l$, we can get an injection 
\[
\Pkt{\q}^{\Sigma_{0}}(k) \hookrightarrow \Pkt{\q'_{k}}^{\Sigma_{0}}, \quad \pi^{\Sigma_{0}} \mapsto \pi^{\Sigma_{0}}_{M, >_{\q'_{k}}}(\q'_{k}, \ul', \ueta'),
\]
such that 
\begin{align}
\label{eq: change all sign}
\pi^{\Sigma_{0}} \hookrightarrow \times_{ i \leqslant m \text{ and } i \neq s_{k}}  \langle -1/2, \cdots, -A_{i} \rangle \rtimes \pi^{\Sigma_{0}}_{M, >_{\q'_{k}}}(\q'_{k}, \ul', \ueta').
\end{align}
The image is characterized by the condition that for all $i \leqslant s_{k}$,
\begin{itemize}

\item $l'_{i} = 0$;

\item $\eta'_{i} = - \prod_{j < i} (-1)^{A_{j} + \frac{1}{2}}$.

\end{itemize}
Because of the first condition, the second condition can be simplified as $\eta'_{1} = -1$ when $k \neq 0$. 

After applying \eqref{eq: step two} and \eqref{eq: step one} to $\pi^{\Sigma_{0}}_{M, >_{\q'_{k}}}(\q'_{k}, \ul', \ueta')$, we get 
\begin{align}
\label{eq: change all sign reduction}
\pi^{\Sigma_{0}} \hookrightarrow \times_{ i \leqslant m \text{ and } i \neq s_{k}}  \langle -1/2, \cdots, -A_{i} \rangle \rtimes \mathcal{I}'
\end{align}
and
\begin{align}
\label{eq: change all sign reduction 1}
\mathcal{I}' := \times_{i = 1}^{n} \underbrace{\begin{pmatrix}
              B'_{i} & \cdots & -A'_{i}\\
              \vdots &  & \vdots \\
              B'_{i} + l'_{i} - 1 & \cdots & -(A'_{i} - l'_{i} + 1) \end{pmatrix}}_{II_{i}}
\times \times_{\text{ some } j} \underbrace{\begin{pmatrix}
              B'_{j+1} +l'_{j+1} & \cdots & -(A'_{j} - l'_{j}) \\
              \vdots &  & \vdots \\
              t'_{j} - \delta_{j} & \cdots & -(t'_{j} + \delta_{j}) \end{pmatrix}}_{\tilde{I}_{j}} \rtimes \, \sigma^{\Sigma_{0}}
\end{align}
Note if $\langle x, \cdots, -y \rangle$ from $II_{i}$ or $\tilde{I}_{j}$ has shift less than that of $\langle -1/2, \cdots, -A_{i}\rangle$, then it is necessary that $y \geqslant A_{i}$. So they are interchangeable (cf. \cite[Corollary 4.3]{Xu:Comb}). This shows the induced representation in \eqref{eq: change all sign reduction} is a subrepresentation of the costandard representation as we want. As a consequence, the induced representation in \eqref{eq: change all sign reduction} has a unique irreducible subrepresentation. Therefore, the same is true for that of \eqref{eq: change all sign}.

\subsubsection{Resolution}

Now we can complete the proof of Theorem~\ref{thm: step three half-integral}. By Lemma~\ref{lemma: step three} and induction assumption, we have
\begin{align}
\label{eq: step three half-integral reduction}
\pi^{\Sigma_{0}} \hookrightarrow & \times_{i = l}^{s} \langle -B_{i}, \cdots, -A_{i} \rangle \rtimes \mathcal{I}^{s}
\end{align}
and
\begin{align*}
\mathcal{I}^{s} :=  & \times_{i = l}^{s} \underbrace{\begin{pmatrix}
              -(B_{i} - 1) & \cdots & -(A_{i} - 1) \\
              \vdots &  & \vdots \\
              -1/2 & \cdots & -(A_{i} - B_{i} +1/2) \end{pmatrix}}_{III_{i}} \times 
\times_{\{i < l \text{ or } m \geqslant i > s\} \backslash \{ s_{k} \}} 
\underbrace{\begin{pmatrix}
              -B_{i} & \cdots & -A_{i}\\
              \vdots &  & \vdots \\
               -1/2 & \cdots & -(A_{i} - B_{i} +1/2) \end{pmatrix}}_{III_{i}} \\
& \times \underbrace{\begin{pmatrix}
              -B_{s_{k}} & \cdots & -A_{s_{k}}\\
              \vdots &  & \vdots \\
               -3/2 & \cdots & -(A_{s_{k}} - B_{s_{k}} +3/2)
       \end{pmatrix}}_{III_{s_{k}}} \rtimes \, \mathcal{I}'
\end{align*}
if $s \neq s_{k}$, and
\begin{align*}
\mathcal{I}^{s} := & \times_{i = l}^{s-1} \underbrace{\begin{pmatrix}
              -(B_{i} - 1) & \cdots & -(A_{i} - 1) \\
              \vdots &  & \vdots \\
               -1/2 & \cdots & -(A_{i} - B_{i} +1/2) \end{pmatrix}}_{III_{i}} \times 
\times_{i < l \text{ or } m \geqslant i > s} 
\underbrace{\begin{pmatrix}
              -B_{i} & \cdots & -A_{i}\\
              \vdots &  & \vdots \\
              -1/2 & \cdots & -(A_{i} - B_{i} +1/2) \end{pmatrix}}_{III_{i}} \\
& \times \underbrace{\begin{pmatrix}
              -(B_{s_{k}} - 1) & \cdots & -(A_{s_{k}} - 1)\\
              \vdots &  & \vdots \\
               -3/2 & \cdots & -(A_{s_{k}} - B_{s_{k}} +3/2)
       \end{pmatrix}}_{III_{s_{k}}} \rtimes \, \mathcal{I}'
\end{align*}
if $s = s_{k}$. Here $\mathcal{I}'$ is defined as in \eqref{eq: change all sign reduction 1}. Moreover, $\mathcal{I}^{s}$ is a subrepresentation of the costandard representation, obtained by taking induction of the shifted Steinberg representations from the shifted Speh representations with $\sigma^{\Sigma_{0}}$. We claim the induced representation in \eqref{eq: step three half-integral reduction} is a subrepresentation of the costandard representation as we want. 

To prove the claim, we need to show any shifted Steinberg representation above, whose shift is less than that of $\langle -B_{s}, \cdots, -A_{s} \rangle$, can be moved to the front. There are two cases.
\begin{enumerate}
\item If it is in the form $\langle -x, \cdots, -y \rangle$ from $III_{i}$, then by our choice of $s$, 
\[
\begin{cases} x \geqslant B_{s} & \text{ if } y \leqslant A_{s}, \\
x = 1/2 < B_{s} & \text{ if } y > A_{s}. \end{cases}
\] 
In either case, $\langle -x, \cdots, -y \rangle$ and $\langle -B_{s}, \cdots, -A_{s} \rangle$ are interchangeable.

\item If it is in the form $\langle x, \cdots, -y \rangle$ from $II_{i}$ or $\tilde{I}_{j}$ in \eqref{eq: change all sign reduction 1}, then we have $y \geqslant A_{s}$. Hence, $\langle x, \cdots, -y \rangle$ and $\langle -B_{s}, \cdots, -A_{s} \rangle$ are interchangeable (cf. \cite[Corollary 4.3]{Xu:Comb}).
\end{enumerate}
This finishes the proof of our claim. As a consequence, the induction in \eqref{eq: step three half-integral reduction} has a unique irreducible subrepresentation. So we can combine $\langle B_{i}, \cdots, -A_{i} \rangle$ with $III_{i}$ for $l \leqslant i \leqslant s$, and this gives \eqref{eq: step three half-integral}.

\section{Comments on the general case}
\label{sec: general}

Let $\q$ be an Arthur parameter of $G(F)$ (cf. \eqref{eq: local A-parameter}) with the assumption that all $b_{i} = b$. Note we do not assume \eqref{eq: fix rho} here. Let $\q_{np}$ be any representation of $W_{F} \times SL(2, \mathbb{C}) \times SL(2, \mathbb{C})$ such that 
\[
\q = \q_{np} \oplus \q_{p} \oplus \q_{np}^{\vee},
\]
where $\q_{np}^{\vee}$ is the dual of $\q_{np}$. M{\oe}glin \cite[Theorem 6]{Moeglin1:2006} proved that there is a bijection
\[
\Pkt{\q}^{\Sigma_{0}} \rightarrow \Pkt{\q_{p}}^{\Sigma_{0}}, \quad \pi^{\Sigma_{0}} \mapsto \tau^{\Sigma_{0}}.
\]
such that
\[
\pi^{\Sigma_{0}} = \Big( \times_{(\rho_{i}, a_{i}, b_{i}) \in Jord(\q_{np})} Sp(St(\rho_{i}, a_{i}), b_{i}) \Big) \rtimes \tau^{\Sigma_{0}}
\]
We can embed $\tau^{\Sigma_{0}}$ into a costandard representation, which is an induction of shifted Steinberg representations and a tempered representation of a group of the same type as $G(F)$. Note these shifted Steinberg representations are interchangeable with the ones from $Sp(St(\rho_{i}, a_{i}), b_{i})$ for $(\rho_{i}, a_{i}, b_{i}) \in Jord(\q_{np})$ by the parity condition. So the complete Langlands parameter $(\phi, \epsilon)$ of $\pi^{\Sigma_{0}}$ will be given as
\[
\phi = (\oplus_{i} \phi_{i}) \oplus \phi' \oplus (\oplus_{i} \phi^{\vee}_{i})
\]
where $\phi_{i}$ is the Langlands parameter of $Sp(St(\rho_{i}, a_{i}), b_{i})$ for $(\rho_{i}, a_{i}, b_{i}) \in Jord(\q_{np})$, $(\phi', \epsilon')$ is the complete Langlands parameter of $\tau^{\Sigma_{0}}$ and $\epsilon$ corresponds to $\epsilon'$ under the canonical isomorphism $\mathcal{S}^{\Sigma_{0}}_{\phi} \cong \mathcal{S}^{\Sigma_{0}}_{\phi'}$. 

At last, we can extend our main results (Theorem~\ref{thm: push integral}, ~\ref{thm: push half-integral}, ~\ref{thm: special case}) to $\q_{p}$ by applying them to each $\rho$ appearing in $Jord(\q_{p})$. For the proofs, it suffices to modify the induction assumptions in the proofs of Theorem~\ref{thm: step one}, ~\ref{thm: step two}, ~\ref{thm: step three integral}, ~\ref{thm: step three half-integral} by considering all Jordan blocks of $\q_{p}$, and apply Theorem~\ref{thm: nonvanishing 2} for the nonvanishing result in the special case (cf. Section~\ref{sec: step one}, ~\ref{sec: step two}).


\appendix

\section{A nonvanishing result}
\label{sec: nonvanishing}

In this appendix, we will prove the following nonvanishing result. Let $\q$ be an Arthur parameter of $G(F)$ (cf. \eqref{eq: local A-parameter}) under the assumption \eqref{eq: fix rho}. Let $>_{\q}$ be an admissible order and we index the Jordan blocks in $Jord(\q)$ such that 
\[
(\rho, A_{i+1}, B_{i+1}, \zeta_{i+1}) >_{\q} (\rho, A_{i}, B_{i}, \zeta_{i}).
\]
Let
\[
J := \cup_{i = 1}^{n}\{(\rho, A_{i}, B_{i}, \zeta_{i})\} \subseteq Jord(\psi)
\] 
Suppose  
\[
A_{i+1} \geqslant A_{i}, \quad B_{i+1} \geqslant B_{i}, \quad \zeta_{i+1} = \zeta_{i} \quad \text{ for } i < n
\]
and
\[
J^{c} \gg J, \quad J^{c} \text{ has discrete diagonal restriction},
\]
where $J^{c} := Jord(\q) \backslash J$. Then we have the following theorem.

\begin{theorem}
\label{thm: nonvanishing}
$\r^{\Sigma_{0}}_{M, >_{\psi}}(\psi, \ul, \ueta) \neq 0$ if and only if the following condition are satisfied for all $i < n$:
\begin{align}
\label{eq: nonvanishing}
\begin{cases}
\eta_{i+1} = (-1)^{A_{i} - B_{i}}\eta_{i}       & \Rightarrow A_{i+1} - l_{i+1} \geqslant A_{i} - l_{i}, \quad B_{i+1} + l_{i+1} \geqslant B_{i} + l_{i},  \\
\eta_{i+1} \neq (-1)^{A_{i} - B_{i}}\eta_{i}  & \Rightarrow B_{i+1} + l_{i+1} > A_{i} - l_{i}
\end{cases}  
\end{align}
\end{theorem}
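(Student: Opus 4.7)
My approach is induction on the number $n$ of Jordan blocks in $J$, combined with Moeglin's Jacquet module criterion for nonvanishing and the change-of-order formulas recalled in Section~\ref{sec: review}.

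The first step is to pass to a dominating parameter. Let $\psi_{\gg}$ be obtained from $\psi$ by shifting each $(\rho, A_{i}, B_{i}, \zeta)$ with $i \in J$ to $(\rho, A_{i} + T_{i}, B_{i} + T_{i}, \zeta)$, where $T_{1} \ll T_{2} \ll \cdots \ll T_{n}$ are chosen so that the shifted intervals are pairwise disjoint, $J^{c}$ remains far away from the shifted $J$, and $>_{\psi}$ remains admissible. Since $\psi_{\gg}$ has discrete diagonal restriction, Moeglin's injection \eqref{eq: parametrization} is a bijection, so $\pi^{\Sigma_{0}}_{M,>_{\psi}}(\psi_{\gg}, \ul, \ueta)$ is nonzero for every $(\ul, \ueta)$ satisfying the sign condition \eqref{eq: sign conditin general}. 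By \eqref{eq: push back},
\[
\pi^{\Sigma_{0}}_{M,>_{\psi}}(\psi, \ul, \ueta) \;=\; \Big( \circ_{i = n}^{1} \Jac_{X_{i}} \Big) \, \pi^{\Sigma_{0}}_{M,>_{\psi}}(\psi_{\gg}, \ul, \ueta),
\]
where $X_{i}$ is the shift matrix associated to the $i$-th Jordan block, so the question becomes precisely when this iterated Jacquet module is nonzero.

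For necessity, I would argue by contradiction: assume \eqref{eq: nonvanishing} fails at some adjacent pair $(i, i+1)$ while $\pi^{\Sigma_{0}}_{M,>_{\psi}}(\psi, \ul, \ueta) \neq 0$, and apply the change-of-order formula from Section~\ref{sec: review} to transpose these two blocks. When $A_{i+1} = A_{i}$ or $B_{i+1} = B_{i}$ the intervals are nested and case (1) applies directly. Otherwise they overlap without inclusion; I would first shift the $i$-th block upward to force nesting, apply case (1)(a)--(c), and then pull back using \eqref{eq: push back}. In each subcase, unwinding the transformation rules shows that a violation of \eqref{eq: nonvanishing} forces the transposed parameter $(\ul', \ueta')$ to have a component $l'_{j}$ outside the admissible range $[0, (A-B+1)/2]$ for its block, contradicting the hypothesis that both representations are nonzero. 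For sufficiency, assuming \eqref{eq: nonvanishing} holds at every pair, I would construct $\pi^{\Sigma_{0}}_{M,>_{\psi}}(\psi, \ul, \ueta)$ explicitly as the unique irreducible subrepresentation of an induced module, following the strategy of Sections~\ref{sec: step one} and~\ref{sec: step two}: first reduce to the tempered case $\ul = 0$ by stripping off the Speh constituents one interval at a time in the manner of Lemma~\ref{lemma: step two}, then handle the tempered residue via the embedding of Theorem~\ref{thm: step one}. The conditions \eqref{eq: nonvanishing} are exactly what is needed for each intermediate induction to have a nonzero unique irreducible subrepresentation, and the arguments of Sections~\ref{sec: step one} and~\ref{sec: step two} only use this direction, avoiding circularity.

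The main obstacle is the overlap-without-inclusion case in the necessity direction, where the change-of-order formulas of Section~\ref{sec: review} do not apply directly. Passing through an auxiliary nested configuration and then tracking both how the shift alters $(\ul, \ueta)$ and which column of which $X_{j}$ produces the vanishing in the iterated Jacquet module is the most technical part. This is precisely what the pull/expand framework of \cite[Section 8]{Xu:Comb} was designed to handle, and I would closely follow it: the pull operation rearranges $J^{c}$ into a configuration compatible with the $\Jac_{X_{i}}$, and the expand operation produces the needed nesting without changing the nonvanishing status, so that the necessity analysis can be carried out entirely within the regime where the change-of-order formulas apply verbatim.
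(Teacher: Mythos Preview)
Your sufficiency argument has a genuine circularity problem that you have not resolved. Sections~\ref{sec: step one} and~\ref{sec: step two} begin by \emph{invoking} Theorem~\ref{thm: nonvanishing} to know which $(\ul,\ueta)$ give nonzero representations, and the embeddings in Lemmas~\ref{lemma: step one} and~\ref{lemma: step two} are proved under the standing hypothesis that the left-hand side is already nonzero. You propose to reverse the logic: build the induced representation on the right, take its unique irreducible subrepresentation, and declare it to be $\pi^{\Sigma_{0}}_{M,>_{\psi}}(\psi,\ul,\ueta)$. But $\pi^{\Sigma_{0}}_{M,>_{\psi}}(\psi,\ul,\ueta)$ is \emph{defined} by \eqref{eq: push back} as an iterated Jacquet module of the dominating representation; the content of nonvanishing is precisely that this Jacquet module does not kill everything. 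Exhibiting some irreducible subrepresentation of an induced module does not by itself show that it coincides with the output of \eqref{eq: push back}. You would need to verify, for each intermediate step, that the unique irreducible sub survives the relevant $\Jac_{X_{i}}$, and that is essentially the original problem. Moreover, you assert that condition~\eqref{eq: nonvanishing} for $\psi$ implies the analogous condition for every intermediate parameter $\psi'$ produced by Lemma~\ref{lemma: step two}; this is not automatic and you have not checked it.

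The paper's proof is structurally different. Necessity is simply quoted from \cite[Lemma~5.5]{Xu:Comb}, so your elaborate contradiction argument via change-of-order is unnecessary. For sufficiency the paper inducts on $|J|$, not on $\sum l_{i}$: the base case $|J|=2$ is \cite[Proposition~5.2]{Xu:Comb}, and for $|J|=m+1$ one first \emph{expands} the top block so that $B_{m+1}=B_{m}$, then \emph{pulls} the top two blocks far away, reducing to three auxiliary nonvanishing statements each covered by the induction hypothesis. The only nontrivial point is that after swapping the top two blocks via the change-of-order formula, the transformed pair $(\ul',\ueta')$ still satisfies \eqref{eq: nonvanishing} relative to block $m-1$; this is handled by a separate case-by-case lemma. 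Your proposal mentions pull and expand only for the necessity direction; for sufficiency you would be better served by following this induction on $|J|$ directly rather than trying to bootstrap from Sections~\ref{sec: step one}--\ref{sec: step two}.
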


\begin{proof}
The necessity of the condition follows from \cite[Lemma 5.5]{Xu:Comb}. So it remains to prove its sufficiency. We will proceed by induction on $|J|$. If $|J| = 2$, this has been proved in \cite[Proposition 5.2]{Xu:Comb}. 

Suppose $|J| = m+1$. We first ``expand" $[B_{m+1}, A_{m+1}]$ to $[B^{*}_{m+1}, A^{*}_{m+1}]$ (cf. \cite[Section 7.2]{Xu:Comb}), so that $B_{m+1}^{*} = B_{m}$. By \cite[Proposition 7.4]{Xu:Comb}, we know $\r^{\Sigma_{0}}_{M, >_{\psi}}(\psi, \ul, \ueta) \neq 0$ if and only if
\begin{align}
\label{eq: expand}
\r^{\Sigma_{0}}_{M, >_{\psi}} \Big(\psi_{-}, \ul_{-}, \ueta_{-}; (\rho, A^{*}_{m+1}, B^{*}_{m+1}, l^{*}_{m+1}, \eta_{m+1}, \zeta_{m+1}) \Big) \neq 0
\end{align}
where $\psi_{-}$ is defined by
\[
Jord(\psi_{-}) = Jord(\psi) \backslash \{(\rho, A_{m+1}, B_{m+1}, \zeta_{m+1})\}
\]
and 
\[
l^{*}_{m+1} = l_{m+1} + (B_{m+1} - B_{m}).
\]
It is easy to check that the condition \eqref{eq: nonvanishing} holds for $\r^{\Sigma_{0}}_{M, >_{\psi}}(\psi, \ul, \ueta)$ if and only if it holds for the representation in \eqref{eq: expand}. So we will assume $B_{m+1} = B_{m}$ from now on.

Next we can ``pull" $[B_{m+1}, A_{m+1}], [B_{m}, A_{m}]$ (cf. \cite[7.1]{Xu:Comb}), so that they are far away from $\cup_{i < m}\{(\rho, A_{i}, B_{i}, \zeta_{i})\}$. By \cite[Proposition 7.1, 7.3]{Xu:Comb}, we know $\r^{\Sigma_{0}}_{M, >_{\psi}}(\psi, \ul, \ueta) \neq 0$ if the following representations are all nonzero. So it suffices to show each of them is nonzero by our induction assumption. Let $\psi_{-}$ be defined by
\[
Jord(\psi_{-}) = Jord(\psi) \backslash \{(\rho, A_{m+1}, B_{m+1}, \zeta_{m+1}), (\rho, A_{m}, B_{m}, \zeta_{m})\}.
\]

\begin{enumerate}

\item Show 
\begin{align}
\label{eq: pull 1}
\r^{\Sigma_{0}}_{M, >_{\psi}} \Big(\psi_{-}, \ul_{-}, \ueta_{-}; (\rho, A_{m+1} + T, B_{m+1} + T, l_{m+1}, \eta_{m+1}, \zeta_{m+1}), (\rho, A_{m} +T, B_{m} + T, l_{m}, \eta_{m}, \zeta_{m})\Big) \neq 0
\end{align}
for some $T$. Let $J_{-} = Jord(\psi_{-})$. Then we will choose $T$ so that $J^{c}_{-} \gg J_{-}$. To make $J^{c}_{-}$ having discrete diagonal restriction, we will shift $[B_{m+1} + T, A_{m+1} + T]$ further to $[B_{m+1} + T', A_{m+1} + T']$ such that $B_{m+1} + T' > A_{m} +T$. Then by our induction assumption,
\[
\r^{\Sigma_{0}}_{M, >_{\psi}} \Big(\psi_{-}, \ul_{-}, \ueta_{-}; (\rho, A_{m+1} + T', B_{m+1} + T', l_{m+1}, \eta_{m+1}, \zeta_{m+1}), (\rho, A_{m} +T, B_{m} + T,  l_{m}, \eta_{m}, \zeta_{m}) \Big) \neq 0
\]
Let $\psi_{\gg}$ be the dominating parameter with discrete diagonal restriction, obtained by shifting $[B_{i}, A_{i}]$ to $[B_{i} + T_{i}, A_{i} + T_{i}]$ with $A_{i} + T_{i} < B_{m} + T$ for all $1 \leqslant i \leqslant m - 1$. Then
\begin{align*}
\r^{\Sigma_{0}}_{M, >_{\psi}}(\psi_{\gg}, \ul, \ueta) \hookrightarrow \times_{i < m} \begin{pmatrix}
              \zeta_{i}(B_{i} + T_{i}) & \cdots & \zeta_{i}(B_{i} + 1) \\
              \vdots &  & \vdots \\
              \zeta_{i}(A_{i} + T_{i}) & \cdots & \zeta_{i}(A_{i} + 1) \end{pmatrix} \rtimes \r^{\Sigma_{0}}_{M, >_{\psi}}\Big(\psi_{-}, \ul_{-}, \ueta_{-}; \\
(\rho, A_{m+1} + T', B_{m+1} + T', l_{m+1}, \eta_{m+1}, \zeta_{m+1}), (\rho, A_{m} +T, B_{m} + T,  l_{m}, \eta_{m}, \zeta_{m}) \Big)
\end{align*}
By \cite[Proposition 5.2]{Xu:Comb}, 
\[
\Jac_{(\rho, A_{m+1} + T', B_{m+1} + T', \zeta_{m+1}) \mapsto (\rho, A_{m+1} + T, B_{m+1} + T, \zeta_{m+1})} \r^{\Sigma_{0}}_{M, >_{\psi}}(\psi_{\gg}, \ul, \ueta) \neq 0.
\]
So after we apply the same Jacquet functor to the full induced representation above, we should get something nonzero. Since $B_{m+1} + T + 1 > A_{i} + T_{i}$ for $i < m$, the result is 
\begin{align*}
& \times_{i < m} \begin{pmatrix}
              \zeta_{i}(B_{i} + T_{i}) & \cdots & \zeta_{i}(B_{i} + 1) \\
              \vdots &  & \vdots \\
              \zeta_{i}(A_{i} + T_{i}) & \cdots & \zeta_{i}(A_{i} + 1) \end{pmatrix} \rtimes \Jac_{(\rho, A_{m+1} + T', B_{m+1} + T', \zeta_{m+1}) \mapsto (\rho, A_{m+1} + T, B_{m+1} + T, \zeta_{m+1})} \\ 
& \r^{\Sigma_{0}}_{M, >_{\psi}} \Big(\psi_{-}, \ul_{-}, \ueta_{-};
(\rho, A_{m+1} + T', B_{m+1} + T', l_{m+1}, \eta_{m+1}, \zeta_{m+1}), (\rho, A_{m} +T, B_{m} + T,  l_{m}, \eta_{m}, \zeta_{m}) \Big) \neq 0
\end{align*}
This shows \eqref{eq: pull 1}.

\item Show 
\begin{align}
\label{eq: pull 2}
\r^{\Sigma_{0}}_{M, >_{\psi}} \Big(\psi_{-}, \ul_{-}, \ueta_{-}; (\rho, A_{m+1} + T, B_{m+1} + T, l_{m+1}, \eta_{m+1}, \zeta_{m+1}), (\rho, A_{m}, B_{m}, l_{m}, \eta_{m}, \zeta_{m})\Big) \neq 0
\end{align}
for some $T$. Let $J_{-} = Jord(\psi_{-}) \sqcup \{(\rho, A_{m}, B_{m}, \zeta_{m})\}$. We can choose $T$ so that $J_{-}^{c} \gg J_{-}$. Then the statement follows from our induction assumption immediately.

\item Show 
\begin{align}
\label{eq: pull 3}
\r^{\Sigma_{0}}_{M, >'_{\psi}}\Big(\q_{-}, \ul'_{-}, \ueta'_{-}; (\rho, A_{m+1}, B_{m+1}, l'_{m+1}, \eta'_{m+1}, \zeta_{m+1}), (\rho, A_{m} + T, B_{m} +T , l'_{m}, \eta'_{m}, \zeta_{n-1})\Big) \neq 0
\end{align}
for some $T$, where $>'_{\psi}$ is obtained by switching $(\rho, A_{m+1}, B_{m+1}, \zeta_{m+1})$ with $(\rho, A_{m}, B_{m}, \zeta_{m})$, and $(\ul', \ueta') = S^{+}_{m+1}(\ul, \ueta)$ (cf. \cite[Section 6.1]{Xu:Comb}) given by the change of order formula. Let $J_{-} = Jord(\psi_{-}) \sqcup \{(\rho, A_{m+1}, B_{m+1}, \zeta_{m+1})\}$. We can choose $T$ so that $J_{-}^{c} \gg J_{-}$. Then the statement follows from our induction assumption again, provided we can verify the representation in \eqref{eq: pull 3} satisfies \eqref{eq: nonvanishing}. Indeed, we only need to show 
\begin{align}
\label{eq: transmit}
\begin{cases}
\eta'_{m+1} = (-1)^{A_{m-1} - B_{m-1}} \eta_{m-1}    & \Rightarrow A_{m +1} - l'_{m +1} \geqslant A_{m-1} - l_{m-1}, \quad B_{m +1} + l'_{ m+1} \geqslant B_{m-1} + l_{m-1},  \\
\eta'_{m+1} \neq (-1)^{A_{m-1} - B_{m-1}} \eta_{m-1}  & \Rightarrow B_{m +1} + l'_{m +1} > A_{m-1} - l_{m-1}.   
\end{cases}  
\end{align}
We leave it to the next lemma.
\end{enumerate}
\end{proof}

\begin{lemma}
\eqref{eq: transmit} holds.
\end{lemma}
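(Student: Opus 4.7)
The plan is to invoke the change-of-order formula from Section~\ref{sec: review} applied to the pair $(\rho,A_{m+1},B_{m+1},\zeta_{m+1}), (\rho,A_m,B_m,\zeta_m)$. At this point in the main proof we have reduced to $B_{m+1}=B_m$, and by hypothesis $\zeta_{m+1}=\zeta_m$, so $[B_m,A_m]\subseteq[B_{m+1},A_{m+1}]$ and we are squarely in case (1) of that formula, with sub-cases (a), (b), (c). The distinct-block assumption forces $A_{m+1}\geqslant A_m+1$ strictly. A direct reading of the formulas shows that in all three sub-cases one has $l'_m=l_m$ and $\eta'_m=(-1)^{A_{m+1}-B_{m+1}}\eta_m$, and consequently
\[
\eta'_{m+1}=\begin{cases}\eta_m & \text{in cases (a), (c),}\\ -\eta_m & \text{in case (b),}\end{cases}
\]
while the formulas for $l'_{m+1}$ specialize (using $B_{m+1}=B_m$) to
$l'_{m+1}=l_{m+1}-(A_m-B_m+1-2l_m)$ in (a), $l'_{m+1}=l_{m+1}+(A_m-B_m+1-2l_m)$ in (b), and $l'_{m+1}=A_{m+1}-A_m+2l_m-l_{m+1}$ in (c).

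I would then verify \eqref{eq: transmit} by a six-fold case split: three cases (a), (b), (c) crossed with the two subcases of \eqref{eq: transmit} itself (depending on whether $\eta'_{m+1}=(-1)^{A_{m-1}-B_{m-1}}\eta_{m-1}$ or not). In each crossing the sign relation between $\eta_m$ and $(-1)^{A_{m-1}-B_{m-1}}\eta_{m-1}$ is pinned down, so the original-order nonvanishing condition \eqref{eq: nonvanishing} for the pair $(m-1,m)$ supplies the corresponding inequality (either $A_m-l_m\geqslant A_{m-1}-l_{m-1}$ together with $B_m+l_m\geqslant B_{m-1}+l_{m-1}$, or $B_m+l_m>A_{m-1}-l_{m-1}$). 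Likewise the condition \eqref{eq: nonvanishing} for $(m,m+1)$ provides the relevant bound on the pair $(l_m,l_{m+1})$: $B_{m+1}+l_{m+1}>A_m-l_m$ in case (a) (yielding $l_{m+1}\geqslant A_m-l_m-B_m+1$ in integer arithmetic), and $A_{m+1}-l_{m+1}\geqslant A_m-l_m$, $l_{m+1}\geqslant l_m$ in cases (b) and (c).

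With these inputs, I would plug into the formulas above and simplify. In case (c) the identity $A_{m+1}-l'_{m+1}=A_m-2l_m+l_{m+1}$ and $B_{m+1}+l'_{m+1}=B_m+A_{m+1}-A_m+2l_m-l_{m+1}$ immediately give $A_{m+1}-l'_{m+1}\geqslant A_m-l_m$ and $B_{m+1}+l'_{m+1}\geqslant B_m+l_m$ from $l_m\leqslant l_{m+1}\leqslant A_{m+1}-A_m+l_m$, and then the $(m-1,m)$ condition closes each subcase. In case (a) the key observation is that $l_{m+1}\geqslant A_m-l_m-B_m+1$ forces $l'_{m+1}\geqslant l_m$, after which one checks $A_{m+1}-l'_{m+1}\geqslant B_m+l_m-1$ by combining the upper bound $l_{m+1}\leqslant[(A_{m+1}-B_{m+1}+1)/2]$ with $A_{m+1}\geqslant A_m+1$ and $2l_m\leqslant A_m-B_m+1$, and notes $B_{m+1}+l'_{m+1}\geqslant B_m+l_m$. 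Case (b) is handled similarly using $B_{m+1}+l'_{m+1}=A_m+l_{m+1}+1-2l_m\geqslant A_m-l_m+1$ together with the pointwise bound $A_m-l_m+1\geqslant B_m+l_m$ coming from $2l_m\leqslant A_m-B_m+1$.

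The only subtle step, and the main obstacle, is the careful accounting of strict versus non-strict inequalities in integer arithmetic (using $A_{m+1}\geqslant A_m+1$ from the distinct-blocks hypothesis and converting each $>$ arising from \eqref{eq: nonvanishing} into an integer improvement of $1$). This bookkeeping is what makes the subcases of case~(a) close; all other checks then reduce to substitution into the formulas for $l'_{m+1}$.
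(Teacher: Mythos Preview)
Your approach is essentially identical to the paper's: both arguments perform the same six-fold case split (the three sub-cases (a), (b), (c) of the change-of-order formula crossed with the two alternatives in \eqref{eq: transmit}), feeding in \eqref{eq: nonvanishing} for the pairs $(m-1,m)$ and $(m,m+1)$ and then simplifying. One minor caveat: the ``distinct-block'' assumption you invoke to force $A_{m+1}\geqslant A_m+1$ is not actually present in the setup (Jordan blocks form a multiset and may repeat), but this is harmless since only $A_{m+1}\geqslant A_m$ together with the range bounds $2l_i\leqslant A_i-B_i+1$ is ever needed.
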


\begin{proof}
We divide into three cases according to the change of order formula.
\begin{enumerate}

\item If $\eta_{m+1} \neq (-1)^{A_{m} - B_{m}} \eta_{m}$, then 
\[
\begin{cases}
\eta'_{m + 1} = \eta_{m} \\
l'_{m+1} = (B_{m} + l_{m}) - (A_{m} - l_{m}) + l_{m+1} - 1
\end{cases}
\]
We get
\begin{align*}
B_{m+1} + l'_{m+1} = (B_{m+1} + l_{m+1}) + (B_{m} + l_{m}) - (A_{m} - l_{m}) - 1 \\
A_{m+1} - l'_{m+1} = (A_{m+1} - l_{m+1}) + (A_{m} - l_{m}) - (B_{m} + l_{m}) + 1
\end{align*}
By \eqref{eq: nonvanishing}, we have 
\[
B_{m+1} + l_{m+1} > A_{m} - l_{m}.
\]

\begin{enumerate}

\item When $\eta_{m} \neq (-1)^{A_{m-1} - B_{m-1}} \eta_{m-1}$, then $\eta'_{m + 1} \neq (-1)^{A_{m-1} - B_{m-1}} \eta_{m-1}$. We need to show
\[
B_{m+1} + l'_{m+1} > A_{m-1} - l_{m-1}.
\]
By \eqref{eq: nonvanishing}, we have 
\[
B_{m} + l_{m} > A_{m-1} - l_{m-1}.
\]
Then
\[
B_{m+1} + l'_{m+1} \geqslant B_{m} + l_{m} > A_{m-1} - l_{m-1}.
\]

\item When $\eta_{m} = (-1)^{A_{m-1} - B_{m-1}} \eta_{m-1}$, then $\eta'_{m + 1} = (-1)^{A_{m-1} - B_{m-1}} \eta_{m-1}$. We need to show
\[
\begin{cases}
B_{m+1} + l'_{m+1}  \geqslant B_{m-1} + l_{m-1} \\
A_{m+1} - l'_{m+1}  \geqslant A_{m-1} - l_{m-1}
\end{cases}
\]
By \eqref{eq: nonvanishing}, we have 
\[
\begin{cases}
B_{m} + l_{m}  \geqslant B_{m-1} + l_{m-1} \\
A_{m} - l_{m}  \geqslant A_{m-1} - l_{m-1}
\end{cases}
\]
Then 
\begin{align*}
B_{m+1} + l'_{m+1} & \geqslant B_{m} + l_{m} \geqslant B_{m-1} + l_{m-1} \\
A_{m+1} - l'_{m+1} & \geqslant A_{m+1} - l_{m+1} \geqslant A_{m} - l_{m} \geqslant A_{m-1} - l_{m-1}
\end{align*}

\end{enumerate}

\item If $\eta_{m+1} = (-1)^{A_{m} - B_{m}} \eta_{m}$ and
\[
l_{m+1} - l_{m} < (A_{m+1} - B_{m+1})/2 - (A_{m} - B_{m}) + l_{m},
\] 
then 
\[
\begin{cases}
\eta'_{m + 1} \neq \eta_{m} \\
l'_{m+1} = (A_{m} - l_{m}) - (B_{m} + l_{m})  + l_{m+1} - 1
\end{cases}
\]
We get
\begin{align*}
B_{m+1} + l'_{m+1} = (B_{m+1} + l_{m+1}) - (B_{m} + l_{m}) + (A_{m} - l_{m}) + 1 \\
A_{m+1} - l'_{m+1} = (A_{m+1} - l_{m+1}) - (A_{m} - l_{m}) + (B_{m} + l_{m}) - 1
\end{align*}
By \eqref{eq: nonvanishing}, we have 
\[
\begin{cases}
B_{m+1} + l_{m+1}  \geqslant B_{m} + l_{m} \\
A_{m+1} - l_{m+1}  \geqslant A_{m} - l_{m}
\end{cases}
\]

\begin{enumerate}

\item When $\eta_{m} \neq (-1)^{A_{m-1} - B_{m-1}} \eta_{m-1}$, then $\eta'_{m + 1} = (-1)^{A_{m-1} - B_{m-1}} \eta_{m-1}$. We need to show
\[
\begin{cases}
B_{m+1} + l'_{m+1}  \geqslant B_{m-1} + l_{m-1} \\
A_{m+1} - l'_{m+1}  \geqslant A_{m-1} - l_{m-1}
\end{cases}
\]
By \eqref{eq: nonvanishing}, we have 
\[
B_{m} + l_{m} > A_{m-1} - l_{m-1}.
\]
Then
\begin{align*}
B_{m+1} + l'_{m+1} & \geqslant (A_{m} - l_{m}) + 1 \geqslant (A_{m-1} - l_{m-1}) + 1 \geqslant B_{m-1} + l_{m-1} \\
A_{m+1} - l'_{m+1}  & \geqslant (B_{m} + l_{m}) - 1 \geqslant  A_{m-1} - l_{m-1}
\end{align*}

\item When $\eta_{m} = (-1)^{A_{m-1} - B_{m-1}} \eta_{m-1}$, then $\eta'_{m + 1} \neq (-1)^{A_{m-1} - B_{m-1}} \eta_{m-1}$. We need to show
\[
B_{m+1} + l'_{m+1} > A_{m-1} - l_{m-1} 
\]
By \eqref{eq: nonvanishing}, we have 
\[
\begin{cases}
B_{m} + l_{m}  \geqslant B_{m-1} + l_{m-1} \\
A_{m} - l_{m}  \geqslant A_{m-1} - l_{m-1}
\end{cases}
\]
Then 
\[
B_{m+1} + l'_{m+1} \geqslant (A_{m} - l_{m}) + 1 \geqslant (A_{m-1} - l_{m-1}) + 1 > A_{m-1} - l_{m-1}
\]

\end{enumerate}

\item If $\eta_{m+1} = (-1)^{A_{m} - B_{m}} \eta_{m}$ and
\[
l_{m+1} - l_{m} \geqslant (A_{m+1} - B_{m+1})/2 - (A_{m} - B_{m}) + l_{m},
\] 
then 
\[
\begin{cases}
\eta'_{m + 1} = \eta_{m} \\
l'_{m+1} = (A_{m+1} - B_{m+1}) - l_{m+1} - (A_{m} - l_{m})  + (B_{m} + l_{m})
\end{cases}
\]
We get
\begin{align*}
B_{m+1} + l'_{m+1} = (A_{m+1} - l_{m+1}) - (A_{m} - l_{m}) + (B_{m} + l_{m}) \\
A_{m+1} - l'_{m+1} = (B_{m+1} + l_{m+1}) - (B_{m} + l_{m}) + (A_{m} - l_{m})
\end{align*}
By \eqref{eq: nonvanishing}, we have 
\[
\begin{cases}
B_{m+1} + l_{m+1}  \geqslant B_{m} + l_{m} \\
A_{m+1} - l_{m+1}  \geqslant A_{m} - l_{m}
\end{cases}
\]

\begin{enumerate}

\item When $\eta_{m} \neq (-1)^{A_{m-1} - B_{m-1}} \eta_{m-1}$, then $\eta'_{m + 1} \neq (-1)^{A_{m-1} - B_{m-1}} \eta_{m-1}$. We need to show
\[
B_{m+1} + l'_{m+1} > A_{m-1} - l_{m-1} 
\]
By \eqref{eq: nonvanishing}, we have 
\[
B_{m} + l_{m} > A_{m-1} - l_{m-1}.
\]
Then
\[
B_{m+1} + l'_{m+1} \geqslant B_{m} + l_{m} > A_{m-1} - l_{m-1}
\]

\item When $\eta_{m} = (-1)^{A_{m-1} - B_{m-1}} \eta_{m-1}$, then $\eta'_{m + 1} = (-1)^{A_{m-1} - B_{m-1}} \eta_{m-1}$. We need to show
\[
\begin{cases}
B_{m+1} + l'_{m+1} \geqslant B_{m-1} + l_{m-1} \\
A_{m+1} - l'_{m+1}  \geqslant A_{m-1} - l_{m-1}
\end{cases}
\]
By \eqref{eq: nonvanishing}, we have 
\[
\begin{cases}
B_{m} + l_{m}  \geqslant B_{m-1} + l_{m-1} \\
A_{m} - l_{m}  \geqslant A_{m-1} - l_{m-1}
\end{cases}
\]
Then
\begin{align*}
B_{m+1} + l'_{m+1} \geqslant B_{m} + l_{m} \geqslant B_{m-1} + l_{m-1} \\
A_{m+1} - l'_{m+1}  \geqslant A_{m} - l_{m} \geqslant A_{m-1} - l_{m-1}
\end{align*}

\end{enumerate}

\end{enumerate}

\end{proof}

More generally, we can drop the assumption \eqref{eq: fix rho}, but only assume $\q = \q_{p}$. Suppose for each $\rho$ appearing in $Jord(\q)$, we have the same setup as in Theorem~\ref{thm: nonvanishing}. Then we have

\begin{theorem}
\label{thm: nonvanishing 2}
$\r^{\Sigma_{0}}_{M, >_{\psi}}(\psi, \ul, \ueta) \neq 0$ if and only if the condition \eqref{eq: nonvanishing} is satisfied for each $\rho$.
\end{theorem}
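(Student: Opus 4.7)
The strategy is to reduce to Theorem~\ref{thm: nonvanishing} by exploiting the independence of Jordan blocks attached to different underlying supercuspidal representations $\rho$. Necessity of \eqref{eq: nonvanishing} for each $\rho$ is immediate: the statement of \cite[Lemma 5.5]{Xu:Comb} is local to two consecutive Jordan blocks sharing the same $\rho$, and its proof only invokes the nonvanishing of Jacquet functors of the form $\Jac_{\rho||^{x}}$, which is insensitive to Jordan blocks attached to other supercuspidals.

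For sufficiency I would mimic the induction in the proof of Theorem~\ref{thm: nonvanishing}, now carrying along all $\rho$'s simultaneously. The key observation making this work is that $\Jac_{\rho||^{x}}$ and $\Jac_{\rho'||^{x'}}$ commute whenever $\rho \not\cong \rho'$: they extract constituents of the Jacquet module separated by their cuspidal support on the $GL$-factor, so switching the order of two such functors does not change the result. In particular, the ``expand'' and ``pull'' operations of \cite[Section 7]{Xu:Comb} used in the proof of Theorem~\ref{thm: nonvanishing} act only on a single $\rho$-stratum and leave the others untouched, and the change-of-order formulas recalled in Section~\ref{sec: review} only mix blocks with the same $\rho$.

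Concretely, I would proceed by induction on the total number of Jordan blocks in $Jord(\psi)$. Fixing any $\rho$ appearing in $Jord(\psi)$, I would apply the expand/pull reductions from the proof of Theorem~\ref{thm: nonvanishing} to the last two $\rho$-blocks in $\cup_{i}\{(\rho, A_{i}, B_{i}, \zeta_{i})\}$, producing three auxiliary representations analogous to \eqref{eq: pull 1}, \eqref{eq: pull 2}, \eqref{eq: pull 3} (each still carrying Jordan blocks for every $\rho' \neq \rho$). Because all Jordan blocks attached to $\rho' \neq \rho$ automatically lie in the ``far-away'' complement of any single-$\rho$ analysis and automatically have discrete diagonal restriction with respect to the $\rho$-stratum, the hypotheses needed to invoke the induction hypothesis at each reduction step are preserved.

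The main obstacle is verifying the analogue of \eqref{eq: transmit} in the change-of-order step: one must check that \eqref{eq: nonvanishing} transmits correctly through the switch $(\ul, \ueta) \mapsto (\ul', \ueta')$ in the presence of additional Jordan blocks from other $\rho'$. Since \eqref{eq: transmit} and its proof only compare the quantities $l_{i}, \eta_{i}, A_{i}, B_{i}$ with indices sharing the same $\rho$, the verification carries over verbatim. Once this is established, iterating the induction across all $\rho$ yields the theorem.
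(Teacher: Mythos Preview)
Your approach is essentially the paper's own: run the expand/pull reductions of Theorem~\ref{thm: nonvanishing} one $\rho$ at a time, using that operations attached to distinct supercuspidals do not interact. The paper's proof is a two-line version of exactly this.

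One point you should make explicit: your induction does not terminate at a trivial case. After iterating the reductions across all $\rho$, you land in the situation where each $J_{\rho}$ has size $2$, but with several $\rho$'s present simultaneously. This is not covered by \cite[Proposition 5.2]{Xu:Comb} (the single-$\rho$ base case used in Theorem~\ref{thm: nonvanishing}); the paper instead invokes \cite[Proposition 5.3]{Xu:Comb}, which handles the multi-$\rho$ base case directly. Your induction variable ``total number of Jordan blocks in $Jord(\psi)$'' is also slightly off, since expand/pull do not decrease $|Jord(\psi)|$; what actually decreases is $\sum_{\rho} |J_{\rho}|$, the total size of the non-far-away part.
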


\begin{proof}
We can apply the arguments of the proof of Theorem~\ref{thm: nonvanishing} to each $\rho$ one by one, which reduces it to the case that $|J| = 2$ for each $\rho$. Then this case follows from \cite[Proposition 5.3]{Xu:Comb}.
\end{proof}

\section{Change sign}

In this appendix, we would like to extend \cite[Proposition 7.6]{Xu:Comb} as follows. Let $\q$ be an Arthur parameter of $G(F)$ such that $\q = \q_{p}$. We choose an admissible order $>_{\q}$ and fix an irreducible unitary supercuspidal representation $\rho$ of $GL(d_{\rho}, F)$. Let us index the Jordan blocks in $Jord_{\rho}(\q)$ such that 
\[
(\rho, A_{i+1}, B_{i+1}, \zeta_{i+1}) >_{\q} (\rho, A_{i}, B_{i}, \zeta_{i}).
\]
Suppose there exists $n$ such that for $i > n$, 
\[
(\rho, A_{i}, B_{i}, \zeta_{i}) \gg \cup_{j=1}^{n}\{(\rho, A_{j}, B_{j}, \zeta_{j})\}.
\]
Moreover, there exists $1 \leqslant m \leqslant n$ such that 
\[
A_m = \cdots = A_1 \geqslant A_i, \quad  B_m = \cdots = B_1 = 1/2, \quad \zeta_{m} = \cdots = \zeta_{1} \neq \zeta_{i}
\] 
for $m < i \leqslant n$. We introduce another parameter $\psi^{*}$ by changing $(\rho, A_{i}, B_{i}, \zeta_{i})$ to $(\rho, A_{i} + 1, B_{i}, -\zeta_{i})$ for $i \leqslant m$.
For any $(\ul, \ueta)$, such that 
\begin{align}
\label{eq: change sign 1}
\quad l_{i+1} = l_{i}, \quad \eta_{i+1} = (-1)^{A_{i} - \frac{1}{2}}\eta_{i} \quad \text{ for } i < m,
\end{align}
we can associate it with $(\ul^*, \ueta^*)$, defined as follows. For $i > m$,
\[
l^{*}_{i} = l_{i}, \quad \eta^{*}_{i} = \eta_{i}.
\]
For $i < m$,
\begin{align}
\label{eq: change sign 2}
l^{*}_{i+1} = l^{*}_{i}, \quad \eta^{*}_{i+1} = (-1)^{A_{i} + \frac{1}{2}} \eta^{*}_{i}.
\end{align}
Then it remains to specify $l^{*}_1, \eta^{*}_1$, which are given by
\[
\eta^{*}_{1} = -\eta_{1}, \quad l^{*}_{1} = \begin{cases} 
l_{1} + 1 & \text{ if } \eta_{1} = 1 \\
l_{1} & \text{ if } \eta_{1} = -1 \\
\end{cases}
\]
In case $l_{1} = (A_1 + \frac{1}{2})/2$, we fix $\eta_{1} = -1$.

\begin{proposition}
\label{prop: change sign}
$\pi^{\Sigma_{0}}_{M, >_{\psi}}(\psi, \ul, \ueta) \neq 0$ if and only if $\pi^{\Sigma_{0}}_{M, >_{\psi}}(\psi^{*}, \ul^{*}, \ueta^{*}) \neq 0$. Moreover, 
\begin{align}
\label{eq: change sign}
\r^{\Sigma_{0}}_{M, >_{\q}}(\q^{*}, \ul^{*}, \ueta^{*}) \hookrightarrow \times_{i = 1}^{m} \langle -\zeta_{i}1/2, \cdots, -\zeta_{i}(A_{i} + 1) \rangle \rtimes  \r^{\Sigma_{0}}_{M, >_{\q}}(\q, \ul, \ueta)      
\end{align}
\end{proposition}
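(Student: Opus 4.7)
The strategy is to induct on $m$, with the single-block case $m = 1$ supplied by \cite[Proposition 7.6]{Xu:Comb} and the inductive step handled by combining that base case with the change-of-order formula of Section~\ref{sec: review}. When $m = 1$ the conditions \eqref{eq: change sign 1} and \eqref{eq: change sign 2} become vacuous and the prescription for $(l_1^*, \eta_1^*)$ matches the output of the base case verbatim, so there is nothing to prove.

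For the inductive step from $m - 1$ to $m$, I first isolate block $i = 1$. Because blocks $1, \dots, m$ are identical as tuples $(A, B, \zeta)$, the admissibility of $>_{\q}$ is preserved under arbitrary permutation among them, so I may freely regard block $1$ as sitting at the bottom of the order and apply \cite[Proposition 7.6]{Xu:Comb} to it alone. This produces an auxiliary parameter $\q^{(1)}$ in which only block $1$ has been replaced by $(\rho, A_1 + 1, 1/2, -\zeta_1)$, a modified pair $(l_1^{(1)}, \eta_1^{(1)})$ following the recipe of the base case (namely $\eta_1 \mapsto -\eta_1$ with the asymmetric $l_1$-shift), and an embedding carrying the first Steinberg factor of \eqref{eq: change sign}.

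Next, I invoke the change-of-order formula of Section~\ref{sec: review} in case (2) (the $\zeta_1 \neq \zeta_2$ case) $m - 1$ times to transpose the newly sign-changed block past blocks $2, \dots, m$. At each transposition the $l$-values are preserved and the two involved $\eta$-coordinates are each multiplied by $(-1)^{A_j - B_j + 1} = -(-1)^{A_1 - 1/2}$. Under \eqref{eq: change sign 1}, a direct chase shows that these multiplications compose precisely to the pattern \eqref{eq: change sign 2} on positions $2, \dots, m$, together with the boundary value $\eta_1^* = -\eta_1$ and the prescribed $l_1^*$. After this repositioning, the remaining $m - 1$ blocks $(\rho, A_j, 1/2, \zeta_j)$, $j = 2, \dots, m$, satisfy the hypotheses of the proposition with $m - 1$ in place of $m$ and carry $\eta$-coordinates still obeying the analogue of \eqref{eq: change sign 1}. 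The induction hypothesis applies and produces the remaining $m - 1$ Steinberg factors in \eqref{eq: change sign}, along with the nonvanishing biconditional, completing the inductive step.

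The main obstacle I anticipate is the explicit bookkeeping of $(\ul, \ueta)$ through the composition of the base-case transformation and the subsequent $m - 1$ transpositions: the base case makes an $\eta_1$-flip and an $l_1$-shift that depends on the sign of $\eta_1$, while each transposition further modifies two $\eta$-coordinates. Verifying that the net effect reproduces exactly $\eta_1^* = -\eta_1$, the asymmetric $l_1^*$, and the alternating relation \eqref{eq: change sign 2} will require a careful factor-by-factor chain computation using \eqref{eq: change sign 1}. A secondary point to check is that only case (2) of the change-of-order formula is ever triggered during the repositioning, which follows because the sign-changed block $1$ retains its opposite sign from blocks $2, \dots, m$ throughout.
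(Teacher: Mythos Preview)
Your inductive scheme has a gap at the very first step. You propose to apply \cite[Proposition 7.6]{Xu:Comb} (the $m=1$ case of the present proposition) to block $1$ ``alone'' while blocks $2, \dots, m$ are still present with the \emph{same} sign $\zeta_1$. But the hypothesis of that proposition (read the current statement with $m=1$) requires that every block in positions $2, \dots, n$ carry the \emph{opposite} sign $-\zeta_1$; blocks $2, \dots, m$ violate this. So the base case does not apply in the configuration you describe, and no amount of reindexing among the identical blocks $1, \dots, m$ changes this.

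Even granting the first sign-change, the induction hypothesis would still fail. After replacing block $1$ by $(\rho, A_1+1, 1/2, -\zeta_1)$ and transposing it past blocks $2, \dots, m$, it lands among the blocks carrying $-\zeta_1$ but now has $A$-value $A_1+1 > A_j = A_1$ for $2 \leqslant j \leqslant m$. This violates the hypothesis $A_{m-1} = \cdots = A_1 \geqslant A_i$ (with $m-1$ in place of $m$) that the proposition imposes on the opposite-sign blocks, so you cannot invoke the proposition inductively on blocks $2, \dots, m$.

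The paper avoids both obstacles by a different mechanism: it first reduces to $m = n$, then in the inductive step \emph{shifts} the top block of $\psi^{*}$ far away (producing a dominating parameter $\psi^{*}_{>}$) before applying the induction hypothesis to the remaining $n-1$ blocks. The single-block result \cite[Proposition 7.6]{Xu:Comb} is only invoked after a further dominating-parameter argument that pushes the other $n-1$ blocks far away as well, so that at the moment of application there is genuinely only one block in the close region. These shifting moves, absent from your plan, are precisely what make the hypotheses of both the base case and the induction hypothesis hold.
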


\begin{remark}
\cite[Proposition 7.6]{Xu:Comb} settles the case when $m = 1$.
\end{remark}

\begin{proof}
As in the proof of \cite[Proposition 7.6]{Xu:Comb}, we can reduce it to the case that 
\[
m = n \text{ and $Jord(\q) \backslash \cup_{i=1}^{n}\{(\rho, A_{i}, B_{i}, \zeta_{i})\}$ has discrete diagonal restriction}.
\]
Because of the conditions \eqref{eq: change sign 1} and \eqref{eq: change sign 2}, we have
\[
\pi^{\Sigma_{0}}_{M, >_{\psi}}(\psi, \ul, \ueta) \neq 0 \text{ and } \pi^{\Sigma_{0}}_{M, >_{\psi}}(\psi^{*}, \ul^{*}, \ueta^{*}) \neq 0
\] 
by Theorem~\ref{thm: nonvanishing 2}. So we only need to show \eqref{eq: change sign}, and we will proceed by induction on $n$. 

Let $\psi^{*}_{>}$ be obtained from $\psi^{*}$ by changing $(\rho, A_{n} + 1, 1/2, -\zeta_{n})$ to $(\rho, A_{n} + 1 + T_{n}, 1/2 + T_{n}, -\zeta_{n})$ for $T_{n}$ sufficiently large. Then by our induction assumption, we have
\[
\pi^{\Sigma_{0}}_{M, >_{\psi}}(\psi^{*}_{>}, \ul^{*}, \ueta^{*}) \hookrightarrow \times_{i = 1}^{n-1} \langle -\zeta_{i}1/2, \cdots, -\zeta_{i}(A_{i} + 1) \rangle \rtimes \pi^{\Sigma_{0}}_{M, >_{\psi}}(\psi^{(n)}_{>}, \ul^{(n)}, \ueta^{(n)})
\]
where $\psi^{(n)}_{>}$ is obtained from $\psi^{*}_{>}$ by changing $(\rho, A_{i} + 1, 1/2, -\zeta_{i})$ back to $(\rho, A_{i}, 1/2, \zeta_{i})$ for $1 \leqslant i < n$. Moreover, 
\[
l^{(n)}_{i} = l_{i}, \quad \eta^{(n)}_{i} = \eta_{i} \quad \text{ for } i < n,
\]
and 
\[
l^{(n)}_{i} = l^{*}_{i}, \quad \eta^{(n)}_{i} = \eta^{*}_{i} \quad \text{ for } i \geqslant n.
\]
Then we claim 
\begin{align}
\label{eq: change sign 4}
\pi^{\Sigma_{0}}_{M, >_{\psi}}(\psi^{(n)}_{>}, \ul^{(n)}, \ueta^{(n)}) \hookrightarrow \underbrace{\begin{pmatrix}
              -\zeta_{n}(1/2 + T_{n}) & \cdots & -\zeta_{n}1/2 \\
              \vdots &  & \vdots \\
              -\zeta_{n}(A_{n} + 1 + T_{n}) & \cdots & -\zeta_{n}(A_{n} + 1)
       \end{pmatrix}}_{\mathcal{C}_{X_{n}}} \rtimes \pi^{\Sigma_{0}}_{M, >_{\psi}}(\psi, \ul, \ueta).
\end{align}
where
\[
X_{n} := \begin{bmatrix}
              -\zeta_{n}(1/2 + T_{n}) & \cdots & -\zeta_{n}1/2 \\
              \vdots &  & \vdots \\
              -\zeta_{n}(A_{n} + 1 + T_{n}) & \cdots & -\zeta_{n}(A_{n} + 1)
       \end{bmatrix}
\]
If this is the case, then
\begin{align*}
\pi^{\Sigma_{0}}_{>_{\psi}}(\psi^{*}_{>}, \ul^{*}, \ueta^{*})  & \hookrightarrow  \times_{i = l}^{n-1} \langle -\zeta_{i}1/2, \cdots, -\zeta_{i}(A_{i} + 1) \rangle \times \mathcal{C}_{X_{n}} \rtimes \pi^{\Sigma_{0}}_{M, >_{\psi}}(\psi, \ul, \ueta) \\
& \cong \mathcal{C}_{X_{n}} \times \times_{i = l}^{n-1} \langle -\zeta_{i}1/2, \cdots, -\zeta_{i}(A_{i} + 1) \rangle \rtimes \pi^{\Sigma_{0}}_{M, >_{\psi}}(\psi, \ul, \ueta),
\end{align*}
from which \eqref{eq: change sign} follows.

We still need to show the claim \eqref{eq: change sign 4}. Let $\psi^{(n)}$ be obtained from $\psi^{(n)}_{>}$ by moving $(\rho, A_{n} + 1 + T_{n}, 1/2 + T_{n}, -\zeta_{n})$ back to $(\rho, A_{n} + 1, 1/2, -\zeta_{n})$. Suppose $\pi^{\Sigma_{0}}_{M, >_{\psi}}(\psi^{(n)}, \ul^{(n)}, \ueta^{(n)}) \neq 0$, then 
\begin{align}
\label{eq: change sign 3}
\pi^{\Sigma_{0}}_{M, >_{\psi}}(\psi^{(n)}_{>}, \ul^{(n)}, \ueta^{(n)}) \hookrightarrow \begin{pmatrix}
              -\zeta_{n}(1/2 + T_{n}) & \cdots & -\zeta_{n}3/2 \\
              \vdots &  & \vdots \\
              -\zeta_{n}(A_{n} + 1 + T_{n}) & \cdots & -\zeta_{n}(A_{n} + 2)
       \end{pmatrix} \rtimes \pi^{\Sigma_{0}}_{M, >_{\psi}}(\psi^{(n)}, \ul^{(n)}, \ueta^{(n)}).
\end{align}
To show the nonvanishing of $\pi^{\Sigma_{0}}_{M, >_{\psi}}(\psi^{(n)}, \ul^{(n)}, \ueta^{(n)})$, we need to switch to a new order $>'_{\psi}$ by moving $(\rho, A_{n} + 1, 1/2, -\zeta_{n})$ to the last position. Then
\[
\pi^{\Sigma_{0}}_{M, >_{\psi}}(\psi^{(n)}, \ul^{(n)}, \ueta^{(n)}) = \pi^{\Sigma_{0}}_{M, >'_{\psi}}(\psi^{(n)}, \ul^{'(n)}, \ueta^{'(n)}),
\]
where 
\[
l^{'(n)}_{i} = l_{i}^{(n)}, \quad \eta^{'(n)}_{i} = \eta^{(n)}_{i} \quad \text{ for } i > n,
\]
and
\[
l^{'(n)}_{i} = l_{i}^{(n)}, \quad \eta^{'(n)}_{i} = (-1)^{A_{n} - 1/2}\eta^{(n)}_{i} \quad \text{ for } i < n,
\]
and
\[
l^{'(n)}_{n} = l^{*}_{1}, \quad \eta^{'(n)}_{n} = \eta^{*}_{1}. 
\]
Let $\psi^{(n)}_{\gg}$ be a dominating parameter  for $\psi^{(n)}$ with respect to $>'_{\psi}$, obtained by changing $(\rho, A_{i}, B_{i}, \zeta_{i})$ to $(\rho, A_{i} + T_{i}, B_{i} + T_{i}, \zeta_{i})$ for $i < n$. We also require that $\psi^{(n)}_{\gg}$ has discrete diagonal restriction. Then by \cite[Proposition 7.6]{Xu:Comb},
\[
\pi^{\Sigma_{0}}_{M, >'_{\psi}}(\psi^{(n)}_{\gg}, \ul^{'(n)}, \ueta^{'(n)}) \hookrightarrow \langle -\zeta_{n}1/2, \cdots, -\zeta_{n}(A_{n} + 1) \rangle \rtimes \pi^{\Sigma_{0}}_{M, >'_{\psi}}(\psi_{\gg}, \ul', \ueta'),
\]
where $\psi_{\gg}$ is obtained from $\psi^{(n)}_{\gg}$ by changing $(\rho, A_{n} + 1, 1/2, -\zeta_{n})$ back to $(\rho, A_{n}, 1/2, \zeta_{n})$. Note 
\[
l'_{i} = l^{'(n)}_{i}, \quad \eta'_{i} = \eta^{'(n)}_{i}  \quad \text{ for } i \neq n,
\]
and
\[
l'_{n} = l_{1}, \quad \eta'_{n} = \eta_{1}.
\]
It is easy to check by the change of order formula that
\[
\pi^{\Sigma_{0}}_{M, >_{\psi}}(\psi, \ul, \ueta) = \pi^{\Sigma_{0}}_{M, >'_{\psi}}(\psi, \ul', \ueta').
\]
In particular, the right hand side is nonzero. Therefore,
\[
\pi^{\Sigma_{0}}_{M, >'_{\psi}}(\psi_{\gg}, \ul', \ueta') \hookrightarrow \times_{i = 1}^{n-1} \begin{pmatrix}
              \zeta_{i}(1/2 + T_{i}) & \cdots & \zeta_{i}3/2 \\
              \vdots &  & \vdots \\
              \zeta_{i}(A_{i} + T_{i}) & \cdots & \zeta_{i}(A_{i} + 1)
       \end{pmatrix} \rtimes \pi^{\Sigma_{0}}_{M, >'_{\psi}}(\psi, \ul', \ueta')
\]
Combined with the previous inclusion, we get
\begin{align*}
\pi^{\Sigma_{0}}_{M, >'_{\psi}}(\psi^{(n)}_{\gg}, \ul^{'(n)}, \ueta^{'(n)}) & \hookrightarrow \langle -\zeta_{n}1/2, \cdots, -\zeta_{n}(A_{n} + 1) \rangle  \times \\
&  \times_{i = 1}^{n-1} \begin{pmatrix}
              \zeta_{i}(1/2 + T_{i}) & \cdots & \zeta_{i}3/2 \\
              \vdots &  & \vdots \\
              \zeta_{i}(A_{i} + T_{i}) & \cdots & \zeta_{i}(A_{i} + 1)
       \end{pmatrix} \rtimes \pi^{\Sigma_{0}}_{M, >'_{\psi}}(\psi, \ul', \ueta') \\
& \cong \times_{i = 1}^{n-1} \begin{pmatrix}
              \zeta_{i}(1/2 + T_{i}) & \cdots & \zeta_{i}3/2 \\
              \vdots &  & \vdots \\
              \zeta_{i}(A_{i} + T_{i}) & \cdots & \zeta_{i}(A_{i} + 1)
       \end{pmatrix}  \\
& \times \langle -\zeta_{n}1/2, \cdots, -\zeta_{n}(A_{n} + 1) \rangle \rtimes \pi^{\Sigma_{0}}_{M, >'_{\psi}}(\psi, \ul', \ueta') 
\end{align*}
Consequently, $\pi^{\Sigma_{0}}_{M, >'_{\psi}}(\psi^{(n)}, \ul^{'(n)}, \ueta^{'(n)}) \neq 0$ and 
\[
\pi^{\Sigma_{0}}_{M, >'_{\psi}}(\psi^{(n)}, \ul^{'(n)}, \ueta^{'(n)}) \hookrightarrow \langle -\zeta_{n}1/2, \cdots, -\zeta_{n}(A_{n} + 1) \rangle  \rtimes \pi^{\Sigma_{0}}_{M, >'_{\psi}}(\psi, \ul', \ueta').
\]
Substitute the above expression into \eqref{eq: change sign 3}, we obtain
\begin{align*}
\pi^{\Sigma_{0}}_{M, >_{\psi}}(\psi^{(n)}_{>}, \ul^{(n)}, \ueta^{(n)}) & \hookrightarrow \begin{pmatrix}
              -\zeta_{n}(1/2 + T_{n}) & \cdots & -\zeta_{n}3/2 \\
              \vdots &  & \vdots \\
              -\zeta_{n}(A_{n} + 1 + T_{n}) & \cdots & -\zeta_{n}(A_{n} + 2)
       \end{pmatrix} \\
& \times \langle -\zeta_{n}1/2, \cdots, -\zeta_{n}(A_{n} + 1) \rangle  \rtimes \pi^{\Sigma_{0}}_{M, >_{\psi}}(\psi, \ul, \ueta).
\end{align*}
Note the Jordan blocks in $Jord_{\rho}(\psi)$ satisfies $A_{i} < A_{n} + 1$ for $i \leqslant n$, and $B_{i} > A_{n} + 1 + T_{n}$ for $i > n$. If we apply $\Jac_{X_{n}}$ to the right hand side of the above expression, we can only get $\pi^{\Sigma_{0}}_{M, >_{\psi}}(\psi, \ul, \ueta)$. This means the left hand side is the unique irreducible subrepresentation of the right hand side. Therefore, 
\[
\pi^{\Sigma_{0}}_{M, >_{\psi}}(\psi^{(n)}_{>}, \ul^{(n)}, \ueta^{(n)}) \hookrightarrow \mathcal{C}_{X_{n}} \rtimes \pi^{\Sigma_{0}}_{M, >_{\psi}}(\psi, \ul, \ueta),
\]
which is exactly \eqref{eq: change sign 4}. This finishes our proof.

\end{proof}

\bibliographystyle{amsalpha}

\bibliography{reps}

\providecommand{\bysame}{\leavevmode\hbox to3em{\hrulefill}\thinspace}
\providecommand{\MR}{\relax\ifhmode\unskip\space\fi MR }
\providecommand{\MRhref}[2]{%
  \href{http://www.ams.org/mathscinet-getitem?mr=#1}{#2}
}
\providecommand{\href}[2]{#2}
\begin{thebibliography}{M{\oe}g11}

\bibitem[Art13]{Arthur:2013}
J.~Arthur, \emph{The endoscopic classification of representations: orthogonal
  and symplectic groups}, Colloquium Publications, vol.~61, American
  Mathematical Society, 2013.

\bibitem[Hen00]{Henniart:2000}
G.~Henniart, \emph{Une preuve simple des conjectures de {L}anglands pour {${\rm
  GL}(n)$} sur un corps {$p$}-adique}, Invent. Math. \textbf{139} (2000),
  no.~2, 439--455.

\bibitem[HT01]{HarrisTaylor:2001}
M.~Harris and R.~Taylor, \emph{The geometry and cohomology of some simple
  {S}himura varieties}, Annals of Mathematics Studies, vol. 151, Princeton
  University Press, Princeton, NJ, 2001, With an appendix by Vladimir G.
  Berkovich.

\bibitem[Lan89]{Langlands:1989}
R.~P. Langlands, \emph{On the classification of irreducible representations of
  real algebraic groups}, Representation theory and harmonic analysis on
  semisimple {L}ie groups, Math. Surveys Monogr., vol.~31, Amer. Math. Soc.,
  Providence, RI, 1989, pp.~101--170.

\bibitem[M{\oe}g06]{Moeglin1:2006}
C.~M{\oe}glin, \emph{Paquets d'arthur pour les groupes classiques; point de vue
  combinatoire}, Preprint, arXiv:math/0610189 (2006).

\bibitem[M{\oe}g09]{Moeglin:2009}
\bysame, \emph{Paquets d'{A}rthur discrets pour un groupe classique
  {$p$}-adique}, Automorphic forms and {$L$}-functions {II}. {L}ocal aspects,
  Contemp. Math., vol. 489, Amer. Math. Soc., Providence, RI, 2009,
  pp.~179--257.

\bibitem[M{\oe}g11]{Moeglin1:2011}
\bysame, \emph{Multiplicit\'e 1 dans les paquets d'{A}rthur aux places
  {$p$}-adiques}, On certain {$L$}-functions, Clay Math. Proc., vol.~13, Amer.
  Math. Soc., Providence, RI, 2011, pp.~333--374.

\bibitem[Sch13]{Scholze:2013}
P.~Scholze, \emph{The local {L}anglands correspondence for {${GL}_n$} over
  {$p$}-adic fields}, Invent. Math. \textbf{192} (2013), no.~3, 663--715.

\bibitem[Xu17]{Xu:Apacket}
B.~Xu, \emph{On {M}{\oe}glin's parametrization of {A}rthur packets for p-adic
  quasisplit {$Sp(N)$} and {$SO(N)$}}, Can. J. Math. \textbf{69} (2017), no.~4,
  890 -- 960.

\bibitem[Xu19]{Xu:Comb}
\bysame, \emph{A combinatorial solution to {M}{\oe}glin's parametrization of
  {A}rthur packets for p-adic quasisplit {$Sp(N)$} and {$O(N)$}}, J. Inst.
  Math. Jussieu. (2019), 1 -- 114.

\end{thebibliography}

\end{document}